\newcommand{\vt}{\vartheta}
\renewcommand{\L}{\mathcal{L}}
\renewcommand{\phi}{\varphi}
\renewcommand{\div}{\text{div}}
\newcommand{\one}{\mathds{1}}
\newcommand{\wO}{\widetilde{\Omega}}
\newcommand{\wc}{\widetilde{c}}
\newcommand{\R}{\mathbb{R}}
\newcommand{\Z}{\mathbb{Z}}
\newcommand{\e}{\mathbf{e}}
\renewcommand{\P}{\mathbb{P}}
\newcommand{\F}{\mathcal{F}}
\newcommand{\E}{\mathbb{E}}
\renewcommand{\bar}[1]{\overline{#1}}
\renewcommand{\tilde}[1]{\widetilde{#1}}
\numberwithin{equation}{section}
\pgfplotsset{compat=newest}
\providecommand{\keywords}[1]{\textbf{\textit{\noindent  Keywords and phrases. }} #1}
\providecommand{\subjclass}[1]{\textbf{\textit{\noindent Mathematics Subject Classification. }} #1}
\newmdenv[
roundcorner=10pt,
backgroundcolor=gray!10,
linecolor=gray!10,
tikzsetting={draw=black,line width=3pt,dashed,dash pattern= on 10pt off 3pt},
outerlinewidth=0pt,
innerlinewidth=0pt,
%middlelinewidth=0pt
]{figbox}
\newlength\figureheight
\newlength\figurewidth
\DeclareMathOperator*{\argmin}{\mathrm{argmin}}
\def\dd{\mathrm{d}}
\newcommand{\dist}{\mathrm{dist}}
\newcommand{\Div}{\mathrm{div}}
\newcommand{\Id}{\mathrm{Id}}
\newcommand{\iid}{\stackrel{\mathrm{iid}}{\sim}}
\newcommand{\Lip}{\mathrm{Lip}}
\newcommand{\Vol}{\mathrm{Vol}}
\def\eps{\varepsilon}
\DeclareMathOperator*{\Glim}{\Gamma\text{-}\lim}
\DeclareMathOperator*{\weakstarto}{\stackrel{*}{\rightharpoonup}}
\def\l{\left(}
\def\r{\right)}
\def\la{\left|}
\def\ra{\right|}
\def\lda{\left\|}
\def\rda{\right\|}
\def\lb{\left\{}
\def\rb{\right\}}
\def\rd{\right.}
\def\ls{\left[}
\def\rs{\right]}
\def\XXint#1#2#3{{\setbox0=\hbox{$#1{#2#3}{\int}$ }
\vcenter{\hbox{$#2#3$ }}\kern-.6\wd0}}
\newcommand{\bbE}{\mathbb{E}}
\newcommand{\bbN}{\mathbb{N}}
\newcommand{\bbP}{\mathbb{P}}
\newcommand{\bbR}{\mathbb{R}}
\newcommand{\cE}{\mathcal{E}}
\newcommand{\cL}{\mathcal{L}}
\newcommand{\cP}{\mathcal{P}}
\newcommand{\cX}{\mathcal{X}}
\newcommand{\numtype}{section}
\newtheorem{theorem}{Theorem}[\numtype]
\newtheorem{lemma}[theorem]{Lemma}
\newtheorem{proposition}[theorem]{Proposition}
\newtheorem{corollary}[theorem]{Corollary}
\newtheorem{mydef}[theorem]{Definition}
\theoremstyle{definition}
\newtheorem{remark}[theorem]{Remark}
\newcommand{\leqnomode}{\tagsleft@true}
\newcommand{\reqnomode}{\tagsleft@false}
\newcounter{num}
\newcommand{\dWinfty}{\mathrm{d}_{\mathrm{W}^{\infty}}}
\newcommand{\ty}{\tilde{y}}
\newcommand{\cEn}{\cE_{n}} %!
\newcommand{\cEpn}{\cEn^{(p)}} %!
\newcommand{\cEncon}{\cE_{n,\mathrm{con}}} %!
\newcommand{\cEpncon}{\cEncon^{(p)}} %!
\newcommand{\cEinftycon}{\cE_{\infty,\mathrm{con}}} %!
\newcommand{\cEpinftycon}{\cEinftycon^{(p)}} %!
\newcommand{\cEpenneps}{\cE^{(\mathrm{pen})}_{n,\eps}}
\newcommand{\cEpneps}{\cE_{n,\eps}^{(p)}}
\newcommand{\cEpnepsn}{\cE_{n,\eps_n}^{(p)}}
\newcommand{\cEpinfty}{\cE^{(p)}_\infty}
\newcommand{\cEppenneps}{\cE^{(p,\mathrm{pen})}_{n,\eps}}
\newcommand{\cEppennepsn}{\cE^{(p,\mathrm{pen})}_{n,\eps_n}}
\newcommand{\cEpconneps}{\cE^{(p,\mathrm{con})}_{n,\eps}}
\newcommand{\cEpconnepsn}{\cE^{(p,\mathrm{con})}_{n,\eps_n}}
\def\cENLp#1{\cE^{(p,\mathrm{NL})}_{#1}}
\newcommand{\TL}{\mathrm{TL}}
\newcommand{\TLp}{\TL^p}
\newcommand{\dTLp}{d_{\TLp}}
\newcommand{\toTLp}{\stackrel{\TLp}{\to}}
\newcommand{\rmL}{\mathrm{L}}
\newcommand{\Lp}{\rmL^p}
\newcommand{\Lzero}{\rmL^0}
\newcommand{\Lone}{\rmL^1}
\newcommand{\Ltwo}{\rmL^2}
\newcommand{\Linfty}{\rmL^\infty}
\def\Ck#1{\mathrm{C}^{#1}}
\def\Wkp#1#2{\mathrm{W}^{#1,#2}}
\title{Rates of Convergence for Laplacian Semi-Supervised Learning with Low Labeling Rates}
\author{Jeff Calder\thanks{School of Mathematics, University of Minnesota (\url{jwcalder@umn.edu})}, Dejan Slep\v{c}ev\thanks{Department of Mathematical Sciences, Carnegie Mellon University (\url{slepcev@math.cmu.edu})}, and Matthew Thorpe\thanks{Department of Mathematics, University of Manchester, (\url{matthew.thorpe-2@manchester.ac.uk})}}
\date{June 2020}
\begin{document}

\maketitle

\begin{abstract}
We study graph-based Laplacian semi-supervised learning at low labeling rates. Laplacian learning uses harmonic extension on a graph to propagate labels. At very low label rates, Laplacian learning becomes degenerate and the solution is roughly constant with spikes at each labeled data point. Previous work has shown that this degeneracy occurs when the number of labeled data points is finite while the number of unlabeled data points tends to infinity. In this work we allow the number of labeled data points to grow to infinity with the number of labels. Our results show that for a random geometric graph with length scale $\eps>0$ and labeling rate $\beta>0$, if $\beta \ll\eps^2$ then the solution becomes degenerate and spikes form, and if $\beta\gg \eps^2$ then  Laplacian learning is well-posed and consistent with a continuum Laplace equation. Furthermore, in the well-posed setting we prove quantitative error estimates of $O(\eps\beta^{-1/2})$ for the difference between the solutions of the discrete problem and continuum PDE, up to logarithmic factors. We also study $p$-Laplacian regularization and show the same degeneracy result when $\beta \ll \eps^p$. The proofs of our well-posedness results use the random walk interpretation of Laplacian learning and PDE arguments, while the proofs of the ill-posedness results use $\Gamma$-convergence tools from the calculus of variations.  We also present numerical results on synthetic and real data to illustrate our results.
\end{abstract}

\noindent
\keywords{semi-supervised learning, regression, asymptotic consistency, Gamma-convergence, PDEs on graphs, nonlocal variational problems, random walks on graphs}

\noindent
\subjclass{49J55, 49J45, 62G20, 35J20, 60G50, 60J20}
% 	49J55  	Problems involving randomness [See also 93E20]
% 	49J45  	Methods involving semicontinuity and convergence; relaxation
% 	62G20  	Nonparametric inference: Asymptotic properties
%  	35J20   Variational methods for second-order elliptic equations
%   60G50  	Probability theory and stochastic processes: Sums of independent random variables; random walks
%   60J20  	Probability theory and stochastic processes: Applications of Markov chains and discrete-time Markov processes on general state spaces (social mobility, learning theory, industrial processes, etc.) [See also 90B30, 91D10, 91D35, 91E40]

\section{Introduction} \label{sec:Intro}

Semi-supervised learning algorithms use both labeled and unlabeled data in learning tasks. In many situations
the labeling rate (i.e. the ratio between the number of labeled and the total number of data points) is low. The aim of
 semi-supervised learning is to utilize the geometric or topological properties of unlabeled data to improve the performance of classification or regression algorithms.
Given the ubiquity of data in the modern world, much of which is \emph{unlabeled}, and the relative expense of labeling data, the problem of semi-supervised learning has attracted significant interest. 

A typical semi-supervised learning problem is posed as follows: Given a data set $\Omega_n = \{x_i\}_{i=1}^n\subset \bbR^d$, and labels $\{y_i\}_{i\in Z_n}\subset \bbR$ for $\Gamma_n=\{x_i\}_{i\in Z_n}$ for a subset $Z_n\subset \{1,\dots, n\}$ of the dataset, learn a function $u:\Omega_n\to \bbR$ that extends the labels to the whole dataset $\Omega_n$ in some meaningful way.
There are many possible functions $u:\Omega_n\to \bbR$ that interpolate the labeled data $\{(x_i,y_i)\}_{i\in Z_n}$, the \emph{semi-supervised} smoothness assumption stipulates that the learned labels should vary smoothly in high density regions of the data \cite{ssl}.
Different mathematical interpretations of the semi-supervised smoothness assumption lead to different algorithms.
To encode the geometry of the data
distribution and measure the smoothness of the functions on the data, it is common  to construct a graph over the data $\Omega_n$. We consider weighted graphs with 
 edge-weight matrix $W=(w_{ij})_{i,j=1}^n$, where $w_{ij}\geq 0$ and the edge weights are symmetric: $w_{ij}=w_{ji}$, for all $i,j$.
If $w_{ij}=0$ then we say there is no edge between $i$ and $j$.
Two standard graph constructions are kNN graphs, where each data point to its $k$ nearest neighbors, or 
random geometric graphs where  all pairs of points that are within some specified distance $\eps>0$ are connected. 
Here we take the weighted variant of the  latter approach and assume $w_{ij} = \eta_\eps(|x_i-x_j|)$ for some kernel $\eta_\eps$.

This paper is concerned with Laplacian semi-supervised learning, originally proposed in \cite{zhu03}, which is a graph-based learning algorithm that propagates labels by solving the graph Laplace equation
\begin{equation}\label{eq:GLE}
\left\{\begin{aligned}
\L_{n,\eps} u(x_i) &=0, &&\text{if } x_i\not\in \Gamma_n\\ 
u(x_i) &=y_i,&&\text{if } x_i \in \Gamma_n,
\end{aligned}\right.
\end{equation}
where $\L_{n,\eps}$ is the graph Laplacian given by
\begin{equation}\label{eq:GL_intro}
\L_{n,\eps} u(x_i) = \sum_{j=1}^nw_{ij}(u(x_i) - u(x_j)).
\end{equation}
When the graph is connected, the solution of \eqref{eq:GLE} is unique.
Laplace learning has been widely used since its introduction in \cite{zhu03}, and several authors have considered other normalizations of the graph Laplacian~\cite{zhou2004semi,zhou2005learning,zhou2004learning} (the graph Laplacian defined in~\eqref{eq:GL_intro} is   called the unnormalized graph Laplacian), and soft label constraints~\cite{belkin2006manifold,belkin2004semi} (where labels are penalized rather than enforced).
Laplace learning can also be formulated in terms of random walks on graphs.
Let $X_0,X_1,X_2,\dots,$ be a random walk on $\Omega_n$ with transition probabilities $p_{ij} = w_{ij}/d_i$ of transitioning from $i$ to $j$, where $d_i=\sum_{j=1}^n w_{ij}$.
Then, the \emph{random walk} Laplacian $\L_r$, defined by  $\L_r u(x_i) = \frac{1}{d_i}\L_{n,\eps} u(x_i)$, is the generator for the random walk, and we have the representation formula
\begin{equation}\label{eq:RepForm}
u(x_i) = \E[y_{\tau} \, | \, X_0=x_i]
\end{equation}
for the solution $u$ of \eqref{eq:GLE}, where  $\tau\in Z_n$ is the index of the label point first reached by the walk. In this sense, we can think of Laplacian learning as computing $u(x_i)$ by taking a weighted average of labels that are \emph{close to} $x_i$ in the metric inherited from the random walk. Finally, Laplacian learning can also be viewed as an optimization, or variational, problem. The solution $u$ of \eqref{eq:GLE} is the unique minimizer of the graph Dirichlet energy
\begin{equation}\label{eq:GDir}
\cE^{(2)}_{n,\eps}(u) = \frac{1}{n^2\eps^2} \sum_{i,j=1}^n w_{ij}(u(x_i)-u(x_j))^2,
\end{equation}
subject to the constraints $u(x_i)=y_i$ for all $x_i \in \Gamma_n$. 

When the label rate $\beta :=\frac{1}{n}|Z_n|$ is very low, Laplacian learning becomes degenerate and the label function $u$ becomes nearly constant with sharp spikes at the labeled points~\cite{nadler09,elalaoui16}. See Figure \ref{fig:spikes} for a simulation showing the formation of spikes at low label rates. This degeneracy can be explained easily both from the variational and the random walk perspectives. Indeed, from the random walk perspective, if the label rate is very low then the random walk will explore the graph for too long before hitting a label, and the distribution of the walker will approach the invariant distribution \emph{before} hitting a label. The invariant distribution for the walker is $\pi(x_i) = \frac{d_i}{\sum_{j=1}^n d_j}$, and so by \eqref{eq:RepForm} we see that for very low label rates we have
\[ u(x_i) =\E[y_{\tau} \, | \, X_0=x_i]  \approx \frac{\sum_{j\in Z_n} d_jy_j }{\sum_{j\in Z_n} d_j}. \]
Thus, when the label rate is low, $u(x_i)$ is approximately the average of all labels weighted only by degree. Since the labels are imposed as hard constraints, the function $u$ must attain the labels, and it does so with spikes in a localized neighborhood of each label. This degeneracy leads to very poor results for classification at low label rates, since most datapoints get assigned to the same class \cite{flores2019algorithms,shi17}.

\begin{figure}[t!]
\centering
\includegraphics[trim=350 200 320 180, clip=true,width=0.23\textheight]{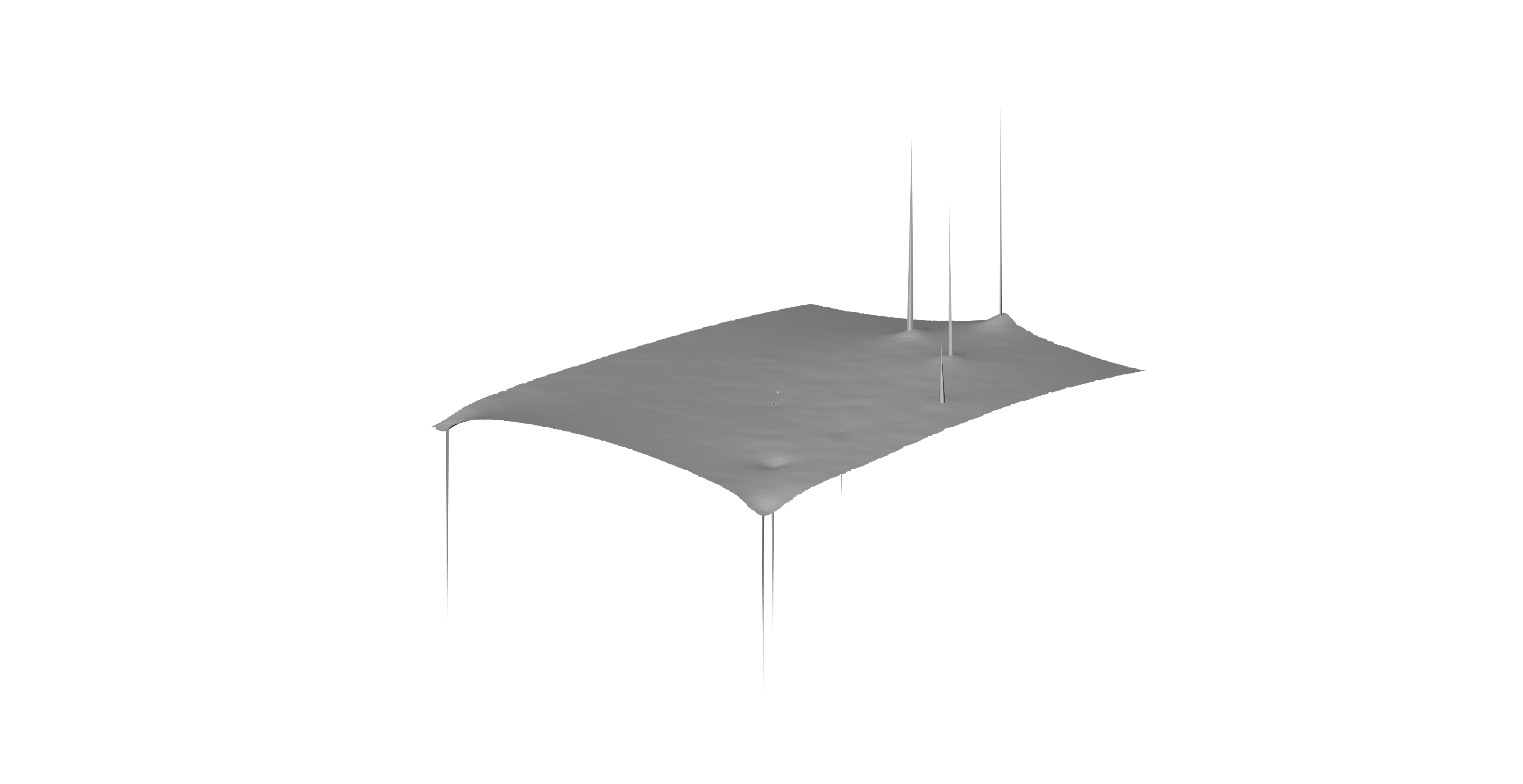}
\includegraphics[trim=350 130 320 120, clip=true,width=0.2\textheight]{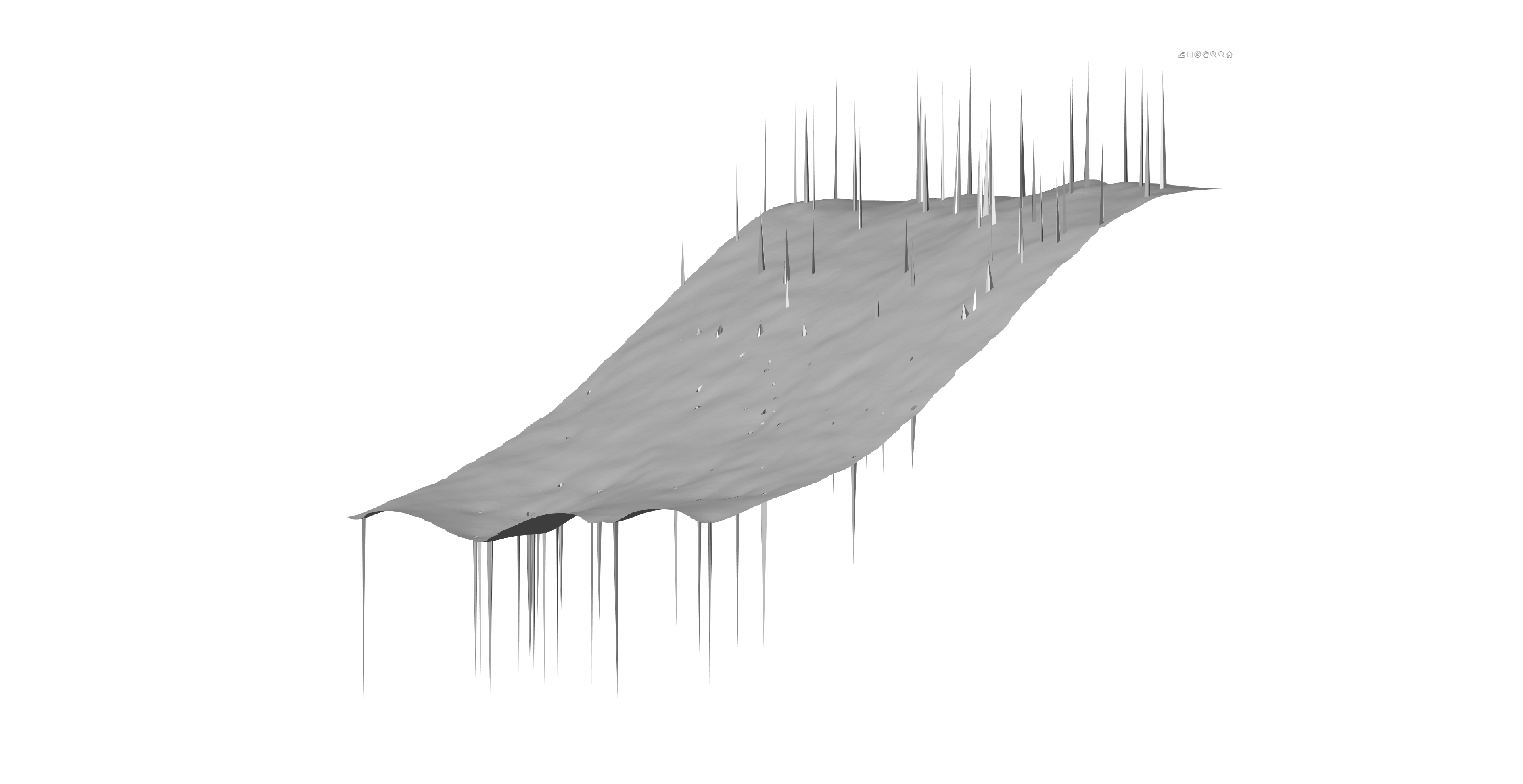}
\includegraphics[trim=350 100 320 70, clip=true,width=0.18\textheight]{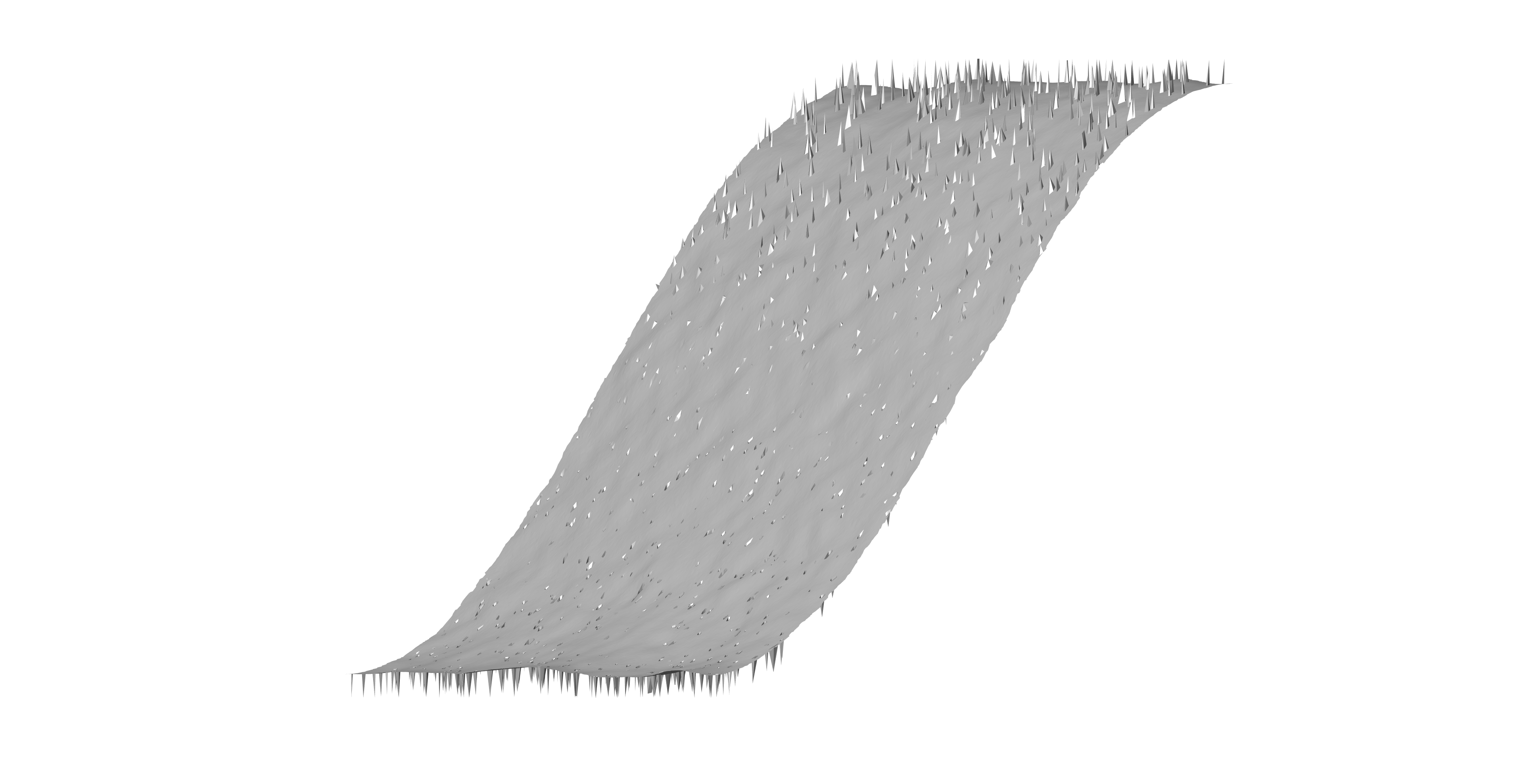}
\caption{Demonstration of spikes in Laplacian learning. The graph consists of $n=10^5$ independent uniform random variables on $[0,1]^2$ and from left to right we provide $10, 100$, and $1000$ labeled points according to the label function $\cos(x\cdot e_1)$ where $e_1=(1,0)$. Even at higher label rates where the solution is not constant, the spikes remain and the label values are not attained continuously.}
\label{fig:spikes}
\end{figure}

A significant amount of recent research has aimed to address this degeneracy by introducing new graph-based semi-supervised learning models at low label rates.
It was suggested in \cite{elalaoui16} to replace Laplacian learning with $p$-Laplacian regularization, which minimizes the $p$-Dirichlet energy
\[\cE^{(p)}_{n,\eps}(u) = \frac{1}{n^2\eps^p}\sum_{i,j=1}^n w_{ij}|u(x_i)-u(x_j)|^p.\]
If $p>d$ then the Sobolev embedding in the continuum suggests that all labels should be attained continuously and spikes cannot form. Using this intuition, \cite{elalaoui16} proposed to use $p$-Laplacian regularization with $p>d$ at very low label rates. This problem was studied rigorously in \cite{slepcev19} where it was shown that $p>d$ is actually insufficient for continuity at the labels if the length scale $\eps$ of the random geometric graph is too large (more precisely $n\eps^p \ll 1$).
However, when $\eps$ is small enough ($n\eps^p \ll 1$), then the continuum intuition is correct and one obtains a non-constant limit that this the solution to a well-posed $p$-Laplace equation. 
The authors of \cite{slepcev19} go on to show that the length scale restriction can be eliminated if the labels are first extended to nearest neighbors on the graph before solving the graph $p$-Laplace equation.
On a graph there are several different ways to formulate the $p$-Laplacian, and in \cite{calder18a,calder18AAA} the game-theoretic $p$-Laplacian was introduced for semi-supervised learning.
It was shown in \cite{calder18a} that solutions of the game-theoretic $p$-Laplacian on the graph are approximately H\"older continuous when $p>d$ and thus the label values are attained continuously, without any additional length scale restriction.
Algorithms for solving the variational and game-theoretic graph $p$-Laplace equations were developed in \cite{flores2019algorithms}.

Another way to address low label rates is to reweight the graph more heavily near labels to discourage spikes from forming. In~\cite{shi17} the authors propose a reweighting that amplifies the edge weights connected to labeled nodes by the ratio of unlabeled to labeled data. While the reweighting is useful in practice, it was shown in~\cite{calder18bAAA} that the method still degenerates to a constant at very low label rates. The authors of \cite{calder18bAAA} proposed a new way to reweight the graph that is sufficient to establish continuity at labels in the continuum limit by considering singularly weighted Sobolev spaces.

In this paper we address the question of precisely how many labels are required in Laplacian semi-supervised learning to avoid the formation of spikes and ensure the labels are propagated robustly. Previous work \cite{slepcev19} showed that if the number of labels is held fixed as the number of unlabeled data points tends to infinity, then spikes will form and the solution of Laplacian learning will cluster around constant solutions. Here, we allow the number of labeled data points to grow to infinity with the total size $n$ of the dataset, and we determine how the label rate $\beta$ should scale to ensure a well-posed continuum limit. Roughly speaking, our results show that for a random geometric graph with length scale $\eps>0$ (where $\eps\to0^+$ as $n\to\infty$), if $\beta \gg \eps^2$ then the solution of Laplacian learning is consistent in the continuum with Laplace's equation and the labels are attained continuously, and if $\beta \ll \eps^2$ then spikes form and the solution degenerates to a constant label function. In the well-posed regime $\beta \gg \eps^2$ we obtain a quantitative error estimate of $O(\eps\beta^{-1/2}\log(\eps^{-1}\beta^{1/2}))$ between the solution of the discrete Laplace learning problem and the continuum PDE. We also study $p$-Laplacian regularization and are able to show the ill-posedness result when $\beta \ll \eps^p$. The analogous well-posedness result for $p\neq 2$ is an interesting open problem for future research. We provide some numerical experiments at the end of the paper to illustrate the results on real and synthetic data.

The proofs in our paper use a blend of PDE, random walk, and variational techniques. The PDE techniques involve pointwise consistency of graph Laplacians and the maximum principle, which have been used for discrete to continuum convergence in graph-based settings recently \cite{calder18a,calder18AAA,yuan2020continuum,trillos2019maximum,shi18AAA}. The variational techniques use $\Gamma$-convergence and the $\TLp$ topology, which was originally developed in \cite{garciatrillos16} and has been used numerous times since for studying discrete to continuum convergence~\cite{slepcev19,garciatrillos18,garciatrillos18bAAA,garciatrillos16a,garciatrillos17,dunlop18AAA,garciatrillos17cAAA,garciatrillos18a,davis18,cristoferi18AAA,thorpe19,caroccia19,thorpe18,osting17} and for other applications, such as distances between signals for image processing~\cite{fitschen17,thorpe17a}. In all of our well-posedness results, the random walk arguments are essential for establishing convergence rates and correct scalings for $\beta$. At a high level, we use the random walk interpretation of Laplace learning \eqref{eq:RepForm} and show that when $\beta\gg \eps^2$ with high probability the random walk travels a distance no greater than $O(\eps\beta^{-1/2})$ before encountering a label, and thus $u$ is a local average of a Lipschitz label function. The random walk methods use martingale techniques to control stopping times and establish discrete to continuum convergence. In fact, one of our main results (Theorem \ref{thm:wellposed2}) is proved exclusively using random walk techniques, which to our knowledge have not been explored before in the discrete to continuum literature on graph-based learning. 

In the following section we introduce the notation, give the assumptions and state our results. We then prove the well-posedness results and ill-posedness results in Sections~\ref{sec:WellPosed} and~\ref{sec:IllPosed} respectively. Some numerical experiments are given in Section~\ref{sec:NumExp} and we conclude in Section~\ref{sec:Conc}. For convenience we include some background material in Appendix~\ref{sec:azuma} and~\ref{sec:app:TLpConv}.

\section{Setting and Main Results} \label{sec:MainRes}

We make the following assumption on the set of data points $\{x_i\}_{i=1}^n$.

\begin{enumerate}
\item[\bf (A1)] Let $d \geq 2$ and let $\Omega\subset \R^d$ be open, bounded, and connected with $\Ck{3}$ boundary. Let $x_1,x_2\dots,x_n$ be \emph{i.i.d.}~with density $\rho\in \Ck{2}(\bar{\Omega})$ satisfying $0<\rho_{\min}:=\inf_{x\in \Omega} \rho(x) \leq \sup_{x\in \Omega} \rho(x)=:\rho_{\max} < \infty$. We write $\Omega_n=\{x_1,x_2,\dots,x_n\}$.
\end{enumerate}
Let $\beta\in (0,1]$ and $\delta\in (0,\eps]$ and set
\begin{equation} \label{eq:MainRes:Boundary:NearBoundary}
\partial_\delta \Omega = \lb x\in \Omega \,:\, \dist(x,\partial \Omega) <\delta \rb.
\end{equation}
We consider two models for the distribution of training data.
\begin{description}
\item[Model 1.] Let $\wO$ be open with $\Ck{3}$ boundary and $\wO\subset\subset\Omega$. Each $x_i\in \wO$ is selected as training data independently with probability $\beta$.
\item[Model 2.] Each $x_i\in \partial_\delta \Omega$ is selected as training data independently with probability $\beta$.
\end{description}
In each case, we let $\Gamma_n\subset \Omega_n$ denote the collection of training data points and $Z_n \subset \{1,\dots,n\}$ the \emph{indices} of the training data points, so that
\[\Gamma_n = \{x_i \, : \, i \in Z_n\}.\]

We make the following assumption on the labels for training data.
\begin{enumerate}
\item[\bf (A2)] Let $g\in \Ck{3}(\bar{\Omega})$. Each training data point $x\in \Gamma_n$ is assigned the label $g(x)$.
\end{enumerate}

Given the data $\Omega_n$, an interaction potential $\eta:[0,\infty)\to[0,\infty)$, and a length scale $\eps>0$, we define a graph with nodes $\Omega_n$ and edge weights $\eta_{\eps}(|y-x|)$ between points $x,y\in \Omega_n$, where $\eta_\eps(t)=\frac{1}{\eps^d}\eta(\tfrac t \eps)$. 
In our results we allow for general choices of the interaction potential $\eta$, and only  assume that $\eta$ satisfies the following assumption.
\begin{enumerate}
\item[\bf (A3)] The interaction potential $\eta:[0,\infty)\to[0,\infty)$ is non-increasing, positive and continuous at $t=0$, $\eta(t) \geq 1$ for $t\leq 1$, and $\eta(t) = 0$ for $t\geq 2$.
We define $\eta_\eps = \frac{1}{\eps^d} \eta(\cdot/\eps)$ and
\[ \sigma_\eta:= \int_{\bbR^d} \eta(|x|) |x_1|^2 \, \dd x<\infty. \]
\end{enumerate}

The Laplacian semisupervised learning problem \cite{zhu03} is to minimize over all $u:\Omega_n\to \R$
\begin{align}\label{eq:L2}
\begin{split}
\cE_{n,\eps}(u) :=&\frac{1}{n^2\eps^2}\sum_{x,y\in \Omega_n} \eta_{\eps}(|x-y|)(u(x) - u(y))^2 \\
\hspace*{-30pt} \text{subject to the constraint } \quad  & u(x)=g(x) \text{ for  all }  x\in \Gamma_n.
\end{split}
\end{align}
Minimizers of the Laplacian learning problem \eqref{eq:L2} satisfy the boundary-value problem
\begin{equation}\label{eq:BVP}
\left\{\begin{aligned}
\cL_{n,\eps}u(x) &= 0,&&\text{if }x\in \Omega_n\setminus \Gamma_n\\ 
u(x)&=g(x),&&\text{if }x\in \Gamma_n,
\end{aligned}\right.
\end{equation}
where $\cL_{n,\eps}$ is the graph-Laplacian, which is given for $u:\Omega_n\to \R$ by
\begin{equation}\label{eq:GL}
\cL_{n,\eps}u(x)=\frac{1}{n\eps^2}\sum_{y\in \Omega_n}\eta_\eps(|x-y|)(u(x) -u(y)).
\end{equation}

The semi-supervised learning paradigm is that by using structure in the unlabeled data, we may improve the learning algorithm and reduce the amount of labeled data required, which reduces the cost of labeling data. The geometry and topology of the unlabeled data enters the process through the graph Laplacian $\cL_{n,\eps}$. Thus, the key question is how small we can take the labeling rate $\beta>0$ while ensuring the learning algorithm is stable and has a well-posed continuum limit. We organize our main results into two sections. Section \ref{sec:wellposed} presents our results on well-posedness, which roughly speaking show that when $\beta \gg \eps^2$ the learned function attains the labels continuously in the limit.
Then, in Section \ref{sec:illposed}, we present our ill-posedness results, that show that when $\beta \ll \eps^2$, the label information is lost in the limit.

\subsection{Well-posedness results}\label{sec:wellposed}

We now present our well-posedness results. We first consider {\bf Model 1}, where the continuum version of the boundary-value problem \eqref{eq:BVP} is
\begin{equation}\label{eq:model1pde}
\left\{\begin{aligned}
\div (\rho^2 \nabla u) &= 0&&\text{in }\Omega\setminus \wO\\ 
u &=g&&\text{in } \wO\\
\nabla u\cdot \bm{n}&=0&&\text{on }\partial \Omega.
\end{aligned}\right.
\end{equation}

We have the following well-posedness result.
\begin{theorem}[{\bf Model 1, Well-posed}]
\label{thm:wellposed1}
Assume {\bf (A1-3)} and {\bf Model 1}, let $u_n:\Omega_n\to \R$ be the solution of \eqref{eq:BVP}, and let $u\in \Ck{3}(\bar{\Omega})$ be the solution of \eqref{eq:model1pde}.  Then the following hold.
\begin{enumerate}[(i)]
\item There exists $C>c>0$ such that if $\beta \geq \eps^2$ and $\eps<\vartheta<c$ then 
\begin{equation}\label{eq:model1rate}
\max_{x\in \Omega_n} |u_n(x) - u(x)| \leq C \l\frac{\eps}{\sqrt{\beta}}\log\l\frac{\sqrt{\beta}}{\eps}\r + \vartheta\r
\end{equation}
holds with probability at least $1-Cn\exp\l-cn\eps^{d+2}\vartheta^2\r$.
\item If $\eps=\eps_n\to 0$ and $\beta=\beta_n\to 0$ as $n\to \infty$ such that $\beta_n\gg \eps_n^2$ and $n\beta_n \eps^d_n \gg \log(n)$ then
\begin{equation}\label{eq:model1limit}
\lim_{n\to \infty}\frac{1}{n}\sum_{x\in \Omega_n}|u_n(x)-u(x)|^2  = 0 \ \ \ \text{a.s.}
\end{equation}
\end{enumerate}
\end{theorem}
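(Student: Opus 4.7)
The plan is to exploit the random walk representation of Laplacian learning. Let $X_0, X_1, X_2, \dots$ be the random walk on $\Omega_n$ with transition probabilities $p_{xy} = \eta_\varepsilon(|x-y|)/d_x$, where $d_x = \sum_{y\in\Omega_n}\eta_\varepsilon(|x-y|)$, and let $\tau = \inf\{k : X_k\in \Gamma_n\}$ be the first hitting time of the labeled set. Then $u_n(x) = \mathbb{E}[g(X_\tau)\mid X_0=x]$, and since $\Gamma_n \subset \wO$ and $u = g$ on $\wO$ by~\eqref{eq:model1pde}, this rewrites as $u_n(x) = \mathbb{E}[u(X_\tau) \mid X_0 = x]$. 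The proof reduces to showing $\mathbb{E}[u(X_\tau)] \approx u(x)$, which I would achieve by splitting the trajectory at the first entrance time $T = \inf\{k : X_k\in \wO\}$ of the label region.

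During the outer phase $0\le k \le T$, pointwise consistency of $\cL_{n,\varepsilon}$ applied to $u\in \Ck{3}(\bar{\Omega})$ gives $\cL_{n,\varepsilon}u(x) = c_\eta\,\rho(x)^{-1}\mathrm{div}(\rho^2\nabla u)(x) + O(\varepsilon^2) + R_n(x)$ in the interior, together with an analogous expansion capturing the Neumann condition $\nabla u\cdot\bm{n}=0$ near $\partial\Omega$ (using the $\Ck{3}$ regularity of $\partial\Omega$). Since $u$ solves~\eqref{eq:model1pde} on $\Omega\setminus\wO$, the leading term vanishes there, and a Bernstein concentration argument followed by a union bound over the $n$ graph nodes yields $\sup_{x}|R_n(x)|\le C\vartheta$ with probability at least $1-Cn\exp(-cn\varepsilon^{d+2}\vartheta^2)$, which matches the probability appearing in~\eqref{eq:model1rate}. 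Writing $\tilde{\cL}u = u - Pu$ for the random-walk Laplacian and noting that $\tilde{\cL}u(x) = O(\varepsilon^2(\varepsilon^2 + \vartheta))$ on $\Omega\setminus\wO$, Dynkin's formula combined with the diffusive bound $\mathbb{E}[T] = O(1/\varepsilon^2)$ gives $|\mathbb{E}[u(X_T)] - u(x)| \le C(\varepsilon^2 + \vartheta)$, which is absorbed into the $\vartheta$ term of~\eqref{eq:model1rate}.

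During the inner phase $T \le k \le \tau$, each visit within $\wO$ is independently labeled with probability $\beta$ under \textbf{Model 1}, so $\tau - T$ is stochastically dominated by a geometric random variable with parameter $\beta$, giving $\tau - T \le C\beta^{-1}\log(1/\delta)$ with probability at least $1-\delta$. Conditionally on $\tau - T$, Azuma's inequality (Appendix~\ref{sec:azuma}) applied to the increments $X_{k+1} - X_k$, which are bounded by $2\varepsilon$ after centering out the $O(\varepsilon^2)$ drift, yields $|X_\tau - X_T| \le C\varepsilon\sqrt{(\tau-T)\log(1/\delta)}$; Lipschitz continuity of $u$ on $\wO$ then produces $|u(X_\tau) - u(X_T)| \le C\varepsilon\beta^{-1/2}\log(1/\delta)$. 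Choosing $\delta = \varepsilon^2/\beta$ and taking a union bound over $x\in\Omega_n$ absorbs the extra $\log n$ into $\log(\sqrt\beta/\varepsilon)$, yielding~\eqref{eq:model1rate}. For part~(ii), taking $\vartheta_n = \sqrt{\log(n)/(n\varepsilon_n^{d+2})}$ in (i) gives $\vartheta_n \to 0$ under the hypotheses $n\beta_n\varepsilon_n^d \gg \log n$ and $\beta_n \gg \varepsilon_n^2$, the failure probabilities become summable in $n$, and Borel--Cantelli delivers~\eqref{eq:model1limit}.

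The main obstacle I anticipate is pointwise consistency near $\partial\Omega$: the cancellations that yield an $O(\varepsilon^2)$ interior error break down when $B(x,2\varepsilon)$ extends past $\partial\Omega$, and the Neumann condition $\nabla u\cdot\bm{n}=0$ must be used to kill the leading first-order term via a careful expansion in boundary-flattened coordinates, which relies crucially on the $\Ck{3}$ regularity of $\partial\Omega$. A secondary difficulty is verifying that $\mathbb{E}[T]$ is $O(1/\varepsilon^2)$ uniformly in the starting point with high probability; this should follow from a standard martingale argument using a quadratic Lyapunov function along with the $\Ck{3}$ regularity of $\partial\wO$. Finally, one must be careful to couple the Azuma deviation with the geometric tail of $\tau-T$ without introducing logarithmic factors beyond $\log(\sqrt\beta/\varepsilon)$, which I expect to handle by choosing the confidence parameter $\delta$ tied to the error scale $\varepsilon/\sqrt\beta$ itself.
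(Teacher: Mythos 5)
Your random-walk outline reproduces part of the paper's strategy (the inner phase is close in spirit to Theorem~\ref{thm:boundary}), but the outer phase contains the main gap: you claim that on $\Omega\setminus\wO$ the Neumann condition makes the per-step drift of $u(X_k)$ of size $O(\eps^2(\eps^2+\vartheta))$, and this is false in the layer $\partial_{2\eps}\Omega$. By the boundary expansion (Theorem~\ref{thm:WellPosed:Pointwise:NLLaptoLLap}), for the exact solution of \eqref{eq:model1pde} one only gets $\L_{n,\eps}u(x)=O(\gamma_\eps(x)+\vartheta)$ there: the first-order term $\tfrac{2\rho\gamma_\eps}{\eps}\,\tfrac{\partial u}{\partial\bm{n}}$ is merely $O(\gamma_\eps)$ (since $\tfrac{\partial u}{\partial\bm{n}}=O(\delta_x)$ off $\partial\Omega$, not identically zero), and the $\sigma_2$ term involves second derivatives that the Neumann condition does not cancel; $\gamma_\eps$ is of order one when $\delta_x\lesssim\eps$. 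Your Dynkin/optional-stopping bound over $\E[T]=O(\eps^{-2})$ steps would therefore need an occupation-time estimate showing the walk spends $o(\eps^{-2})$ steps in $\partial_{2\eps}\Omega$ before reaching $\wO$, which you do not supply and which is delicate on a random geometric graph. The paper avoids this entirely: the walk is only run inside $\wO$ (Lemma~\ref{lem:azuma} and Theorem~\ref{thm:boundary} confine it via a second stopping time $\tau_2$, exit from a ball of radius $r\le\dist(x,\partial\wO)$), and the Neumann layer is handled by the discrete maximum principle with the barrier $\varphi_\alpha$ of Lemma~\ref{lem:barrier}, whose term $M\vartheta\gamma_\eps/\eps$ dominates the $O(\gamma_\eps)$ consistency error. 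Relatedly, your inner phase treats the labels as re-randomized at each visit and ignores excursions of the walk out of $\wO$ after time $T$; the quenched bound $p_{n,\eps}(x;\wO)/d_{n,\eps}(x)\ge c\beta$ holds only while the walk remains in $\wO$, which is precisely what the $\tau_1\wedge\tau_2$ argument of Theorem~\ref{thm:boundary} is designed to guarantee.

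Part (ii) also does not follow from part (i) by choosing $\vartheta_n=\sqrt{\log n/(n\eps_n^{d+2})}$. The hypotheses of (ii) only give $n\beta_n\eps_n^d\gg\log n$ and $\beta_n\gg\eps_n^2$, which permit $n\eps_n^{d+2}\ll\log n$; in that regime your $\vartheta_n$ does not tend to zero and pointwise consistency of the graph Laplacian (on which both your outer phase and part (i) rest, through the probability $1-Cn\exp(-cn\eps^{d+2}\vartheta^2)$) is unavailable. This is exactly why the paper proves (ii) by a different route: $\TL^2$ compactness and $\Gamma$-convergence of the constrained Dirichlet energies (Proposition~\ref{prop:WellPosed:Subset:Hard:Compact} and Lemma~\ref{lem:WellPosed:Subset:Hard:RecoverySeq}), combined with the boundary estimate of Theorem~\ref{thm:boundary}, whose hypotheses hold with probability $1-Cn\exp(-cn\beta\eps^d)$ and are therefore summable under the assumptions of (ii).
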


\begin{remark}
Theorem \ref{thm:wellposed1} says that the Laplacian learning with training data selected by {\bf Model 1} is well-posed when $\beta \gg \eps^2$ and $n\beta \eps^d \gg \log(n)$, which is equivalent to
\[ \sqrt{\beta} \gg \eps \gg \left( \frac{\log(n)}{n\beta} \right)^{\frac{1}{d}}.\]
By choosing $\vartheta = \frac{\eps}{\sqrt{\beta}}$ we obtain a convergence rate of $O(\frac{\eps}{\sqrt{\beta}})$ up to log factors for larger choices of $\eps$ satisfying $n\beta^{-1}\eps^{d+4}\gg \log(n)$, which is equivalent to
\[\sqrt{\beta} \gg \eps \gg \left( \frac{\beta\log(n)}{n} \right)^{\frac{1}{d+4}}.\]
We expect the well-posedness result to extend to the regime
\[\left( \frac{\log(n)}{n} \right)^{\frac{1}{d}}\ll \eps \ll \left( \frac{\log(n)}{n\beta} \right)^{\frac{1}{d}},\]
provided $\beta \gg \eps^2$, but are unable to prove this on random geometric graphs due to local irregularities in the graphs that lead to a large drift in a random walk. We discuss the smaller length scale regime further in Remark~\ref{rem:lattice}.
\end{remark}

\begin{remark}
For Theorem~\ref{thm:wellposed1}(ii) it is enough that $g$ is Lipschitz continuous and $\Omega$, $\wO$ have Lipschitz boundaries (i.e. we do not need $g\in \Ck{3}(\bar{\Omega})$ or $\Omega$, $\wO$ to have $\Ck{3}$ boundaries).
This is because the proof of Theorem~\ref{thm:wellposed1}(ii) does not use elliptic regularity results. 
\end{remark}

We now turn our attention to {\bf Model 2}. Here, the continuum version of the boundary-value problem \eqref{eq:BVP} is
\begin{equation}\label{eq:model2pde}
\left\{\begin{aligned}
\div (\rho^2 \nabla u) &= 0&&\text{in }\Omega\\ 
u &=g&&\text{on }\partial \Omega.
\end{aligned}\right.
\end{equation}

We have the following well-posedness result, analagous to Theorem \ref{thm:wellposed1}.
\begin{theorem}[{\bf Model 2, Well-posed}]
\label{thm:wellposed2}
Assume {\bf (A1-3)} and {\bf Model 2}, let $u_n:\Omega_n\to \R$ be the solution of \eqref{eq:BVP}, and let $u\in \Ck{3}(\bar{\Omega})$ be the solution of \eqref{eq:model2pde}. There exists $C>c>0$ such that if $\delta\leq \eps$,  $\beta\delta\geq \eps\vartheta$ and $\eps \leq \vt \leq c$ then we have that
\begin{equation}\label{eq:model2rate}
\max_{x\in \Omega_n} |u_n(x) - u(x)| \leq C\frac{\vt \eps}{\beta\delta}
\end{equation}
holds with probability at least $1 - Cn\exp\left( -cn\eps^{d+2} \vt^2 \right)$.
\end{theorem}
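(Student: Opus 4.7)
As advertised in the introduction, the proof of Theorem~\ref{thm:wellposed2} is based purely on random walks. Let $(X_t)_{t\geq 0}$ denote the Markov chain on $\Omega_n$ with one-step transitions $p_{ij} = w_{ij}/d_i$, where $w_{ij} = \eta_\eps(|x_i-x_j|)$ and $d_i = \sum_j w_{ij}$, started at $X_0 = x$, and let $\tau = \inf\{t\geq 0 : X_t \in \Gamma_n\}$ be the hitting time of the labelled set. The representation formula \eqref{eq:RepForm} reads $u_n(x) = \E_x[g(X_\tau)]$, while applying the discrete Dynkin identity to the continuum solution $u\in\Ck{3}(\bar{\Omega})$ of \eqref{eq:model2pde} yields
\[
u(x) - u_n(x) \;=\; \E_x\bigl[u(X_\tau) - g(X_\tau)\bigr] \;+\; \E_x\!\left[\sum_{s=0}^{\tau-1} \mathcal{L}_r u(X_s)\right],
\]
where $\mathcal{L}_r u(y) = d_y^{-1}\sum_z w_{yz}(u(y)-u(z))$ is the random-walk generator (so that $\E[u(X_{t+1})-u(X_t)\mid X_t=y] = -\mathcal{L}_r u(y)$). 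Since $\Gamma_n\subset\partial_\delta\Omega$ and $u=g$ on $\partial\Omega$ with $u,g\in\Ck{3}(\bar\Omega)$, a first-order Taylor estimate gives $|u(y)-g(y)|\leq C\delta$ on $\partial_\delta\Omega$, so the boundary term is $O(\delta)$; under $\delta\leq\eps$ and $\beta\delta\geq\eps\vt$ we have $\delta\leq C\vt\eps/(\beta\delta)$, so this contribution can be absorbed. The bulk term is the crux.

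\textbf{Consistency.} A Bernstein concentration bound for the sum $\mathcal{L}_{n,\eps} u(x) = (n\eps^2)^{-1}\sum_j \eta_\eps(|x-x_j|)(u(x)-u(x_j))$, combined with the Taylor expansion $\E[\mathcal{L}_{n,\eps} u(x)] = -(\sigma_\eta/(2\rho(x)))\div(\rho^2\nabla u)(x) + O(\eps)$ and the PDE $\div(\rho^2\nabla u) = 0$, shows that with probability at least $1-Cn\exp(-cn\eps^{d+2}\vt^2)$ we have $|\mathcal{L}_{n,\eps} u(x)|\leq C\vt$ for every $x\in \Omega_n\setminus\partial_\eps\Omega$; inside the thin strip $\partial_\eps\Omega$ one only gets $|\mathcal{L}_{n,\eps}u(x)|\leq C/\eps$ because of one-sided truncation of the kernel support. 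Concentration of the degree $d_i\asymp n\rho(x_i)$ then translates these into $|\mathcal{L}_r u(y)|\leq C\eps^2\vt$ in the bulk and $|\mathcal{L}_r u(y)|\leq C\eps$ near $\partial\Omega$. The probability factor and the restriction $\eps\leq\vt$ in \eqref{eq:model2rate} originate exclusively from this step.

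\textbf{Hitting-time control via a Lyapunov function.} We next construct a nonnegative Lyapunov function $\Phi:\Omega_n\to[0,\infty)$ satisfying $\Phi\equiv 0$ on $\Gamma_n$, $\|\Phi\|_\infty\leq C\vt\eps/(\beta\delta)$, and the pointwise discrete inequality $\mathcal{L}_r\Phi(y)\geq |\mathcal{L}_r u(y)|$ on $\Omega_n\setminus\Gamma_n$. Applying optional stopping to the martingale $\Phi(X_t)+\sum_{s<t}\mathcal{L}_r\Phi(X_s)$ gives
\[
\biggl|\E_x\Bigl[\sum_{s=0}^{\tau-1}\mathcal{L}_r u(X_s)\Bigr]\biggr|
\;\leq\; \E_x\Bigl[\sum_{s=0}^{\tau-1}\mathcal{L}_r\Phi(X_s)\Bigr]
\;=\; \Phi(x)-\E_x[\Phi(X_\tau)]
\;\leq\; C\,\frac{\vt\eps}{\beta\delta}.
\]
We build $\Phi$ as the sum of two pieces. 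The first is the graph version of a smooth continuum barrier $\psi\in\Ck{2}(\bar\Omega)$ with $\div(\rho^2\nabla\psi)\leq -1$, whose pointwise consistency supplies the ``interior mixing'' budget of order $\eps^2$ per step. The second is an extra term supported in $\partial_\delta\Omega$ that decreases by $\sim\beta\delta$ in expectation each time the walk enters the boundary strip, encoding the geometric $1/(\beta\delta)$ waiting time to encounter a random label. Balancing the two pieces so that $\mathcal{L}_r\Phi\geq |\mathcal{L}_r u|$ holds both in the bulk and inside $\partial_\eps\Omega$ (where $|\mathcal{L}_r u|\sim \eps$ is much larger) fixes the normalizing constants and produces the desired upper bound on $\|\Phi\|_\infty$.

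\textbf{Main obstacle.} The delicate step is the construction of $\Phi$: we must make $\mathcal{L}_r\Phi$ dominate $|\mathcal{L}_r u|$ on \emph{every} point of $\Omega_n\setminus\Gamma_n$, despite both the graph and the labelling being random. This requires coupling the pointwise consistency estimate with a uniform lower bound on the local density of labels in $\partial_\delta\Omega$, both holding on a single high-probability event (which ultimately produces the probability in \eqref{eq:model2rate}). The threshold $\beta\delta\geq\eps\vt$ is exactly the balance point between the per-step consistency budget $\eps^2\vt$ and the expected walk length of order $1/(\eps\beta\delta)$ steps before hitting $\Gamma_n$; their product is the characteristic scale $\vt\eps/(\beta\delta)$. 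A secondary subtlety is the degraded consistency of order $\eps$ inside $\partial_\eps\Omega$, which is handled by arranging the boundary piece of $\Phi$ to absorb an additional $O(\eps)$ per boundary step, exploiting that the walk spends only a $\delta/\eps$-fraction of its excursions in this strip.
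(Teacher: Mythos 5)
There is a genuine gap, and it sits exactly where you flag the ``main obstacle'': the Lyapunov function $\Phi$ with $\L_r\Phi\geq|\L_r u|$ on all of $\Omega_n\setminus\Gamma_n$ and $\|\Phi\|_\infty\leq C\vt\eps/(\beta\delta)$ is never constructed, and the construction you sketch would fail near the boundary. By Corollary~\ref{cor:WellPosed:Pointwise:GraphtoLLap} together with Lemma~\ref{lem:WellPosed:Subset:Hard:gamma}, the relevant bound is $|\L_r u(x)|\leq C\l\eps\,\gamma_\eps(x)+\eps^2\vt\r$, and $\gamma_\eps$ is of order one on most of the strip $\partial_{2\eps}\Omega$; since $\delta\leq\eps$, almost all of this strip lies \emph{outside} $\partial_\delta\Omega$. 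A second piece of $\Phi$ ``supported in $\partial_\delta\Omega$'' has $\L_r\Phi_2(y)=d_y^{-1}\sum_z w_{yz}(\Phi_2(y)-\Phi_2(z))<0$ at points $y$ just outside $\partial_\delta\Omega$, i.e.\ it has the wrong sign precisely where domination is needed. The natural repair, $\Phi_2=h\,\one_{\Omega_n\setminus\Gamma_n}$ with $h\sim\eps^2/(\beta\delta)$, gives $\L_r\Phi_2(y)=h\,\P_y(X_1\in\Gamma_n)$, but the lower bound $\P_y(X_1\in\Gamma_n)\geq c\beta\delta/\eps$ coming from \eqref{eq:pestimate} is only available for $y\in\partial_\eps\Omega$; in the annulus $\partial_{2\eps}\Omega\setminus\partial_\eps\Omega$ the one-step label-hitting probability can degenerate as $\delta_y$ approaches the kernel range, so pointwise domination there requires either a quantitative cap-versus-slab comparison of $\gamma_\eps(y)$ with the local label mass (in the spirit of Lemma~\ref{lem:WellPosed:Subset:Hard:gamma}), together with concentration at points where the expected label count is small, or an additional mechanism. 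Your accounting heuristic is also off: the region where $|\L_r u|\sim\eps$ has width $\sim\eps$, not $\delta$, so the walk does not spend a ``$\delta/\eps$-fraction'' of its time there; the correct budget is $\sim\eps$ per boundary visit times $\sim\eps/(\beta\delta)$ visits, i.e.\ $\eps^2/(\beta\delta)\leq\vt\eps/(\beta\delta)$, which is what any admissible $\Phi$ must encode.

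For comparison, the paper's proof (Theorem~\ref{thm:deltaboundary}) deliberately avoids a single global discrete barrier. It first reduces to the strip: using the continuum supersolution \eqref{eq:phidrift} and optional stopping it shows the walk reaches $\partial_{2\eps}\Omega$ in $\E[\tau]\leq C\eps^{-2}$ steps, so the interior contributes only the per-step consistency error times $\E[\tau]$. It then runs a contraction on the strip: for $x\in\partial_\eps\Omega$ the one-step probability of hitting $\Gamma_n$ is $\geq c\beta\delta\eps^{-1}$ by \eqref{eq:pestimate}, while points of $\partial_{2\eps}\Omega\setminus\partial_\eps\Omega$ move into $\partial_\eps\Omega$ in one step with probability bounded below via the drift estimate \eqref{eq:qestimate} (Lemma~\ref{lem:qdrift}); this yields $\max_{\partial_{2\eps}\Omega}|u_n-u|\leq C\eps+(1-c\beta\delta\eps^{-1})\max_{\partial_{4\eps}\Omega}|u_n-u|$, and combining with the interior reduction and rearranging gives the rate. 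That iteration is exactly the substitute for the pointwise domination your barrier would need in the annulus. If you wish to keep the one-shot Lyapunov formulation, incorporate the inward-drift estimate (e.g.\ a third piece of $\Phi$ that decreases along the drift in $\partial_{2\eps}\Omega\setminus\partial_\eps\Omega$) or carry out the kernel comparison above, and check that all required concentration events fit on a single event of probability $1-Cn\exp(-cn\eps^{d+2}\vt^2)$; as written, the proposal asserts rather than proves the key object.
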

\begin{remark}
Theorem \ref{thm:wellposed2} says that the Laplacian semisupervised learning with {\bf Model 2} is well-posed when $\beta\delta \gg \eps^2$, $n\eps^{d+2}\gg \log(n)$, and $n\eps^{d-1}\beta\delta \gg \log(n)$, which is equivalent to the length scale restriction
\[\sqrt{\beta\delta} \gg \eps \gg \left( \frac{\log(n)}{n} \right)^{\frac{1}{d+2}}.\]
In the case that $n\eps^{d+4}\gg \log(n)$, we have a convergence rate of $O(\frac{\eps^2}{\beta\delta})$.  As with {\bf Model 1}, we expect Theorem \ref{thm:wellposed2} to extend to length scales $\eps$ in the regime
\[\left( \frac{\log(n)}{n} \right)^{\frac{1}{d}}\ll \eps \ll \left( \frac{\log(n)}{n} \right)^{\frac{1}{d+2}},\]
provided $\beta\delta \gg \eps^2$. We defer discussion of this to Remark \ref{rem:lattice}.
\end{remark}

\begin{remark}
\label{rem:MainRes:wellposed:Soft}
Similar results to Theorems \ref{thm:wellposed1} and \ref{thm:wellposed2} can be obtained for the soft constrained version of the Laplacian learning problem \eqref{eq:L2}:
\begin{equation}\label{eq:softL2}
\min_{w:\Omega_n\to \R} \cEpenneps(w):= \frac{1}{n^2\eps^2}\sum_{x,y\in \Omega_n} \eta_{\eps}(|x-y|)(w(x) - w(y))^2 + \frac{\lambda}{|\Gamma_n|}\sum_{y\in \Gamma_n} |w(y) - g(y)|^2.
\end{equation}
We sketch the proof here.  Let $w_n$ be the solution of \eqref{eq:softL2} and note that
\[\max_{y\in \Gamma_n}|w_n(y)-g(y)|^2 \leq \sum_{y\in \Gamma_n} |w_n(y)-g(y)|^2 \leq \cEpenneps(w_n)\frac{|\Gamma_n|}{\lambda}.\]
The maximum principle yields
\[\max_{y\in \Omega_n}|w_n(y) - u_n(y)|^2 \leq \max_{y\in \Gamma_n}|w_n(y) - u_n(y)|^2 \leq \cEpenneps(w_n)\frac{|\Gamma_n|}{\lambda} \leq \frac{Cn\beta}{\lambda},\]
under the assumptions of {\bf Model 1} with probability at least $1-e^{-cn\beta}$, where $u_n$ is the solution of the hard constrained problem \eqref{eq:BVP}. Thus, under the same assumptions as Theorem \ref{thm:wellposed1}(i) we have
\begin{equation}\label{eq:model1ratesoft}
\max_{x\in \Omega_n} |w_n(x) - u(x)| \leq C\left(\frac{\eps}{\sqrt{\beta}}\log\l\frac{\sqrt{\beta}}{\eps}\r +\vartheta + \frac{n\beta}{\lambda}\right)
\end{equation}
with probability at least $1 - Cn\exp\left( -cn\eps^{d+2}\vartheta^2\right)$, where $u$ is the solution of \eqref{eq:model1pde}.
Under the assumptions of Theorem \ref{thm:wellposed1}(ii), with the additional assumption that $\lambda=\lambda_n\to \infty$ as $n\to \infty$ so that $\lambda_n \gg n\beta_n$, we have
\begin{equation}\label{eq:model1limitsoft}
\lim_{n\to \infty}\frac{1}{n}\sum_{x\in \Omega_n}|w_n(x)-u(x)|^2  = 0 \ \ \ \text{a.s.}
\end{equation}
A similar argument holds for {\bf Model 2} under the assumptions of Theorem~\ref{thm:wellposed2}.

We remark, however, that soft constraints are normally used for problems with noisy labels, or regression problems. Our model assumes clean labels with a high degree of confidence, and here it is natural to use hard constraints. It would be interesting to extend our results to a noisy label model of the form $y = g(x) + \eta$ where $\eta$ is noise. In this case, we expect the argument above to be suboptimal and a more nuanced approach is necessary.
\end{remark}

\begin{remark}
\label{rem:MainRes:wellposed:GTLap}
The proofs of Theorems \ref{thm:wellposed1} and \ref{thm:wellposed2} use the random walk interpretation of Laplace learning~\eqref{eq:BVP} given in \eqref{eq:RepForm}. Since the variational graph $p$-Laplacian does not have a random walk interpretation, our arguments to not directly extend to this case.  However, there are other variants of the graph Laplacian that have natural random walk interpretations. For example, the game-theoretic $p$-Laplacian \cite{calder18a} has an interpretation in terms of two-player stochastoc tug-of-war games, which are the natural extensions of the random walk interpretation of Laplace's equation to the $p$-Laplacian. The weighted graph Laplacian~\cite{shi17} also has a random walk interpretation, since it is a $p=2$ graph Laplacian on a reweighted graph. We expect it to be possible to adapt the arguments used in this paper to these settings in order to obtain similar results. We note that there has already been some analysis of convergence rates for the weighted graph Laplacian in~\cite{shi18AAA}.
\end{remark}

\begin{remark}\label{rem:lattice}
Theorems \ref{thm:wellposed1} and \ref{thm:wellposed2} require lower bounds on the connectivity length scale $\eps>0$ of the graph of $n\eps^{d+2}\gg \log(n)$, except for Theorem \ref{thm:wellposed1} (ii) which requires the weaker condition $n\eps^d\beta \gg \log(n)$. The lower bounds are required to control the randomness in the graph with concentration inequalities. It is well-known that the graph is connected with high probability when $n\eps^d \gg \log(n)$. It is thus natural to ask whether Theorems \ref{thm:wellposed1} and \ref{thm:wellposed2}, in particular their convergence rates, hold in the intermediate length scale regime
\begin{equation}\label{eq:lengthscale}
 \left( \frac{\log(n)}{n} \right)^{1/d }\ll \eps \ll \left( \frac{\log(n)}{n} \right)^{1/(d+2)}.
\end{equation}
In this length scale regime, we lose control of all estimates on the random walk due to a large random drift arising from local irregularities in the random geometric graph. In particular, we also lose pointwise consistency of graph Laplacians in this regime. To extend our results to this setting, one would need to show that the random walk becomes far more regular over many steps, and establish some form of a central limit theorm for the walk on a random geometric graph. To our knoweldge, such results are not known in the length scale regime \eqref{eq:lengthscale}.

Nevertheless, based on energy scaling arguments we expect Theorems \ref{thm:wellposed1} and \ref{thm:wellposed2} to extend to the length scale regime \eqref{eq:lengthscale} provided $\beta \gg \eps^2$ and $\beta\delta\gg \eps^2$, respectively, although the exact form of the rates may change. To see how we may expect the random walk arguments to extend to this length scale regime, we momentarily consider the case of a lattice, where the graph is highly regular and the random walk is well-understood.  Let $\Z_\eps = \eps\Z= \{\eps z\, : \, z \in\Z\}$ and consider the lattice $\Z_\eps^d  = (\Z_\eps)^d$. Define the graph Laplacian
\begin{equation}\label{eq:latticeGL}
\Delta_\eps u(x) = \sum_{i=1}^d\sum_{b=\pm 1}(u(x+b \eps e_i) - u(x))
\end{equation}
where $u:\Z_\eps^d \to \R$.  For an integer $m\geq 1$ and a function $g:\Z^d_\eps\to \R$, consider the Laplace equation
\begin{equation}\label{eq:latticeLE}
\left\{\begin{aligned}
\Delta_\eps u(x) &= 0,&&\text{if }x \in \Z^d_\eps\setminus \Z^d_{m\eps}\\ 
u(x) &=g(x),&&\text{if }x\in \Z^d_{m\eps}.
\end{aligned}\right.
\end{equation}
The fraction of lattice points with Dirichlet conditions, or the label rate, is $\beta := m^{-d}$. Let $u_\eps$ be the solution of \eqref{eq:latticeLE}. We claim that
\begin{equation}\label{eq:latticerate}
\sup_{x\in \Z^d_\eps}|u_\eps(x) - g(x)| \leq \frac{C\eps}{\beta^{1/d + 1/2}},
\end{equation}
provided $g:\R^d\to \R$ is Lipschitz and bounded. Note that the rate in \eqref{eq:latticerate} requires $\beta \gg \eps^{2d/(d+2)}$  for the error to converge to zero as $\eps \to 0$, which is worse than the expected rate of $\beta \gg \eps^2$. We believe that this is due to our method of proof using random walks, and that some more precise tools are needed to establish the tight $\beta\gg \eps^2$ rate. We note that  in high dimensions $2d/(d+2)\approx 2$ and the rate is very close to optimal.

We briefly sketch a random walk proof of \eqref{eq:latticerate}. The ideas in the proof are similar to ones used in proving Theorems \ref{thm:wellposed1} and \ref{thm:wellposed2}. Let $X_0,X_1,X_2,\dots$ be a lazy simple random walk on $\Z^d_\eps$. The lazy simple random walk has transition probabilities 
\[\P(X_{k+1} = y \, | \, X_k=x) = \frac{1}{2d+1}\one_{|x-y| \leq \eps}\]
for $x,y\in \Z^d_\eps$.
The walk is \emph{lazy} because it has a positive probability of remaining at the current vertex at each step, and this makes the walk \emph{aperiodic}. We note that $\frac{1}{2d+1}\Delta_\eps$ is the generator for the lazy simple random walk, which is the key property that we use in the proof below.

Define the $k$-step probability transition function for the walk $p_k:\Z^d_\eps\times \Z^d_\eps\to [0,1]$ as
\begin{equation}\label{eq:heatkernel}
p_k(x,y) = \P(X_k=y \, | \, X_0=x).
\end{equation}
By the Local Central Limit Theorem for random walks (see \cite[Theorem 2.1.1]{lawler2010random}), there exists $N,C>0$, depending only on $d$, such that for all $x,y\in \Z^d_\eps$ we have
\begin{equation}\label{eq:heatkernelGauss}
p_k(x,y) \geq \frac{C}{k^{d/2}} \ \ \ \text{ provided }k \geq N \text{ and }|x-y|\leq \eps\sqrt{k}.
\end{equation}
Of course the Local Central Limit Theorem is stronger, and shows that $p_k(x,y)$ is approximately Gaussian, however, we state \eqref{eq:heatkernelGauss} to emphasize that the random walk methods only require lower bounds of the right order of magnitude. Provided $k \geq Cm^2$ we have
\begin{equation}\label{eq:stoppingtimebound}
\P(X_k\in \Z^d_{m\eps} \, | \, X_0=x)=\sum_{y\in \Z^d_{m\eps}}p_k(x,y)\geq\sum_{\substack{y\in \Z^d_{m\eps}\\ |x-y|\leq \eps\sqrt{k} }}\frac{C}{k^{d/2}}\geq c \left( \frac{\sqrt{k}}{m} \right)^d \frac{C}{k^{d/2}} = \frac{c}{m^d},
\end{equation}
where $C,c>0$ depend only on dimension $d$ and can change from line to line. Assume the random walk starts at $X_0=x\in \Z^d_\eps$ and define the stopping time
\[\tau = \inf\{k>0 \, |\, X_\tau \in \Z^d_{m\eps}\}.\]
 By \eqref{eq:stoppingtimebound} we obtain $\E[\tau]\leq Cm^{2+d}$. By a standard Martingale argument $Z_k := |X_k-x|^2 - Ck\eps^2$ is a supermartingale for large enough $C>0$, and so we have $\E[Z_\tau]\leq \E[Z_0]=0$, which yields
\[\E[|X_\tau - x|^2] \leq C\E[\tau]\eps^2 \leq Cm^{2+d}\eps^2.\]
Since $\Delta_\eps u(X_k)=0$ for $k<\tau$, $u(X_k)$ is a martingale up to the stopping time $\tau$. Thus, we may apply Doob's Optional Stopping Theorem to obtain $u_\eps(x) = \E[g(X_\tau)]$. Combining these observations yields
\[ \la u_\eps(x) - g(x)\ra =\la\E\ls u_\eps(X_\tau) - g(x)\rs\ra= \la\E\ls g(X_\tau) - g(x)\rs\ra\leq C\E\ls\la X_\tau - x\ra\rs\leq Cm^{1+d/2}\eps,\]
which establishes \eqref{eq:latticerate}.
\end{remark}

\subsection{Ill-posedness results}
\label{sec:illposed}

When $\eps_n$ satisfies $n\beta_n\eps_n^{d}\gg \log(n)$ then Theorem~\ref{thm:wellposed1} implies the problem is asymptotically well-posed if $\beta_n\gg\eps_n^2$.
We can show that this bound is tight in certain regimes; in particular we can show that if $\beta_n\ll\eps_n^2$ (and $\eps_n$ satisfies a lower bound) then constraints are forgotten in the limit.
This is the ill-posed regime.

The results in the previous section were proved using the random walk interpretation of solutions to~\eqref{eq:L2}.
This relationship is special to the 2-Dirichlet energy.
The results of this section are proved using variational methods which in particular allow one to treat the $p$-Laplacian semisupervised learning problem introduced in~\cite{zhou05}.
That is, for $p> 1$, we minimize over all $u:\Omega_n\to \bbR$ 
\begin{align}\label{eq:MainRes:illposed:Lp}
\begin{split}
\cE_{n,\eps}^{(p)}(u) :=&\frac{1}{n^2\eps^p}\sum_{x,y\in \Omega_n} \eta_{\eps}(|x-y|)|u(x) - u(y)|^p \\
\hspace*{-30pt} \text{subject to the constraint } \quad  & u(x)=g(x) \text{ for  all }  x\in \Gamma_n.
\end{split}
\end{align}
We have the following result corresponding to the ill-posed version of Theorem~\ref{thm:wellposed1}.

\begin{theorem}[{\bf Model 1, Ill-posed}]
\label{thm:MainRes:illposed:illposed1}
Assume $p>1$, {\bf (A1-3)} and {\bf Model 1}. Let $u_n:\Omega_n\to \bbR$ be the solution of~\eqref{eq:MainRes:illposed:Lp}.
Further, we assume that $\beta=\beta_n\to 0^+$ and $\eps=\eps_n\to 0^+$ satisfy
\begin{equation} \label{eq:MainRes:illposed:scaling1}
\beta_n\ll \eps_n^p, \quad n\eps_n^p\gg\log(n) \quad \text{and} \quad n \eps_n^d \gg \log(n).
\end{equation}
Then, with probability one, the set $\{u_n\}_{n\in\bbN}$ is pre-compact and any convergent subsequence converges to a constant.
\end{theorem}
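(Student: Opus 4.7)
The plan is to apply the $\TLp$/$\Gamma$-convergence framework advertised in the introduction. I would first show the minimum $p$-Dirichlet energy vanishes via an explicit competitor, then combine this with a uniform $L^\infty$ bound to obtain pre-compactness in $\TLp$, and finally invoke the $\Gamma$-liminf inequality to conclude that any subsequential limit must be constant.

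For the $L^\infty$ bound, I would apply the standard truncation argument: for $p>1$ the minimizer $u_n$ of \eqref{eq:MainRes:illposed:Lp} is unique, and the pointwise truncation $(u_n \wedge \max_\Omega g) \vee \min_\Omega g$ preserves the Dirichlet constraint and does not increase $\cEpnepsn$; graph connectedness (which holds almost surely under $n\eps_n^d \gg \log n$) then forces $u_n$ to already lie in $[\min_\Omega g,\max_\Omega g]$.

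For the energy bound, fix any constant $c \in \R$ and consider the competitor $v_n = g\one_{\Gamma_n} + c\one_{\Omega_n \setminus \Gamma_n}$, which satisfies the hard constraint, so $\cEpnepsn(u_n) \leq \cEpnepsn(v_n)$. I would partition the pairs in this sum by whether both, one, or neither endpoints are labeled. Unlabeled--unlabeled pairs contribute zero; labeled--unlabeled pairs contribute at most $\|g-c\|_\infty^p\cdot |\Gamma_n| \cdot \sup_{x\in \Omega_n}\sum_{y\in \Omega_n} \eta_{\eps_n}(|x-y|)$; and labeled--labeled pairs contribute at most $(2\Lip(g)\eps_n)^p\cdot|\Gamma_n|\cdot\sup_{x\in\Omega_n}\sum_{y\in\Omega_n}\eta_{\eps_n}(|x-y|)$, using the Lipschitz estimate $|g(x)-g(y)|\leq 2\Lip(g)\eps_n$ on the support of $\eta_{\eps_n}(|x-y|)$. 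A Bernstein-plus-union-bound concentration, valid under $n\eps_n^d\gg \log n$, gives $\sup_x \sum_y \eta_{\eps_n}(|x-y|) \leq C n$ and $|\Gamma_n|\leq C n\beta_n$ almost surely for $n$ large. After dividing by $n^2\eps_n^p$ the two nontrivial groups are $O(\beta_n/\eps_n^p)$ and $O(\beta_n)$, both vanishing by $\beta_n \ll \eps_n^p \to 0$. Hence $\cEpnepsn(u_n) \to 0$ almost surely.

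With the uniform $L^\infty$ bound and vanishing $p$-energy in hand, the $\TLp$ compactness theorem of \cite{garciatrillos16} (recalled in Appendix~\ref{sec:app:TLpConv}) applies almost surely under $n\eps_n^d \gg \log n$: any subsequence of $\{u_n\}$ has a further $\TLp$-convergent subsequence whose limit $u$ lies in $\Wkp{1}{p}(\Omega)$. The $\Gamma$-liminf inequality then forces
\[
C\int_\Omega |\nabla u|^p \rho^2\,\dd x \leq \liminf_{n\to\infty} \cEpnepsn(u_n) = 0,
\]
so $\nabla u \equiv 0$; since $\Omega$ is connected, $u$ is a constant. The hardest part is upgrading the competitor estimate to an almost-sure bound uniform in $n$, which is precisely where the length-scale hypothesis $n\eps_n^d \gg \log n$ is needed to control the weighted degrees via a Bernstein-type bound and union bound over $x\in \Omega_n$; the conditions $\beta_n\ll\eps_n^p$ and $n\eps_n^p\gg\log n$ then guarantee that the critical labeled--unlabeled term, of size $\beta_n/\eps_n^p$, actually goes to zero and that enough labels are present for $v_n$ to be well defined.
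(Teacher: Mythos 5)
Your overall strategy is sound and is in essence a streamlined version of the paper's argument: the paper proves full $\Gamma$-convergence of the \emph{constrained} energies $\cEpconnepsn$ to the unconstrained $\cEpinfty$ (Proposition~\ref{prop:IllPosed:Hard:GamConv}), constructing a recovery sequence for arbitrary $v$ by overwriting an unconstrained recovery sequence on $Z_n$, and then invokes Theorem~\ref{thm:Prelim:TLp:Conmin}; you instead exhibit one explicit competitor (constant off the labels) to show $\cEpnepsn(u_n)\to 0$, and then you only need the compactness and liminf halves of Proposition~\ref{prop:Prelim:TLp:GamConvEn}. Since constants minimize $\cEpinfty$, this shortcut legitimately yields the same conclusion, and the computational core (labeled pairs contribute $O\bigl(|\Gamma_n| \cdot n/(n^2\eps_n^p)\bigr)$, controlled by degree bounds) is the same as in the paper's recovery-sequence estimate. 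The truncation argument for $\|u_n\|_{\Linfty}\leq\|g\|_{\Linfty}$ matches what the paper imports from \cite{slepcev19}.

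There is, however, one concrete gap: the claim that Bernstein plus a union bound under $n\eps_n^d\gg\log n$ gives $|\Gamma_n|\leq Cn\beta_n$ almost surely for large $n$. Concentration of $|\Gamma_n|$ at the scale of its mean $\approx n\beta_n\mu(\wO)$ requires $n\beta_n\gg\log n$, which is \emph{not} among the hypotheses; the theorem permits, e.g., $\beta_n\sim 1/n$, in which case $|\Gamma_n|$ is of Poisson type with bounded mean and exceeds $Cn\beta_n$ infinitely often almost surely, so that step fails as stated. The repair is exactly the paper's device: bound $|\Gamma_n|\leq a_n n\eps_n^p$ (equivalently $|\Gamma_n|\leq C(n\beta_n+\log n)$) with $a_n\to 0$ chosen so that $a_n\gg\max\{\beta_n/\eps_n^p,\ \log n/(n\eps_n^p)\}$, for which Bernstein with deviation $t=\tfrac12 a_n n\eps_n^p$ gives a summable failure probability precisely because $n\eps_n^p\gg\log n$, and Borel--Cantelli concludes. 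After this correction your labeled--unlabeled and labeled--labeled terms become $O\bigl(\beta_n/\eps_n^p+\log n/(n\eps_n^p)\bigr)$ and $O\bigl(\beta_n+\log n/n\bigr)$, both vanishing under \eqref{eq:MainRes:illposed:scaling1}. Relatedly, your closing explanation of the role of $n\eps_n^p\gg\log(n)$ (``enough labels are present for $v_n$ to be well defined'') is off the mark: $v_n$ is well defined even if $\Gamma_n=\emptyset$; that hypothesis is needed exactly to control $|\Gamma_n|$ in the low-label regime described above.
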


\begin{remark}
Precompactness above is with respect to the $\TLp$ topology (see Appendix~\ref{sec:app:TLpConv}) which is a topology that allows us to define a discrete-to-continuum notion of convergence.
The convergence to a constant is both with respect to $\TLp$ and in $\Lp(\mu_n)$, where $\mu_n=\frac{1}{n}\sum_{x\in\Omega_n} \delta_x$ is the empirical measure and we say $u_n:\Omega_n\to \bbR$ converges to $u\in \Ck{0}(\Omega)$ in $\Lp(\mu_n)$ if $\frac{1}{n} \sum_{x\in \Omega_n} |u_n(x)-u(x)|^p \to 0$ (the $\Lp(\mu_n)$ convergence can only be defined when $u$ is continuous, the $\TLp$ convergence is defined for all $u\in \Lp(\mu)$). 
\end{remark}

In the assumptions we have an upper bound on $\beta_n$, which in light of the well-posedness result is natural, and lower bounds on $\eps_n$.
Let us remark on the latter conditions.

\begin{remark}
We make the following remarks on the scaling of $\eps_n$.
\begin{enumerate}
\item The last scaling assumption in~\eqref{eq:MainRes:illposed:scaling1} implies that $\eps_n$ is much greater than the connectivity radius which scales as $\l\frac{\log(n)}{n}\r^{\frac1d}$ (for all $d\geq 1$)~\cite{penrose03}.
The bound enters the analysis through the $\infty$-Wasserstein distance between the empirical measure $\mu_n$ and the data generating measure $\mu$ (where $x_i\iid\mu$), in particular, $\eps_n\gg\dWinfty(\mu_n,\mu)$.
Although there is gap between the scaling of the $\infty$-Wasserstein distance and the graph connectivity when $d=2$ in this setting we can close the gap and treat $\dWinfty(\mu_n,\mu)\geq \eps_n\gg \sqrt{\frac{\log n}{n}}$ by introducing an intermediate measure between $\mu_n$ and $\mu$.
\item The requirement that $\frac{n\eps_n^p}{\log n} \gg 1$ is upto a logarithmic factor the same upper bound on $\eps_n$ that was needed in~\cite{slepcev19} for the ill-posedness result with finite number of labels.
In particular,~\cite[Theorem 2.1(i)]{slepcev19} showed minimizers of $\cEpn$ subject to finite constraints (i.e. $|Z_n|=N$) are well-posed when $n\eps_n^p\ll 1$.
Hence, if $n\eps_n^p\ll 1$ we expect to achieve a well-posed limit regardless of the rate at which $|Z_n|\to+\infty$.
We note that if $\frac{n\eps_n^d}{\log n}\gg 1$ then $\frac{n\eps_n^p}{\log n} \gg 1$ for all $p\leq d$, hence the condition $\frac{n\eps_n^p}{\log n} \gg 1$ only becomes relevant when $p>d$.
\end{enumerate}
\end{remark}

For {\bf Model 2} we have the analogous result.

\begin{theorem}[{\bf Model 2, Ill-posed}]
\label{thm:MainRes:illposed:illposed2}
Assume $p>1$, {\bf (A1-3)} and consider {\bf Model 2}. Let $u_n:\Omega_n\to \bbR$ be the solution of~\eqref{eq:MainRes:illposed:Lp}.
Further, we assume that $\beta=\beta_n\to 0^+$, $\eps=\eps_n\to 0^+$ and $\delta=\delta_n\to 0^+$ satisfy
\[ \beta_n\delta_n\ll \eps_n^p, \quad n\eps_n^p\gg\log(n) \quad \text{and} \quad n \eps_n^d \gg \log(n) \]
Then, with probability one, the set $\{u_n\}_{n\in\bbN}$ is pre-compact and any convergent subsequence converges to a constant.
\end{theorem}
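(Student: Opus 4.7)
The plan is the variational analog of the Model~1 ill-posedness argument (Theorem~\ref{thm:MainRes:illposed:illposed1}): construct an explicit admissible competitor $v_n$ whose $p$-Dirichlet energy vanishes in the limit, and then combine the maximum principle, $\TLp$ pre-compactness, and the $\Gamma$-liminf inequality for the unconstrained $p$-Dirichlet energy to force any subsequential limit to be a constant.

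First I would fix any $c_0\in\bbR$ (e.g.\ $c_0=g(x_1)$) and define the competitor $v_n:\Omega_n\to\bbR$ by $v_n(x)=g(x)$ for $x\in\Gamma_n$ and $v_n(x)=c_0$ otherwise. Admissibility and minimality give $\cEpneps(u_n)\leq \cEpneps(v_n)$, so it suffices to control $\cEpneps(v_n)$. Splitting the pair sum, unlabeled-to-unlabeled pairs contribute zero. For labeled-to-labeled pairs, the Lipschitz bound $|g(x)-g(y)|^p\leq \|g\|_{\Lip}^p |x-y|^p\leq C\eps^p$ on the support of $\eta_\eps$ cancels the $\eps^{-p}$ prefactor, and the expectation of the remaining sum $\frac{1}{n^2}\sum_{x,y\in\Gamma_n}\eta_\eps(|x-y|)$ is bounded by $C\beta^2 \int_{\partial_\delta\Omega}\int_{\partial_\delta\Omega}\eta_\eps(|x-y|)\rho(x)\rho(y)\,dx\,dy\lesssim \beta^2\delta$. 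For mixed pairs, $|g(x)-c_0|^p\leq (2\|g\|_\infty)^p$ and $\E\sum_{y\in\Omega_n}\eta_\eps(|x-y|)\leq Cn$, so the expected contribution is bounded by $\frac{C\,\E|\Gamma_n|}{n\eps^p}\lesssim \frac{\beta\delta}{\eps^p}$, which tends to zero by the assumption $\beta_n\delta_n\ll\eps_n^p$. Standard concentration (conditioning on the positions $\{x_i\}$ and applying Bernstein-type bounds to the independent Bernoulli labels, using the regularity provided by $n\eps^d\gg\log n$) upgrades this to $\cEpneps(u_n)\to 0$ almost surely.

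For pre-compactness and identification of the limit, the maximum principle for the constrained $p$-Dirichlet problem gives $\|u_n\|_{\Linfty(\mu_n)}\leq \|g\|_\infty$ almost surely. Under the scaling assumptions $n\eps^d\gg\log n$ and $n\eps^p\gg\log n$, the standard $\TLp$ pre-compactness result (Appendix~\ref{sec:app:TLpConv}) extracts a subsequence with $u_{n_k}\toTLp u$ for some $u\in\Lp(\Omega)$. The $\Gamma$-liminf inequality for the unconstrained $p$-Dirichlet energy (as in~\cite{slepcev19,garciatrillos16}) yields
\begin{equation*}
\frac{\sigma_\eta}{p}\int_\Omega \rho^2|\nabla u|^p\,dx \leq \liminf_{k\to\infty}\cEpneps(u_{n_k}) = 0,
\end{equation*}
so $u\in\Wkp{1}{p}(\Omega)$ with $\nabla u\equiv 0$; since $\Omega$ is connected by (A1), $u$ is constant.

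The main technical obstacle will be the cross-pair concentration bound. In Model~2 the labeled points are concentrated in a thin shell of measure $\sim\delta$, so for an unlabeled point $x$ the number of labeled neighbors has mean $\sim n\beta\delta\eps^{d-1}$ but proportionally large fluctuations, and one has to decouple the randomness of the positions $\{x_i\}$ from that of the Bernoulli labels. The hypothesis $n\eps^d\gg\log n$ is what controls the graph degrees uniformly, while $n\eps^p\gg\log n$ rules out the $p$-capacity regime ($n\eps^p\ll 1$ in~\cite{slepcev19}) in which constraints might survive in the limit and interferes with the upper bound for $\cEpneps(v_n)$.
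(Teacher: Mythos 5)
Your proposal is correct, and it reaches the conclusion by a more direct route than the paper. The paper proves full $\Gamma$-convergence of the \emph{constrained} energies $\cEpconnepsn$ to the unconstrained continuum energy $\cEpinfty$ (Proposition~\ref{prop:IllPosed:Hard:GamConv}): the recovery sequence for an arbitrary limit $v$ is built by taking an unconstrained recovery sequence and overwriting its values by $g$ on $\Gamma_n$, and the induced energy change is controlled by $C|Z_n|/(n\eps_n^p)$; the theorem then follows from the fundamental theorem of $\Gamma$-convergence (Theorem~\ref{thm:Prelim:TLp:Conmin}) together with the $\Linfty$ bound on minimizers. You instead test the minimizer against the single competitor that equals $g$ on $\Gamma_n$ and a constant elsewhere, conclude $\cEpnepsn(u_n)\to 0$ almost surely, and then use only the compactness statement and the liminf half of the unconstrained $\Gamma$-convergence (Proposition~\ref{prop:Prelim:TLp:GamConvEn}) to force any $\TLp$ cluster point to have vanishing Dirichlet energy, hence to be constant on the connected domain. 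Both arguments rest on exactly the same quantitative ingredients: the almost-sure bound $|Z_n|\leq a_n n\eps_n^p$ with $a_n\to 0$ (Bernstein plus Borel--Cantelli, which is precisely where $n\eps_n^p\gg\log n$ enters, together with $\beta_n\delta_n\ll\eps_n^p$), the uniform degree bound $\frac1n\sum_j W_{ij}\leq C$ (where $n\eps_n^d\gg\log n$ enters), and the truncation/maximum-principle bound $\|u_n\|_{\Linfty(\mu_n)}\leq\|g\|_{\Linfty}$. What the paper's route buys is the stronger structural statement that the constrained functionals $\Gamma$-converge to the unconstrained limit (the precise sense in which labels are ``forgotten''), which is also reused for the soft-constraint remark; your route is shorter, avoids constructing recovery sequences for general $v$, but yields only the degeneracy of minimizers. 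Your handling of the cross-pair term is sound as written (a crude bound $C|Z_n|/(n\eps_n^p)$ plus the degree bound suffices; the sharper $\beta^2\delta$ computation for labeled--labeled pairs is not even needed), and the constant $\sigma_\eta/p$ versus $\sigma_\eta$ in the continuum energy is immaterial to the conclusion.
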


\begin{remark}
Theorems~\ref{thm:MainRes:illposed:illposed1} and~\ref{thm:MainRes:illposed:illposed2} remains true if $g$ is continuous and $\Omega$ has a Lipschitz boundary (i.e. we do not need $g\in \Ck{3}(\bar{\Omega})$ or $\Omega$, $\wO$ to have $\Ck{3}$ boundaries).
\end{remark}

The proof of both theorems are given in Section~\ref{sec:IllPosed}, here we quickly sketch out the strategy.
We define
\[ \cEpconneps(w) := \lb \begin{array}{ll} \frac{1}{n^2\eps^2}\sum_{x,y\in \Omega_n} \eta_{\eps}(|x-y|)|w(x) - w(y)|^p & \text{if } w(x) = g(x) \text{ for all } x\in \Gamma_n \\ +\infty & \text{otherwise} \end{array} \rd \]
and
\begin{equation} \label{eq:MainRes:illposed:cEpinfty}
\cEpinfty(w) = \sigma_\eta \int_\Omega |\nabla w(x)|^p \rho^2(x) \, \dd x.
\end{equation}
 We show that $\Glim_{n\to\infty} \cEpconnepsn = \cEpinfty$ in $TL^p$ topology, that is that the graph functional with constraints $\Gamma$ converges to the unconstrained continuum functional. 
This is the mathematical representation of the fact that labels are ``forgotten'' as $n \to \infty$. 
We then apply the fundamental theorem of $\Gamma$-convergence (see Theorem~\ref{thm:Prelim:TLp:Conmin}) which states that if minimizers $u_n$ of $\cEpconnepsn$ are precompact and $\Glim_{n\to\infty} \cEpconnepsn = \cEpinfty$ then any cluster point of $\{u_n\}_{n\in\bbN}$ is a minimizer of $\cEpinfty$.
Since $\cEpinfty$ is minimized by constant functions then we are done. 
The topology here is the $\TLp$ topology (see Section~\ref{sec:app:TLpConv}) and the $\Gamma$-convergence and compactness property are consequences of the main result in~\cite{garciatrillos16} where the unconstrained energies were considered with $p=1$ (restated for convenience in Proposition~\ref{prop:Prelim:TLp:GamConvEn}).

\begin{remark}
As in Remark~\ref{rem:MainRes:wellposed:Soft} one can adapt the proof to the soft constraint problem.
We define 
\[ \min_{w:\Omega_n\to \R} \cEppenneps(w):= \frac{1}{n^2\eps^p}\sum_{x,y\in \Omega_n} \eta_{\eps}(|x-y|)|w(x) - w(y)|^p + \frac{\lambda}{|\Gamma_n|}\sum_{y\in \Gamma_n} |w(y) - g(y)|^2. \]
In order to follow the proof strategy of Theorems~\ref{thm:MainRes:illposed:illposed1} and~\ref{thm:MainRes:illposed:illposed2} we are required to show that the compactness property holds and $\Glim_{n\to\infty}\cEppennepsn = \cEpinfty$.
Using the $\Gamma$-convergence and compactness property result for the constrained functionals $\cEpconnepsn$ and the unconstrained functionals $\cEpneps$, together with the bound $\cEpneps(w)\leq\cEppenneps(w)\leq \cEpconneps(w)$, implies the result.
\end{remark}

\section{The Well-Posed Case} \label{sec:WellPosed}

We give here the proofs of the well-posedness results, Theorems \ref{thm:wellposed1} and \ref{thm:wellposed2}. In Section \ref{subsec:WellPosed:Pointwise} we prove pointwise consistency estimates for graph Laplacians. Section \ref{sec:graph} proves basic estimates on random geometric graphs.
In Section \ref{subsec:WellPosed:RWBounds} we use random walks on random geometric graphs and martingale methods to prove boundary estimates for the Laplacian learning problems, showing the boundary conditions are attained continuously. Finally, in  Section~\ref{sec:wellposedproofs} we use the maximum principle to extend the boundary estimates to the entire domain, completing the proof of our main results. 

In this section, $C$ and $c$ denote constants that can change from line to line, and depend on $d$, $\rho$, and $\Omega$. We always take $C\geq 1$ to be large constant and $0 < c <1$ to represent small constants.

\subsection{Pointwise Convergence of the Graph Laplacian} \label{subsec:WellPosed:Pointwise}

In this section we derive the pointwise consistency of the graph Laplacian, $\L_{n,\eps}$ defined by~\eqref{eq:GL}, to the continuum Laplacian $\cL$, defined below. 
While this problem has been studied before~\cite{hein05,singer06,calder18a}, the novelty here is that we need and derive precise error estimates  near the boundary. Namely when the boundary is within the bandwidth of the kernel used to define graph Laplacian, we identify the correction terms needed. 
We prove the result by using a non-local continuum intermediary functional $\cL_{\eps}$. We define
\begin{align}
\cL_{\eps}u(x) & = \frac{2}{\eps^2} \int_{\Omega} \eta_{\eps}(|x-y|) \l u(x)-u(y)\r \rho(y) \, \dd y \label{eq:WellPosed:Pointwise:NLL} \\
\cL u(x) & = -\frac{\sigma_\eta}{\rho(x)} \Div \l \rho^2 \nabla u\r(x) \label{eq:WellPosed:Pointwise:ContL}
\end{align}
where $\sigma_\eta$ is the constant defined in Assumption {\bf (A3)}. We start by recalling the convergence of the graph Laplacian to the non-local Laplacian. 
\begin{theorem}
\label{thm:WellPosed:Pointwise:GraphtoNLLap}
\cite[Theorem 5]{calder18a}
Assume {\bf (A1-3)}. Then, there exists $C>c>0$ such that for any $\eps\leq \vartheta\leq \frac{1}{\eps}$ we have
\[ \bbP\l \lda \L_{n,\eps}\varphi - \cL_\eps\varphi\lfloor_{\Omega_n}\rda_{\Linfty(\mu_n)} \leq C\vartheta\|\varphi\|_{\Ck{3}(\overline{\Omega})},\, \forall \varphi\in \Ck{3}(\overline{\Omega})\r \geq 1 - C ne^{-cn\eps^{d+2}\vartheta^2}. \]
\end{theorem}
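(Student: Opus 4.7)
The plan is to decompose the error $\L_{n,\eps}\varphi(x_i) - \cL_\eps\varphi(x_i)$ at each $x_i \in \Omega_n$ into a deterministic bias and a sum of i.i.d.~summands, control the i.i.d.~sum with Bernstein's inequality, and then remove the quantifier over $\varphi \in \Ck{3}(\overline{\Omega})$ via a local Taylor argument. Condition on $x_i$, so that $\{x_j\}_{j\neq i}$ are i.i.d.~with density $\rho$, and write
\[ \L_{n,\eps}\varphi(x_i) - \cL_\eps\varphi(x_i) = \bigl(\L_{n,\eps}\varphi(x_i) - \E[\L_{n,\eps}\varphi(x_i) \mid x_i]\bigr) + \bigl(\E[\L_{n,\eps}\varphi(x_i) \mid x_i] - \cL_\eps\varphi(x_i)\bigr). \]
The second (bias) term is a deterministic $O(1/n)$ effect from the $(n-1)/n$ normalization in replacing a sum by an expectation; it is bounded by $O(\tfrac{1}{n}\|\varphi\|_{\Ck{2}})$ and is absorbed into $O(\vartheta\|\varphi\|_{\Ck{3}})$ since the advertised tail is vacuous for $\vartheta \lesssim 1/n$.

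For the fluctuation, I would apply Bernstein's inequality to the independent summands
\[ \xi_j := \frac{1}{n\eps^2}\eta_\eps(|x_i - x_j|)\bigl(\varphi(x_i) - \varphi(x_j)\bigr), \qquad j\neq i. \]
Each $\xi_j$ is bounded by $M \lesssim \|\varphi\|_{\Ck{1}}/(n\eps^{d+1})$, using $\eta_\eps \lesssim \eps^{-d}$, $\mathrm{supp}(\eta_\eps) \subset B(0,2\eps)$, and $|\varphi(x_i) - \varphi(x_j)| \lesssim \eps\|\varphi\|_{\Ck{1}}$ on that support. Since $\P(|x_i - x_j|\leq 2\eps) \lesssim \eps^d$, the conditional second moment is $\E[\xi_j^2\mid x_i] \lesssim \|\varphi\|_{\Ck{1}}^2/(n^2\eps^{d+2})$, so the total variance is $\sigma^2 \lesssim \|\varphi\|_{\Ck{1}}^2/(n\eps^{d+2})$. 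Bernstein then yields, for $\vartheta \leq 1/\eps$,
\[ \P\Bigl(\bigl|\L_{n,\eps}\varphi(x_i) - \E[\L_{n,\eps}\varphi(x_i) \mid x_i]\bigr| \geq C\vartheta\|\varphi\|_{\Ck{1}} \,\Bigm|\, x_i \Bigr) \leq 2\exp\bigl(-cn\eps^{d+2}\vartheta^2\bigr). \]

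The main obstacle is upgrading this single-$\varphi$ estimate to one uniform over the infinite-dimensional ball in $\Ck{3}(\overline{\Omega})$. My approach is to exploit the $\eps$-locality of both operators: for $y\in B(x_i,2\eps)$ Taylor expand $\varphi(y) = P_i(y) + R_i(y)$, where $P_i$ is the degree-$2$ Taylor polynomial of $\varphi$ at $x_i$ and $|R_i(y)| \leq C\|\varphi\|_{\Ck{3}}|y-x_i|^3$. A purely deterministic estimate using $|R_i(y)| \lesssim \eps^3\|\varphi\|_{\Ck{3}}$ on the kernel support, together with the high-probability bound $\#\{j : |x_i - x_j|\leq 2\eps\} \lesssim n\eps^d$ (by Chernoff, with failure probability $Cne^{-cn\eps^d}$ subsumed by the advertised tail whenever $\vartheta\leq 1/\eps$), gives
\[ \bigl|(\L_{n,\eps} - \cL_\eps)R_i(x_i)\bigr| \leq C\eps\|\varphi\|_{\Ck{3}} \leq C\vartheta\|\varphi\|_{\Ck{3}}. \]
For the polynomial part, by linearity the error on $P_i$ decomposes into $O(1)$ contributions on the monomials $q_\alpha(y) = (y-x_i)^\alpha$ with $|\alpha|\leq 2$, each weighted by $\partial^\alpha\varphi(x_i)/\alpha!$ (bounded by $\|\varphi\|_{\Ck{2}}$); for each such fixed monomial, the Bernstein estimate above applies with $\|q_\alpha\|_{\Ck{1}(B(x_i,2\eps))} \lesssim \eps^{|\alpha|-1}$ folded into the bound, together with a direct moment computation for the corresponding bias.

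To conclude, I would union bound over the $n$ base points $x_i \in \Omega_n$ and over the $O(1)$ monomials $\alpha$, retaining the advertised probability $1 - Cne^{-cn\eps^{d+2}\vartheta^2}$. The hardest conceptual step is the uniform-in-$\varphi$ reduction: without the localization-plus-linearity trick one would naively need a covering-net argument in the infinite-dimensional $\Ck{3}$-ball, but the bandwidth-scale locality of both operators lets one replace $\varphi$ by a Taylor polynomial with finitely many coefficients per base point, and it is this reduction that makes the whole argument tractable.
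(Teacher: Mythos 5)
Your proof is correct, and it is essentially \emph{the} proof of this statement: the paper does not prove Theorem~\ref{thm:WellPosed:Pointwise:GraphtoNLLap} itself but imports it from \cite[Theorem 5]{calder18a}, and the argument there follows the same route as yours --- conditional Bernstein/Chernoff concentration for the local statistics at each vertex, a degree-two Taylor polynomial plus cubic remainder so that uniformity over the $\Ck{3}$ ball reduces to finitely many monomials $(y-x_i)^\alpha$, $|\alpha|\le 2$, per base point, a deterministic bound on the remainder via a neighbor count, and a union bound over the $n$ vertices, with the restriction $\vartheta\le 1/\eps$ entering exactly where you use it, to absorb the Bernstein correction term. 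Two small remarks. Near $\partial\Omega$ the first-order term in the expectation does not cancel, so the bias from replacing the empirical sum by its mean is of size $O\bigl(\tfrac{1}{n\eps}\|\varphi\|_{\Ck{1}}\bigr)$ rather than $O\bigl(\tfrac{1}{n}\|\varphi\|_{\Ck{2}}\bigr)$; it is still dominated by $\vartheta$ whenever the stated tail is non-vacuous, so your absorption argument goes through. Also, with the constants exactly as printed in this paper one has $\E[\L_{n,\eps}\varphi(x_i)\mid x_i]=\tfrac{n-1}{2n}\,\cL_\eps\varphi(x_i)$, since the $\tfrac{1}{n\eps^2}$ normalization in \eqref{eq:GL} does not match the $\tfrac{2}{\eps^2}$ in \eqref{eq:WellPosed:Pointwise:NLL}; your claim that the bias is an $O(1/n)$ effect therefore presupposes the matched normalization used in \cite{calder18a}. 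That is a constant-factor inconsistency in the transcription of the statement here, not a gap in your reasoning.
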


We let $\bm{n}(x)$ be the outward unit normal vector to $\Omega$ at $x\in\partial\Omega$.  Since $\partial \Omega$ is $\Ck{2}$, there exists a $\Ck{2}$ extension of $\bm{n}$ to $\Omega$ such that for some $r>0$,  $\bm{n}(x) = \bm{n}(y_x)$ whenever $\delta_x:=\dist(x,\partial \Omega)\leq r$, where $y_x=\argmin_{y\in \partial \Omega}|x-y|$.
For convenience we write $\frac{\partial}{\partial \bm{n}}:= \bm{n} \cdot \nabla$ for the partial derivative in the direction $\bm{n}$.
We define
\begin{equation} \label{eq:WellPosed:Pointwise:gamma}
\gamma_\eps(x) = \frac{1}{\eps} \int_{B(x,2\eps)\cap\Omega} \eta_\eps(|x-y|) (x-y)\cdot \bm{n}(x) \, \dd y.
\end{equation}
Note that $\gamma_\eps(x)=0$ for $\delta_x\geq 2\eps$. Define $\sigma_1$ and $\sigma_2$ by, for $t\geq 0$,
\begin{align}
\sigma_1(t) & = \int_{B(0,2)\cap\{z_d>-t\}} \eta(|z|) z_1^2 \, \dd z \label{eq:WellPosed:Pointwise:sigma1} \\
\sigma_2(t) & = \int_{B(0,2)\cap\{z_d>-t\}} \eta(|z|) (z_d^2 - z_1^2) \, \dd z. \label{eq:WellPosed:Pointwise:sigma2}
\end{align}
Notice that $\sigma_i$ are Lipschitz continuous, and $\sigma_1(t) =\sigma_\eta$, $\sigma_2(t) = 0$ for all $t\geq 2$.

\begin{theorem}
\label{thm:WellPosed:Pointwise:NLLaptoLLap}
Assume {\bf (A1,3)}. Then there exists $C>0$ such that for any $\varphi\in \Ck{3}(\overline{\Omega})$
\begin{align*}
& \sup_{x\in \Omega} \la \cL_\eps \varphi(x) - \frac{\sigma_1\l\frac{\delta_x}{\eps}\r}{\sigma_\eta} \cL \varphi(x) - \frac{2\rho(x)\gamma_\eps(x)}{\eps} \frac{\partial \varphi}{\partial \bm{n}}(x) + \frac{\sigma_2\l\frac{\delta_x}{\eps}\r}{\rho(x)} \frac{\partial}{\partial \bm{n}} \l\rho^2 \frac{\partial \varphi}{\partial \bm{n}}\r(x) \ra  \leq C \eps \|\varphi\|_{\Ck{3}(\overline{\Omega})}.
\end{align*}
\end{theorem}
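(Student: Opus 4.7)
My strategy is to perform a Taylor expansion after the rescaling $z = (y-x)/\eps$. Extending $\rho$ and $\varphi$ to $\Ck{2}$ and $\Ck{3}$ functions on a neighborhood of $\overline{\Omega}$, the integral defining $\cL_\eps$ becomes
\[
\cL_\eps\varphi(x) = \frac{2}{\eps^2}\int_{D_x}\eta(|z|)\bigl[\varphi(x) - \varphi(x+\eps z)\bigr]\rho(x+\eps z)\,\dd z,
\]
where $D_x = \{z \in B(0,2) : x + \eps z \in \Omega\}$. Expanding $\varphi$ to second order and $\rho$ to first order, the Taylor remainders are bounded by $C\eps^3|z|^3\|\varphi\|_{\Ck{3}}\|\rho\|_{\Ck{2}}$ and contribute $O(\eps\|\varphi\|_{\Ck{3}})$ after the prefactor $\eps^{-2}$. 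What remains is a first-order moment $\int_{D_x}\eta(|z|) z\,\dd z$ entering with the singular prefactor $\eps^{-1}$, together with second-order moments $\int_{D_x}\eta(|z|) z_i z_j\,\dd z$ from the $D^2\varphi$ term and from the cross product between $\nabla\varphi$ and $\nabla\rho$.

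To evaluate the moments I would set up coordinates so that $\bm{n}(x) = \e_d$ and the nearest boundary point is $y_x = x + \delta_x \e_d$, and represent $\partial\Omega$ near $y_x$ as a graph $w_d = \psi(w')$ over its tangent plane with $\psi(0) = 0$, $\nabla\psi(0) = 0$, and $\psi(w') = \tfrac{1}{2}(w')^T D^2\psi(0) w' + O(|w'|^3)$. Then $D_x = \{z \in B(0,2) : z_d < \delta_x/\eps + \psi(\eps z')/\eps\}$ differs from the flat half-space $D_x^\flat = \{z \in B(0,2) : z_d < \delta_x/\eps\}$ only in a layer of rescaled thickness $O(\eps|z'|^2)$. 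The normal component of the first-order moment is exactly $-\gamma_\eps(x)$ by applying the change of variables to the definition of $\gamma_\eps$. Via the global reflection $z \mapsto -z$, which sends $D_x^\flat$ to $B(0,2)\cap\{z_d > -\delta_x/\eps\}$ and preserves $\eta(|z|) z_i^2$, the diagonal second-order moments over $D_x^\flat$ equal $\sigma_1(\delta_x/\eps)$ for $i<d$ and $\sigma_1(\delta_x/\eps)+\sigma_2(\delta_x/\eps)$ for $i=d$; off-diagonal moments over $D_x^\flat$ vanish by the reflection $z_i \mapsto -z_i$. Passing from $D_x^\flat$ to $D_x$ in the second-order moments introduces only $O(\eps)$ error, since the integrands are bounded and the symmetric difference has rescaled measure $O(\eps)$.

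Assembling these computations, the first-order term produces $\tfrac{2\rho(x)\gamma_\eps(x)}{\eps}\tfrac{\partial\varphi}{\partial\bm{n}}(x)$, and the second-order terms combine to
\[
-\sigma_1\bigl[2\nabla\rho\cdot\nabla\varphi + \rho\Delta\varphi\bigr](x) - \sigma_2\bigl[2\tfrac{\partial\rho}{\partial\bm{n}}\tfrac{\partial\varphi}{\partial\bm{n}} + \rho\tfrac{\partial^2\varphi}{\partial\bm{n}^2}\bigr](x),
\]
with $\sigma_i = \sigma_i(\delta_x/\eps)$. Using the product rule, the first bracket equals $\tfrac{1}{\rho}\Div(\rho^2\nabla\varphi) = -\cL\varphi/\sigma_\eta$ and the second bracket equals $\tfrac{1}{\rho}\tfrac{\partial}{\partial\bm{n}}(\rho^2\tfrac{\partial\varphi}{\partial\bm{n}})$, yielding exactly the three correction terms in the statement with overall error $O(\eps\|\varphi\|_{\Ck{3}})$.

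The main obstacle is controlling the \emph{tangential} first-order moment $\int_{D_x}\eta(|z|) z_i\,\dd z$ for $i<d$: because of the $\eps^{-1}$ prefactor, a naive $O(\eps)$ bound from $|D_x \triangle D_x^\flat| = O(\eps)$ would leave an unacceptable $O(1)$ residual. This moment vanishes on $D_x^\flat$ by the reflection $z_i \mapsto -z_i$, so the whole contribution equals $\int_{D_x\triangle D_x^\flat}\eta(|z|) z_i\,\dd z$. Using the quadratic approximation of $\psi$, its leading $O(\eps)$ part is proportional to $\int_{B'}\eta(\sqrt{|z'|^2 + (\delta_x/\eps)^2})\, z_i\, (z')^T D^2\psi(0) z'\,\dd z'$, which vanishes identically because each monomial $z_i z_j z_k$ with $i,j,k<d$ is odd in some tangential variable while the radial weight and the integration domain are invariant under reflection in that variable. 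Only the cubic remainder of $\psi$ survives, producing an $O(\eps^2)$ tangential moment and hence the required $O(\eps)$ overall error.
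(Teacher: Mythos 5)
Your overall route is the same as the paper's: Taylor expansion after rescaling, the exact identification of the normal first moment with $\gamma_\eps(x)$, reflection symmetry to kill off-diagonal second moments and to identify the diagonal ones with $\sigma_1(\delta_x/\eps)$ and $\sigma_1+\sigma_2$, and the same algebra assembling $\frac{1}{\rho}\Div(\rho^2\nabla\varphi)$ and $\frac{1}{\rho}\frac{\partial}{\partial\bm{n}}(\rho^2\frac{\partial\varphi}{\partial\bm{n}})$; your error bookkeeping for the second moments and the Taylor remainders is correct. The one genuine gap is in the step you yourself single out as the main obstacle, the tangential first moment. You extract its ``leading $O(\eps)$ part'' by freezing $\eta$ at the flat interface $z_d=\delta_x/\eps$ and then invoke parity. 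Freezing $\eta$ on the thin layer tacitly uses continuity (in effect a modulus/Lipschitz bound) of $\eta$, but {\bf (A3)} only assumes $\eta$ is non-increasing, so jump kernels of indicator type are admissible; for such $\eta$ the replacement error is not obviously $O(\eps^2)$ (with the $\eps^{-1}$ prefactor you need exactly $O(\eps^2)$), and a naive bound by (layer thickness)$\times$(oscillation of $\eta$) does not close. One can in fact rescue the frozen-kernel step, but only by using both the finite total variation of the monotone $\eta$ and the fact that the layer thickness is $O(\eps|z'|^2)$ rather than $O(\eps)$, via a layer-cake/coarea estimate over the variation measure of $\eta$; none of this is in your sketch, so as written the key step is asserted rather than proved under the paper's hypotheses.

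Two clean repairs exist. Staying in your framework: do not freeze $\eta$; instead compare the true layer, of signed thickness $\psi(\eps z')/\eps$, with the layer of thickness equal to the even quadratic part $\frac{\eps}{2}(z')^{\top}D^2\psi(0)z'$. The integral of $\eta(|z|)z_i$ over the quadratic layer vanishes exactly under the antipodal map $z'\mapsto -z'$ (the thickness and $|z'|$ are even, $z_i$ is odd), with no regularity of $\eta$ needed, and the two layers differ in measure by $O(\eps^2)$, giving $|{\textstyle\int_{D_x\triangle D_x^\flat}}\eta(|z|)z_i\,\dd z|\le C\eps^2$. Alternatively, the paper's device: before Taylor expanding, replace $\Omega\cap B(x,2\eps)$ by the exactly tangentially symmetric quadratic super-level set $A$ of \eqref{eq:WellPosed:Pointwise:A}; the swap is absorbed into the full integrand, where $|\varphi(y)-\varphi(x)|\le C\eps\|\varphi\|_{\Ck{1}}$ and the discrepancy region has volume $\le C\eps^{d+2}$, so the cost is $O(\eps)$ and no first moment ever meets the $\eps^{-1}$ prefactor on an asymmetric domain. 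With either repair your argument is complete and yields the stated estimate.
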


\begin{remark}
\label{rem:WellPosed:Pointwise:AwayFromBoundary}
Since $\sigma_1(t) =\sigma_\eta$, $\sigma_2(t) = 0$ for all $t\geq 2$ then for $\delta_x\geq 2\eps$ we have $\gamma_\eps(x) = 0$, $\sigma_1(\delta_x/\eps) = \sigma_\eta$ and $\sigma_2(\delta_x/\eps)=0$ hence the theorem implies
\[ \la \cL_\eps \varphi(x) - \cL \varphi(x) \ra \leq C \eps \|\varphi\|_{\Ck{3}(\overline{\Omega})}. \]
which coincides with pointwise consistency results away from the boundary~\cite{hein05,singer06,calder18a}.
\end{remark}

\begin{proof}[Proof of Theorem~\ref{thm:WellPosed:Pointwise:NLLaptoLLap}.]
By Remark~\ref{rem:WellPosed:Pointwise:AwayFromBoundary} if $\delta_x\geq 2\eps$ then the boundary terms disappear and the proof simplifies, as in \cite[Theorem 5]{calder18a}; hence we only prove the case where $\delta_x\leq 2\eps$.
Fix $x\in\Omega$ with $\delta_x\leq 2\eps$.
After making an orthogonal change of coordinates and a translation, we may assume that $x=x_de_d$ (where $e_d=(0,0,\dots,1)$ is the $d$th standard basis vector), $y_x=0\in \partial \Omega$ and
\begin{equation} \label{eq:WellPosed:Pointwise:boundary}
B(x,2\eps)\cap \Omega = \lb y\in B(x,2\eps)\, : \, y_d > F(y_1,\dots,y_{d-1})\rb,
\end{equation}
where $F:\bbR^{d-1} \to  \bbR$ is $\Ck{3}$, with $\|F\|_{\Ck{3}}$ bounded uniformly over $\partial\Omega$, 
\begin{equation} \label{eq:WellPosed:Pointwise:F}
F(0)=0,\ \ \text{ and }\ \ \nabla F(0)=0.
\end{equation}
Here $x_d=\delta_x$, and $\bm{n}(0)=\bm{n}(x)=-e_d$.

For $y\in \bbR^d$ write $y=(\ty,y_d)$ where $\ty\in \bbR^{d-1}$.
First, note that
\begin{equation}\label{eq:WellPosed:Pointwise:gamma_bound}
\frac{1}{2}\ty^\top  A\ty- c\eps^3\leq F(\ty) \leq \frac{1}{2}\ty^\top  A\ty+ c\eps^3,
\end{equation}
where $A= \nabla ^2 F(0)$ and $c$ depends on $\|F\|_{\Ck{3}}$.
Let us write 
\begin{equation}\label{eq:WellPosed:Pointwise:A}
A:=\left\{ y\in B(x,2\eps) \, : \, y_d >\frac{1}{2}\ty^T  A\ty+ c\eps^3  \right\}
\end{equation}
and
\begin{equation}\label{eq:WellPosed:Pointwise:B}
B:=\left\{ y\in B(x,2\eps) \, : \, \frac{1}{2}\ty^T  A\ty- c\eps^3  < y_d \leq \frac{1}{2}\ty^T  A\ty+ c\eps^3  \right\}.
\end{equation}
Defining
\begin{equation}\label{eq:WellPosed:Pointwise:L2}
L\varphi(x) = \frac{2}{\eps^2} \int_{A}\eta_\eps\l |x-y|\r(\varphi(x) - \varphi(y)) \rho(y) \, \dd y,
\end{equation}
(note that $A\subset\Omega$) we have
\begin{align*}
\la \cL_\eps \varphi(x) - L\varphi(x)\ra & \leq \frac{2}{\eps^2} \int_{B(x,2\eps)\cap \Omega\setminus A}\eta_\eps\l |x-y|\r |\varphi(y) - \varphi(x)| \rho(y) \, \dd y \\
&\leq \frac{c K}{\eps^{d+1}} \int_{B(x,2\eps)\cap \Omega\setminus A}\, \dd y \\
&\leq \frac{c K}{\eps^{d+1}} \int_{B}\, \dd y = \frac{cK}{\eps^{d+1}}\Vol(B),
\end{align*}
where $K=\|\varphi\|_{\Ck{3}(\overline{\Omega})}$.
We easily compute $\Vol(B)\leq c\eps^{d+2}$ and so 
\begin{equation}\label{eq:WellPosed:Pointwise:bound1}
\la \cL_\eps \varphi(x) - L\varphi(x)\ra \leq cK\eps.
\end{equation}
We now Taylor expand to obtain
\begin{align*}
L\varphi(x)&= -\frac{2}{\eps^2} \int_{A}\eta_\eps\l |x-y| \r\Big(\nabla \varphi(x)\cdot (y-x) + \frac{1}{2}(y-x)^\top \nabla^2 \varphi(x)(y-x) + O(K\eps^3)\Big) \\
&\hspace{2cm} \l \rho(x) + \nabla \rho(x)\cdot (y-x)+O(\eps^2) \r \, \dd y\\
&=-\frac{2\rho(x)}{\eps^2} \int_A\eta_\eps\l |x-y| \r\nabla \varphi(x)\cdot (y-x)\, \dd y \\
&\hspace{2cm}-\frac{\rho(x)}{\eps^2}\int_A\eta_\eps\l |x-y| \r (y-x)^\top\nabla^2 \varphi(x)(y-x) \, \dd y \\
&\hspace{2cm}-\frac{2}{\eps^2}\int_A\eta_\eps\l |x-y| \r\nabla \varphi(x)\cdot (y-x) \nabla \rho(x)\cdot (y-x)\, \dd y + O(K\eps)\\
&=:E_1+E_2 +E_3 +O(K\eps).
\end{align*}

Note that $A$ is symmetric in $\ty$, i.e., $(\ty,y_d)\in A$ if and only if $(-\ty,y_d)\in A$.
Since the integrand in $E_1$ is odd in $\ty$, we have
\begin{align}
E_1&=-\frac{\rho(x)}{\eps^2} \frac{\partial \varphi}{\partial x_d}(x)\int_A\eta_\eps\l |x-y| \r(y_d-x_d)\, \dd y \notag \\
&= -\frac{2\rho(x)}{\eps^2} \frac{\partial \varphi}{\partial x_d}(x)\int_{B(x,2\eps)\cap \Omega}\eta_\eps\l |x-y| \r (y-x)\cdot e_d\, \dd y + O(K\eps) \notag \\
&= \frac{2\rho(x)}{\eps} \frac{\partial \varphi}{\partial x_d}(x)\gamma_\eps(x)+ O(K\eps). \label{eq:WellPosed:Pointwise:E1}
\end{align}

For $E_2$ we have
\begin{align*}
E_2 &= -\frac{\rho(x)}{\eps^2}\int_{B(x,2\eps)\cap\{y_d>0\}}\eta_\eps\l |x-y| \r(y-x)^\top\nabla^2 \varphi(x)(y-x) \, \dd y + O(K\eps) \\
&=-\rho(x)\int_{B(0,2)\cap\{z_d>-x_d/\eps\}}\eta\l |z| \r z^\top\nabla^2 \varphi(x)z \, \dd z + O(K\eps) \\
&=-\rho(x)\sum_{i,j=1}^d \frac{\partial^2 \varphi}{\partial x_i \partial x_j}(x)\int_{B(0,2)\cap\{z_d>-x_d/\eps\}}\eta\l |z| \r z_iz_j \, \dd z + O(K\eps).
\end{align*}
Any term with $i\neq j$ is odd and vanishes, so we obtain
\[ E_2= -\rho(x)\sum_{i=1}^d \frac{\partial^2 \varphi}{\partial x_i^2}(x)\int_{B(0,2)\cap\{z_d>-x_d/\eps\}}\eta\l |z| \r z_i^2 \, \dd z + O(K\eps), \]
and so we have
\begin{equation}\label{eq:WellPosed:Pointwise:E2}
E_2= -\rho(x) \l \sigma_1\l \tfrac{x_d}{\eps}\r  \Delta \varphi + \sigma_2\l \tfrac{x_d}{\eps}\r \frac{\partial^2 \varphi}{\partial x_d^2}\r+ O(K\eps).
\end{equation}

Finally, for $E_3$ we have
\begin{align}
E_3 &= -\frac{2}{\eps^2} \int_{B(x,2\eps)\cap\{y_d>0\}}\eta_\eps\l |x-y| \r \nabla \varphi(x)\cdot (y-x) \nabla \rho(x)\cdot (y-x) \, \dd y + O(K\eps) \notag \\
&=-2\int_{B(0,2)\cap\{z_d>-x_d/\eps\}}\eta\l |z| \r\nabla \varphi(x)\cdot z \nabla \rho(x)\cdot z \, \dd z + O(K\eps) \notag \\
&=-2\sum_{i,j=1}^d \frac{\partial \varphi}{\partial x_i}(x) \frac{\partial \rho}{\partial x_j}(x) \int_{B(0,2)\cap\{z_d>-x_d/\eps\}}\eta\l |z| \r z_iz_j \, \dd z + O(K\eps) \notag \\
&=-2\sum_{i=1}^d \frac{\partial \varphi}{\partial x_i}(x) \frac{\partial \rho}{\partial x_i}(x) \int_{B(0,2)\cap\{z_d>-x_d/\eps\}}\eta\l |z| \r z_i^2 \, \dd z + O(K\eps) \notag \\
&=-2\l \sigma_1\l \tfrac{x_d}{\eps} \r \nabla \varphi(x)\cdot \nabla \rho(x) + \sigma_2\l \tfrac{x_d}{\eps} \r \frac{\partial \varphi}{\partial x_d}(x) \frac{\partial \rho}{\partial x_d}(x)  \r + O(K\eps). \label{eq:WellPosed:Pointwise:E3}
\end{align}

Noting that $\bm{n}(x)=-e_d$ and $x_d=\delta_x$, we combine \eqref{eq:WellPosed:Pointwise:E1}, \eqref{eq:WellPosed:Pointwise:E2}, and \eqref{eq:WellPosed:Pointwise:E3} with \eqref{eq:WellPosed:Pointwise:bound1} to complete the proof.
\end{proof}

We can combine Theorems~\ref{thm:WellPosed:Pointwise:GraphtoNLLap} and~\ref{thm:WellPosed:Pointwise:NLLaptoLLap} to derive the pointwise consistency of graph Laplacians for sufficiently smooth functions on $\Omega$.

\begin{corollary}
\label{cor:WellPosed:Pointwise:GraphtoLLap}
Under the conditions of Theorem~\ref{thm:WellPosed:Pointwise:GraphtoNLLap} and Theorem~\ref{thm:WellPosed:Pointwise:NLLaptoLLap} there exists $C>c>0$ such that for any $\eps\leq\vartheta\leq\frac{1}{\eps}$
\begin{align*}
& \sup_{x\in\Omega_n} \la \L_{n,\eps}\varphi(x) - \frac{\sigma_1\l\frac{\delta_{x}}{\eps}\r}{\sigma_\eta} \cL \varphi(x) - \frac{2\rho(x)\gamma_\eps(x)}{\eps} \frac{\partial \varphi}{\partial \bm{n}}(x) + \frac{\sigma_2\l\frac{\delta_{x}}{\eps}\r}{\rho(x)} \frac{\partial}{\partial \bm{n}} \l\rho^2 \frac{\partial \varphi}{\partial \bm{n}}\r(x) \ra \leq C\|\varphi\|_{\Ck{3}(\overline{\Omega})}\vartheta
\end{align*}
for all $\varphi\in \Ck{3}(\overline{\Omega})$ with probability at least $1-Cne^{-cn\eps^{d+2}\vartheta^2}$.
\end{corollary}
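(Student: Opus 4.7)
The plan is an essentially immediate application of the triangle inequality combining the two preceding theorems, with care taken only to line up the hypotheses on $\vartheta$ and the probabilistic and deterministic parts of the estimates. Let $T\varphi(x)$ denote the target expression
\[
T\varphi(x) := \frac{\sigma_1(\delta_x/\eps)}{\sigma_\eta}\, \cL\varphi(x) + \frac{2\rho(x)\gamma_\eps(x)}{\eps}\,\frac{\partial \varphi}{\partial\bm{n}}(x) - \frac{\sigma_2(\delta_x/\eps)}{\rho(x)}\,\frac{\partial}{\partial\bm{n}}\!\left(\rho^2\,\frac{\partial\varphi}{\partial\bm{n}}\right)\!(x),
\]
so that the quantity to be estimated at each $x\in\Omega_n$ is $|\L_{n,\eps}\varphi(x)-T\varphi(x)|$.

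First, I would apply the triangle inequality
\[
|\L_{n,\eps}\varphi(x) - T\varphi(x)| \leq |\L_{n,\eps}\varphi(x) - \cL_\eps\varphi(x)| + |\cL_\eps\varphi(x) - T\varphi(x)|
\]
for every $x\in\Omega_n\subset\Omega$. Theorem~\ref{thm:WellPosed:Pointwise:NLLaptoLLap} bounds the second term deterministically by $C\eps\|\varphi\|_{\Ck{3}(\overline\Omega)}$ uniformly in $x\in\Omega$, hence in particular on $\Omega_n$. Theorem~\ref{thm:WellPosed:Pointwise:GraphtoNLLap}, applied with the same $\vartheta$ (which is in the admissible range $\eps\leq\vartheta\leq 1/\eps$), gives the probabilistic bound $|\L_{n,\eps}\varphi(x) - \cL_\eps\varphi(x)| \leq C\vartheta\|\varphi\|_{\Ck{3}(\overline\Omega)}$ uniformly in $x\in\Omega_n$ and simultaneously for all $\varphi\in\Ck{3}(\overline\Omega)$, on an event of probability at least $1-Cne^{-cn\eps^{d+2}\vartheta^2}$.

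Next, using the hypothesis $\eps\leq\vartheta$, I would absorb the $C\eps\|\varphi\|_{\Ck{3}}$ term from Theorem~\ref{thm:WellPosed:Pointwise:NLLaptoLLap} into the $C\vartheta\|\varphi\|_{\Ck{3}}$ term (enlarging the constant $C$ if necessary). Taking the supremum over $x\in\Omega_n$ on the good event then yields the stated bound on the same event, so the probability estimate $1-Cne^{-cn\eps^{d+2}\vartheta^2}$ is inherited directly from Theorem~\ref{thm:WellPosed:Pointwise:GraphtoNLLap}.

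There is no genuine obstacle here; the only mild care needed is that the uniformity over $\varphi\in\Ck{3}(\overline\Omega)$ in Theorem~\ref{thm:WellPosed:Pointwise:GraphtoNLLap} is preserved when the deterministic (and therefore $\varphi$-uniform) bound of Theorem~\ref{thm:WellPosed:Pointwise:NLLaptoLLap} is added, and that the constraint $\eps\leq\vartheta\leq 1/\eps$ matches in both theorems so the same choice of $\vartheta$ is legal throughout. No additional arguments (maximum principle, random walks, etc.) are required for this step.
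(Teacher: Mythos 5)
Your proof is correct and is exactly the argument the paper intends: the corollary is stated immediately after the remark that one "can combine" Theorems~\ref{thm:WellPosed:Pointwise:GraphtoNLLap} and~\ref{thm:WellPosed:Pointwise:NLLaptoLLap}, i.e.\ a triangle inequality splitting through $\cL_\eps\varphi$, with the deterministic $O(\eps)$ term absorbed into $C\vartheta$ using $\eps\leq\vartheta$ and the probability inherited from Theorem~\ref{thm:WellPosed:Pointwise:GraphtoNLLap}.
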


\subsection{Estimates on random graphs} \label{sec:graph}

We recall $x_1,\dots,x_n$ are \emph{i.i.d.}~random variables on $\Omega$ with $\Ck{2}$ density $\rho$, and $\Omega_n=\{x_1,\dots,x_n\}$. Let $z_1,z_2,\dots,z_n$ be \emph{i.i.d.}~Bernoulli random variables with parameter $\beta\in (0,1)$. For simplicity we write $z(x_i)=z_i$. For $x\in \Omega$ and $\wO \subset \Omega$, define 
\begin{equation}\label{eq:degree}
d_{n,\eps}(x) = \sum_{y \in \Omega_n}\eta_\eps(|y-x|),
\end{equation}
\begin{equation}\label{eq:prob}
p_{n,\eps}(x;\wO) = \sum_{y\in \Omega_n}\eta_\eps(|y-x|)\one_{z(y)=1}\one_{y\in \wO},
\end{equation}
and
\begin{equation}\label{eq:drift}
q_{n,\eps}(x) = \sum_{y\in \Omega_n}\eta_\eps(|y-x|)\one_{\delta_y \leq \delta_x-\tfrac{\eps}{2}},
\end{equation}
where we recall that $\delta_x=\dist(x,\partial \Omega)$.

The weights $w_{xy}=\eta_\eps(|y-x|)$ endow $\Omega_n$ with the structure of a \emph{graph}. We say the graph is \emph{connected} if for each $x,y\in \Omega_n$ there is a path $x=y_1,y_2,\dots,y_m=y$ with $y_i \in \Omega_n$ such that $\eta_\eps(|y_i-y_{i-1}|)>0$ for all $i$.

We now establish some basic estimates for $d_{n,\eps}$, $p_{n,\eps}$ and $q_{n,\eps}$ that will allow us to control random walks on the graph.
\begin{proposition}\label{prop:concentration1}
Let $0 \leq \vartheta \leq 1$ and assume {\bf (A1,3)} hold.  The event that 
\begin{equation}\label{eq:dest}
\left|\frac{1}{n}d_{n,\eps}(x) - \int_{\Omega\cap B(x,2\eps)}\eta_\eps(|y-x|) \rho(y) \, \dd y\right|\leq C\vartheta,
\end{equation}
holds for all $x\in \Omega_n$ and $\Omega_n$ is a connected graph, has probability at least $1-Cn\exp(-cn\eps^d\vartheta^2)$.
\end{proposition}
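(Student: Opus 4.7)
The proposition combines a uniform concentration estimate for the degree with connectivity of the random geometric graph; I would establish each piece separately and combine them by a union bound.

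For the concentration bound \eqref{eq:dest}, I would fix an index $i$, condition on $x_i$, and apply Bernstein's inequality to the conditional sum $S_i:=\sum_{j\ne i}\eta_\eps(|x_j-x_i|)$. The summands are conditionally i.i.d., bounded by $\|\eta\|_{\Linfty}/\eps^d$ and with second moment $\leq C/\eps^d$ (using $\eta_\eps^2\leq(\|\eta\|_{\Linfty}/\eps^d)\eta_\eps$ and $\int\eta_\eps\rho\,\dd y\leq C$). Writing $\mu_\eps(x):=\int_\Omega\eta_\eps(|y-x|)\rho(y)\,\dd y$, Bernstein yields
\[ \P\l\la S_i - (n-1)\mu_\eps(x_i)\ra > n\vartheta \,\Big|\, x_i \r \leq 2\exp\l-cn\eps^d\vartheta^2\r \qquad \text{for all } \vartheta\in(0,1]. \]
After adding back the diagonal term $\eta_\eps(0)/n$, replacing $(n-1)\mu_\eps/n$ by $\mu_\eps$, and absorbing both $O(1/(n\eps^d))$ corrections into $C\vartheta$ (legal in the nontrivial regime $n\eps^d\vartheta\gtrsim 1$), a union bound over $i=1,\ldots,n$ yields \eqref{eq:dest} with probability at least $1-2n\exp(-cn\eps^d\vartheta^2)$. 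Note that $\mu_\eps(x)=\int_{\Omega\cap B(x,2\eps)}\eta_\eps(|y-x|)\rho(y)\,\dd y$ by support considerations, matching the right-hand side of \eqref{eq:dest}.

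For connectivity I would cover $\bar\Omega$ by axis-parallel cubes $\{Q_k\}$ of side $\eps/(2\sqrt d)$. At most $C\eps^{-d}$ of them meet $\Omega$, and the $\Ck{3}$ regularity of $\partial\Omega$ (Lipschitz would suffice) guarantees $|Q_k\cap\Omega|\geq c\eps^d$ uniformly. Hence $\P(Q_k\cap\Omega_n=\emptyset)\leq(1-c\rho_{\min}\eps^d)^n\leq e^{-cn\eps^d}$, and a further union bound gives the event that every such cube is populated with probability at least $1-C\eps^{-d}e^{-cn\eps^d}$. On this event any two points of $\Omega_n$ lying in face-adjacent cubes are at distance at most $\eps\leq 2\eps$, hence joined by an edge; since $\bar\Omega$ is connected the cube adjacency graph is connected, and therefore so is $\Omega_n$.

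Combining the two bad events and using $\vartheta\leq 1$ together with the fact that the statement is only informative when $n\eps^d\vartheta^2\gtrsim\log n$ (so $\eps^{-d}\lesssim n$), the factor $\eps^{-d}e^{-cn\eps^d}$ is absorbed into $ne^{-cn\eps^d\vartheta^2}$ after adjusting constants, yielding the claimed probability bound. The only genuine subtlety is securing the uniform volume lower bound $|Q_k\cap\Omega|\geq c\eps^d$ for cubes touching $\partial\Omega$, which is where the boundary regularity enters; otherwise the argument is a textbook combination of Bernstein's inequality and a volumetric covering estimate.
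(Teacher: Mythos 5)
Your treatment of the degree estimate is correct and is essentially the paper's argument: the paper applies a Bernstein-type concentration inequality (its Lemma~\ref{lem:WellPosed:RWBounds:ConcIneq}) to $\psi(y)=\eta_\eps(|y-x|)$ for fixed $x$, identifies $\E[d_{n,\eps}(x)]=n\int_{\Omega\cap B(x,2\eps)}\eta_\eps(|y-x|)\rho(y)\,\dd y$, and then conditions on $x\in\Omega_n$ and union bounds; your explicit handling of the diagonal term and of the $(n-1)/n$ correction, absorbed into $C\vartheta$ in the only regime where the bound is non-vacuous, is fine and if anything slightly more careful than the paper's one-line remark.

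The genuine gap is in your connectivity argument. The paper does not prove connectivity at all; it cites it as a standard fact from Penrose. You attempt a self-contained covering proof, but the step you yourself flag as ``the only genuine subtlety'' is false as stated: for a fixed axis-parallel grid of cubes of side $\sim\eps$, a cube $Q_k$ that merely clips $\Omega$ near $\partial\Omega$ can satisfy $|Q_k\cap\Omega|=o(\eps^d)$, indeed arbitrarily small, no matter how smooth $\partial\Omega$ is (take a ball and a grid square whose corner barely overlaps it). Smoothness of the boundary controls the geometry of $\Omega$, not where the grid lands relative to it, so the claimed uniform bound $|Q_k\cap\Omega|\geq c\eps^d$ for \emph{every} cube meeting $\Omega$ does not hold, and for such sliver cubes $\P(Q_k\cap\Omega_n=\emptyset)$ is not exponentially small — yet your chaining argument needs every cube containing a sample point to have all relevant neighbors populated. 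The standard repairs are either to cite the known connectivity result (as the paper does), or to replace the grid by the interior cone/corkscrew property of Lipschitz domains, namely $|B(x,r)\cap\Omega|\geq cr^d$ for all $x\in\bar\Omega$ and small $r$: then every sample point has, within distance $\eps/2$, a ball of radius $c\eps$ contained in $\Omega$ which is populated with exponentially high probability, and one chains such balls along a ``core'' of cubes contained in $\Omega$. Your final absorption of $\eps^{-d}e^{-cn\eps^d}$ into $Cne^{-cn\eps^d\vartheta^2}$ is fine once a correct connectivity bound of that form is in hand.
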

\begin{proof}
It is a standard result that the graph is connected with probability at least $1 - Cn\exp(-cn\eps^d)$; we refer the reader to \cite{penrose03} for details.

To prove the estimate \eqref{eq:dest}, fix $x\in \Omega$. By Lemma~\ref{lem:WellPosed:RWBounds:ConcIneq} applied to $\psi(y) = \eta_\eps(|y-x|)$ we have
\begin{equation}\label{eq:WellPosed:RWBounds:Deg}
\P\l|d_{n,\eps}(x) - \E[d_{n,\eps}(x)]| \geq Cn \vartheta\r \leq 2\exp(-cn\eps^d\vartheta^2)
\end{equation}
for all $0 \leq \vartheta \leq 1$, where
\[\E[d_{n,\eps}(x)] = n\int_{\Omega\cap B(x,2\eps)}\eta_\eps(|y-x|)\rho(y)\, \dd y.\]

While we proved the result for a fixed $x\in \Omega$, we can bound the event in question, where $x$ is any point in $\Omega_n$, by conditioning $x\in \Omega_n$, applying the result above, and union bounding over all $x\in \Omega_n$. 
\end{proof}

\begin{proposition}\label{prop:concentration2}
Let $0 \leq \vartheta \leq 1$, $\wO\subset \wO' \subset \Omega$, and assume {\bf (A1,3)} hold.
Assume $\bbP\l z(y)=1\,|\, y\in\tilde{\Omega}\r = \beta$.
The event that 
\begin{equation}\label{eq:pest}
\left|\frac{1}{n}p_{n,\eps}(x;\wO) - \beta\int_{\wO\cap B(x,2\eps)}\eta_\eps(|y-x|) \rho(y) \, \dd y\right|\leq C\beta \eps^{-d}\Vol\l\wO\cap B(x,2\eps)\r \vartheta
\end{equation}
holds for all $x\in \Omega_n\cap \wO'$ has probability at least $1-Cn\exp\l-c\wc n\beta\eps^d\vartheta^2\r$, where
\begin{equation}\label{eq:wc}
\wc := \min_{x\in \wO'}\eps^{-d}\Vol\l\wO\cap B(x,2\eps)\r.
\end{equation}
\end{proposition}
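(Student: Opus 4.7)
The plan is to mimic the proof of Proposition~\ref{prop:concentration1} but with the added twist that each summand now carries a Bernoulli weight, so that Bernstein's (rather than Hoeffding's) inequality becomes the natural tool because the variance is considerably smaller than the pointwise bound of the summands.

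First I would fix a deterministic point $x\in\Omega$ and write
\[ p_{n,\eps}(x;\wO) = \sum_{i=1}^n Y_i, \qquad Y_i:=\eta_\eps(|x_i-x|)\,\one_{z_i=1}\,\one_{x_i\in\wO}, \]
where the $Y_i$ are i.i.d.\ (since the samples $x_i$ are i.i.d.\ and independent of the Bernoulli selections $z_i$). The mean is
\[ \E[p_{n,\eps}(x;\wO)] = n\beta \int_{\wO\cap B(x,2\eps)} \eta_\eps(|y-x|)\,\rho(y)\,\dd y, \]
which is exactly the quantity appearing on the left-hand side of \eqref{eq:pest}. Using {\bf (A3)} (so $\|\eta_\eps\|_\infty\leq C\eps^{-d}$ and $\spt\eta_\eps\subset B(0,2\eps)$) and the fact that $\rho\leq\rho_{\max}$, I obtain the two bounds
\[ |Y_i|\leq C\eps^{-d}, \qquad \E[Y_i^2]\leq C\beta\eps^{-2d}\Vol(\wO\cap B(x,2\eps)) =: \sigma^2. \]

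Next I would apply Bernstein's inequality to the centered sum $p_{n,\eps}(x;\wO)-\E[p_{n,\eps}(x;\wO)]$ with deviation $t:=Cn\beta\eps^{-d}\Vol(\wO\cap B(x,2\eps))\vartheta$. Writing $V:=\eps^{-d}\Vol(\wO\cap B(x,2\eps))\geq\wc$, one checks
\[ \frac{t^2}{n\sigma^2} \geq c\,n\beta V\vartheta^2 \geq c\,\wc\,n\beta\eps^d\vartheta^2\cdot\frac{V}{\eps^d}, \]
and comparing with the linear term gives $Mt/(n\sigma^2)\leq C\vartheta\leq C$, so that Bernstein's bound produces a probability of failure no larger than $2\exp(-c\wc\,n\beta\eps^d\vartheta^2)$ for each fixed $x$.

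Finally, to promote the pointwise bound to a uniform estimate over $x\in \Omega_n\cap\wO'$, I would condition on the value $x_i=x$ for each index $i$ (after which the remaining $n-1$ samples and the independent Bernoullis play exactly the role above) and union bound over the $n$ indices, yielding the stated probability $1-Cn\exp(-c\wc n\beta\eps^d\vartheta^2)$. The only mild subtlety is the conditioning step, which works cleanly because the remaining $n-1$ samples are still i.i.d.\ from $\rho$ and independent of the Bernoullis, so the Bernstein estimate applied to the $n-1$-term sum changes the mean only by $O(\eps^{-d})$, which is absorbed into the bound once multiplied by $1/n$.
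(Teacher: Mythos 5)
Your proposal follows essentially the same route as the paper's proof: Bernstein's inequality applied to $Y_i=\eta_\eps(|x_i-x|)\one_{z_i=1}\one_{x_i\in\wO}$ with the moment bounds $\E[Y_i^2]\le C\beta\eps^{-2d}\Vol\l\wO\cap B(x,2\eps)\r$ and $|Y_i|\le C\eps^{-d}$, the same choice of deviation, the lower bound $\eps^{-d}\Vol\l\wO\cap B(x,2\eps)\r\ge\wc$, and a union bound over $x\in\Omega_n$ via conditioning. The only blemish is the intermediate display, which should read $t^2/(n\sigma^2)\ge c\,n\beta\,\Vol\l\wO\cap B(x,2\eps)\r\vartheta^2\ge c\,\wc\,n\beta\eps^{d}\vartheta^2$ (your middle expression misplaces a factor of $\eps^{d}$), but the stated conclusion and the rest of the argument are correct.
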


\begin{proof}
We use Bernstein's inequality with $Y_i = \eta_\eps(|x_i-x|)\one_{z_i=1}\one_{x_i\in \wO}$. Here we have
\[ \E[Y_i] = \E[Y_i \, | \, z_i=1 \, x_i\in\wO]\P(z_i=1\,|\,x_i\in\wO) \P(x_i\in\wO) = \beta\int_{\wO\cap B(x,2\eps)}\eta_\eps(|y-x|)\rho(y)\, \dd y \]
and
\[ \sigma^2 \leq \E[Y_i^2] =\beta \int_{\wO\cap B(x,2\eps)}\eta_\eps(|y-x|)^2\rho(y)\, \dd y\leq C\beta \eps^{-2d} \Vol\l\wO\cap B(x,2\eps)\r. \]
We also have $|Y_i|\leq C\eps^{-d}$. Therefore Bernstein's inequality \eqref{eq:bernstein} yields
\[ \P\l|p_{n,\eps}(x;\wO) - n\E[Y_i]| \geq nt \r \leq 2\exp\l-\frac{cn\eps^dt^2}{\beta \eps^{-d} \Vol\l\wO\cap B(x,2\eps)\r+ t}\r \]
for $t>0$. Set $\vartheta=\beta^{-1} \eps^{d} \Vol\l\wO\cap B(x,2\eps)\r^{-1}t$ to find that
\[\P\l|p_{n,\eps}(x;\wO) - n\E[Y_i]| \geq n\beta\eps^{-d}\Vol\l\wO\cap B(x,2\eps)\r\vartheta \r \leq 2\exp\l\frac{-cn\beta\Vol\l\wO\cap B(x,2\eps)\r\vartheta^2}{1+ \vartheta}\r\]
Restricting $0 \leq \vartheta \leq 1$ and bounding $\Vol\l\wO\cap B(x,2\eps)\r\geq \wc\, \eps^d$ completes the proof. As in Proposition  \ref{prop:concentration1}, we complete the proof by union bounding over $x\in \Omega_n$.
\end{proof}

\begin{lemma}\label{lem:qdrift}
Assume {\bf (A1), (A3)}. Then there exist constants $c_1,c_2>0$ such that 
\begin{equation}\label{eq:driftq}
q_{n,\eps}(x) \geq c_1 n \ \ \ \text{ for all }x\in \Omega_n\setminus \partial_{\eps} \Omega
\end{equation}
holds with probability at least  $1 - 2n\exp\left( -c_2n\eps^d \right)$.
\end{lemma}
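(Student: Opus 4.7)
The plan is to follow the pattern of Propositions \ref{prop:concentration1} and \ref{prop:concentration2}: lower-bound the expectation $\E[q_{n,\eps}(x)]$ by a constant multiple of $n$ via a geometric argument, apply a Bernstein-type concentration inequality, and finally union-bound over $x\in\Omega_n$.

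For the geometric step, fix $x\in\Omega$ with $\delta_x\geq\eps$, let $p\in\partial\Omega$ be a point realizing $|x-p|=\delta_x$, and set $u=(p-x)/|p-x|$. For each $t\in[\tfrac{3\eps}{4},\tfrac{7\eps}{8}]$ the point $z_t:=x+tu$ lies in $\Omega$ and satisfies $\dist(z_t,\partial\Omega)\leq|z_t-p|=\delta_x-t\leq \delta_x-3\eps/4$. Using the $\Ck{3}$ (and in particular uniform interior cone) regularity of $\partial\Omega$, I would construct a tube about the subsegment $\{z_t:t\in[\tfrac{3\eps}{4},\tfrac{7\eps}{8}]\}$ of radius proportional to $\eps$ whose intersection with $\Omega$ has Lebesgue measure at least $c\eps^d$ and whose every point $y$ lies in $B(x,\eps)$ and satisfies $\delta_y\leq\delta_x-\eps/2$. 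Combined with $\eta_\eps(|y-x|)\geq \eps^{-d}$ on $B(x,\eps)$ from (A3) and $\rho\geq\rho_{\min}>0$, this yields
\[\E[q_{n,\eps}(x)] = n\int_\Omega \eta_\eps(|y-x|)\,\one_{\delta_y\leq\delta_x-\eps/2}\,\rho(y)\,\dd y \geq 2c_1 n\]
for a constant $c_1>0$ depending only on $d$, $\Omega$, $\rho_{\min}$, and $\eta$, uniformly in $x$ with $\delta_x\geq\eps$.

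Next I would apply Bernstein's inequality to the i.i.d.\ sum $q_{n,\eps}(x)=\sum_{i=1}^n Y_i$ with $Y_i=\eta_\eps(|x_i-x|)\one_{\delta_{x_i}\leq\delta_x-\eps/2}$, using the bounds $\|Y_i\|_\infty\leq C\eps^{-d}$ and $\E[Y_i^2]\leq C\eps^{-d}\E[Y_i]\leq C\eps^{-d}$, to obtain
\[\P\bigl(q_{n,\eps}(x)<c_1 n\bigr)\leq 2\exp(-c_2 n\eps^d)\]
for an appropriate $c_2>0$. Finally, as in Proposition \ref{prop:concentration1}, I would condition on $x$ being a point of $\Omega_n$ and union-bound over the (at most) $n$ points of $\Omega_n\setminus\partial_\eps\Omega$, yielding the claim with failure probability at most $2n\exp(-c_2 n\eps^d)$.

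The main obstacle is the uniform geometric volume estimate, which must cover both the near-boundary regime $\delta_x\approx\eps$ (where the constructed tube sits close to $\partial\Omega$ and one must ensure a fixed fraction of it still lies inside $\Omega$) and the deep-interior regime $\delta_x\gg\eps$ (where the tube is comfortably inside $\Omega$ and the bound is immediate). The $\Ck{3}$ boundary regularity from (A1) ensures a uniform interior cone/rolling-ball property that handles both simultaneously; once this is in place, the Bernstein and union-bound steps parallel the previous two propositions exactly.
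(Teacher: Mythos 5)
Your proof is correct, and it takes a genuinely different route from the paper. The paper's proof exploits the \emph{semiconcavity} of the distance function $u(y)=\delta_y$ (a consequence of the $\Ck{2}$ boundary): it fixes $x$ where $u$ is differentiable (which holds a.s.\ for the random points $x_i$), uses $|\nabla u(x)|=1$, and takes
\[
A=\{y\in B(x,\eps)\cap\Omega : \nabla u(x)\cdot(y-x)\leq -C\eps^2-\eps/2\},
\]
an approximate spherical cap, so that semiconcavity forces $\delta_y\leq\delta_x-\eps/2$ on $A$. Your tube-toward-the-nearest-boundary-point construction instead relies only on the two elementary facts $\delta_y\leq|y-p|$ (definition of distance, $p\in\partial\Omega$) and $\delta_y\geq\delta_x-|y-x|$ (the distance function is $1$-Lipschitz for \emph{any} set), together with a triangle inequality: if $|y-z_t|\leq r$ with $z_t=x+t(p-x)/\delta_x$, then $\delta_y\leq r+\delta_x-t$, and $t\geq r+\eps/2$ closes the estimate. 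This means your geometric step does not in fact use any boundary regularity at all, so your appeal to ``$\Ck{3}$ / uniform interior cone regularity'' is superfluous — I would simply drop it. Your approach has the minor advantage of working pointwise for every $x$ with $\delta_x\geq\eps$ rather than only at points of differentiability of $\delta_\cdot$, and of not requiring a smallness restriction on $\eps$ from the $C\eps^2$ correction term. The concentration and union-bound steps are as in the paper (the paper routes through Lemma~\ref{lem:WellPosed:RWBounds:ConcIneq} rather than quoting Bernstein directly, but this is cosmetic). One small caveat on constants: with $t\in[3\eps/4,7\eps/8]$ and the needed $t\geq r+\eps/2$ you should take $r\leq\eps/4$, and to guarantee $|y-x|<\eps$ strictly (so that $y\in\Omega$ and $\eta_\eps(|y-x|)\geq\eps^{-d}$) you should shrink either the $t$-interval or $r$ slightly; this is a routine adjustment.
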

\begin{proof}
Since $\partial \Omega$ is $\Ck{2}$, the distance function $x\mapsto \delta_x=\dist(x,\partial\Omega)$ is semiconcave, that is, there exists $C>0$ such that $\delta_x - C|x|^2$ is concave. For clarity, let us write $u(x)=\delta_x$ throuhout the proof. Since $x_1,x_2,\dots,x_n$ are \emph{i.i.d.}~with Lebesgue density and $u$ is differentiable almost everywhere, we have that $u$ is differentiable at every $x_i$ almost surely, and $|\nabla u(x_i)|=1$.

Fix $x\in \Omega$ such that $u$ is differentiable at $x$. Due to the semiconcavity of $u$, the function $y\mapsto u(y) - C|x-y|^2$ is concave, and so
\begin{equation}\label{eq:semiconcave}
u(y) - C|x-y|^2 \leq u(x) + \nabla u(x)\cdot (y-x)
\end{equation}
for all $y\in \Omega$.  Let
\[A = \left\{y\in B(x,\eps)\cap \Omega \, :\, \nabla u(x) \cdot (y-x)\leq -C\eps^2 -\frac{\eps}{2}\right\}.\]
By \eqref{eq:semiconcave} we have that
\[A \subset \left\{ y\in B(x,\eps)\cap \Omega \, : \, u(y) \leq u(x) - \frac{\eps}{2} \right\},\]
and so
\[q_{n,\eps}(x) \geq \sum_{y\in \Omega_n \cap A}\eta_\eps(|y-x|).\]
By the concentration inequality \eqref{lem:WellPosed:RWBounds:ConcIneq} we have that
\[q_{n,\eps}(x) \geq n\int_{B(x,\eps)\cap A}\eta_\eps(|y-x|)\rho(y)\, \dd y - Cn\vt\]
holds with probability at least $1-2\exp\left( -cn\eps^d \vt^2 \right)$ for any $0 \leq \vt \leq 1$.
By {\bf (A1), (A3)} we have
\begin{align*}
\int_{A}\eta_\eps(|y-x|)\rho(y)\, \dd y &\geq \rho_{\min}\eps^{-d}\int_{ A}\, \dd y\geq c\,\rho_{\min},
\end{align*}
for $\eps>0$ sufficiently small, where $c$ depends on $d$ and $\Omega$.  Therefore 
\[q_{n,\eps}(x) \geq n\left(c\, \rho_{\min}- C\vt\right).\]
holds with probability at least $1-2\exp\left( -cn\eps^d \vt^2 \right)$. Choosing $\vt>0$ sufficiently small, and union bounding over $x_1,x_2,\dots,x_n$ completes the proof.
\end{proof}

\begin{proposition}\label{prop:asymp}
Let $x\in \Omega$ such that $B(x,2\eps)\subset \Omega$ and assume $\rho \in \Ck{2}(\bar{\Omega})$. Then 
\begin{equation}\label{eq:degasymp}
\int_{B(x,2\eps)}\eta_\eps(|y-x|)\rho(y) \, \dd y  = C_\eta \rho(x) + O(\eps^2).
\end{equation}
where $C_\eta = \int_{B(0,2)}\eta(|z|)\, dz$.
\end{proposition}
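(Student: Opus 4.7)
The statement is a routine local asymptotic expansion for convolution-type integrals with smooth densities; the only slight subtlety is keeping track of the symmetry that kills the first-order term. The plan is to perform the change of variables $y = x + \eps z$, expand $\rho$ to second order using its $\Ck{2}$ regularity on $\bar{\Omega}$, and then exploit radial symmetry of the kernel $\eta(|z|)$ to eliminate the linear term, leaving an $O(\eps^2)$ remainder.

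First I would change variables by setting $z = (y-x)/\eps$, so that $\dd y = \eps^d \, \dd z$ and $\eta_\eps(|y-x|) = \eps^{-d}\eta(|z|)$. Because $B(x,2\eps)\subset \Omega$, the domain of integration becomes exactly $B(0,2)$, giving
\[
\int_{B(x,2\eps)}\eta_\eps(|y-x|)\rho(y)\,\dd y = \int_{B(0,2)} \eta(|z|)\,\rho(x+\eps z)\,\dd z.
\]
Next, using $\rho \in \Ck{2}(\bar{\Omega})$ and the fact that the segment from $x$ to $x+\eps z$ stays in $\Omega$ for $z\in B(0,2)$ (again by $B(x,2\eps)\subset\Omega$), Taylor's theorem with integral remainder gives
\[
\rho(x+\eps z) = \rho(x) + \eps\,\nabla\rho(x)\cdot z + R_\eps(x,z),
\]
with $|R_\eps(x,z)| \leq \tfrac{1}{2}\|\nabla^2\rho\|_{\Linfty(\bar{\Omega})}\,\eps^2 |z|^2$ uniformly on $B(0,2)$.

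Substituting this expansion and integrating term by term, the zeroth-order term yields $\rho(x)\int_{B(0,2)}\eta(|z|)\,\dd z = C_\eta\rho(x)$. The first-order term vanishes: since $\eta(|z|)$ is radial and $B(0,2)$ is symmetric about the origin, $z\mapsto \eta(|z|)\,\nabla\rho(x)\cdot z$ is odd on $B(0,2)$ and integrates to zero. The remainder contributes at most
\[
\tfrac{1}{2}\|\nabla^2\rho\|_{\Linfty(\bar{\Omega})}\,\eps^2 \int_{B(0,2)} \eta(|z|)\,|z|^2\,\dd z,
\]
which is $O(\eps^2)$ by Assumption \textbf{(A3)} (in fact the integral is bounded by $d\,\sigma_\eta$, since $\eta$ is radial so $\int \eta(|z|) z_i^2 \dd z$ is the same for every coordinate). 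Combining these three estimates yields the claimed expansion.

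The proof is essentially mechanical; the only potential obstacle is making sure the Taylor remainder bound is uniform, which is automatic from the assumption $\rho\in \Ck{2}(\bar{\Omega})$ with $\bar\Omega$ compact. No delicate geometry of $\partial\Omega$ enters because the hypothesis $B(x,2\eps)\subset\Omega$ keeps the entire integration region strictly in the interior.
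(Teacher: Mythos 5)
Your proof is correct and follows essentially the same route as the paper: rescale $y = x + \eps z$, Taylor expand $\rho$ to second order, and use the radial symmetry of $\eta$ so the linear term integrates to zero, leaving an $O(\eps^2)$ remainder. The paper's proof is just a terser version of exactly this computation.
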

\begin{proof}
We compute
\begin{align*}
\int_{B(x,2\eps)}\eta_\eps(|y-x|)\rho(y) \, \dd y &= \int_{B(0,2)}\eta(|z|)\l\rho(x) + \eps\nabla \rho(x)\cdot z + O(\eps^2)\r \, \dd z\\
&=C_\eta\rho(x) + O(\eps^2).\qedhere
\end{align*}
\end{proof}

\subsection{Random walk bounds} \label{subsec:WellPosed:RWBounds}

In this section, we study random walks on geometric graphs and prove basic estimates using Martingale techniques. 
We always assume $\eps \leq 1$ and $\beta\geq \eps^2$.
Here, we assume $z_1,\dots,z_n\in \{0,1\}$ and $x_1,\dots,x_n\in \Omega$ are given deterministic points, we set $\Omega_n=\{x_1,\dots,x_n\}$. We are also given sets $\wO \subset \wO'\subset \Omega$, which will change between the two models, and we define $\wc$ as in \eqref{eq:wc}.

In this section, we assume the estimates derived in Sections~\ref{subsec:WellPosed:Pointwise} and~\ref{sec:graph} hold.  That is, we assume there exists $C,c>0$ and $\vartheta\in [\eps^2,1]$ such that
\begin{equation}\label{eq:degestimate}
|d_{n,\eps}(x) - C_\eta n \rho(x)| \leq Cn\vartheta \ \ \ \text{for all }x\in \Omega_n\setminus \partial_{2\eps}\Omega,
\end{equation}
\begin{equation}\label{eq:pestimate}
cn \leq d_{n,\eps}(x)\leq Cn \ \ \text{ and }\ \ p_{n,\eps}(x;\wO) \geq c \wc n \beta \ \ \ \text{for all }x\in \Omega_n\cap \wO',
\end{equation}
\begin{equation}\label{eq:qestimate}
q_{n,\eps}(x)\geq cn \ \ \ \text{for all }x\in \Omega_n\setminus \partial_\eps\Omega,
\end{equation}
and 
\begin{equation}\label{eq:Lestimate}
|\L_{n,\eps}\varphi(x) - \L \varphi(x)| \leq C\|\varphi\|_{\Ck{3}(\bar{\Omega})}\vartheta \eps^{-1} \ \ \ \text{for all }\varphi\in \Ck{3}(\bar{\Omega}) \text{ and }x\in \Omega_n\setminus \partial_{2\eps}\Omega,
\end{equation}
where $d_{n,\eps}$, $p_{n,\eps}$, $q_{n,\eps}$, $\L_{n,\eps}$ and $\L$ are defined in \eqref{eq:degree}, \eqref{eq:prob}, \eqref{eq:drift}, \eqref{eq:GL} and~\eqref{eq:WellPosed:Pointwise:ContL} respectively.  When the graph is random, as in Section \ref{sec:graph}, this holds with probability at least $1 - Cn\exp(-cn\eps^d\vartheta^2) - Cn\exp(-c\wc n\beta\eps^d)$, due to Propositions \ref{prop:concentration1}, \ref{prop:concentration2} and \ref{prop:asymp}, Lemma~\ref{lem:qdrift}, and Corollary \ref{cor:WellPosed:Pointwise:GraphtoLLap}. Some results require only a subset of the estimates above, and we will indicate this in the results below.

Let $x\in \Omega_n$. We consider a random walk on the graph $\Omega_n$ with transition probabilities 
\begin{equation}\label{eq:transition}
\P(X_k = y \, | \, X_{k-1}=x) = \frac{\eta_{\eps}(|y-x|)}{d_{n,\eps}(x)}
\end{equation}
for $k\geq 1$.  We first recall and precisely state   the well-known  connection between the random walk on the graph and the Laplacian. We note that the difference between the random walk Laplacian and the variational graph Laplacian considered in \eqref{eq:GL} is the weighted degree in the denominator, which results in the extra $\rho$ in the denominator of the limiting weighted Laplacian.
\begin{proposition}\label{prop:rwlap}
Let $\vartheta\in (0,1)$ and $\eps\in (0,1)$.
Assume {\bf (A3)}, \eqref{eq:degestimate} and \eqref{eq:Lestimate} hold. Let $(X_k)_{k\geq 0}$ be a random walk on $\Omega_n$ with transition probabilities \eqref{eq:transition} starting at $X_0=x\in \Omega_n$, and let $\F_k$ denote the $\sigma$-algebra generated by $X_0,X_1,\dots,X_k$. Then for every $\varphi\in \Ck{3}(\bar{\Omega})$
\begin{equation}\label{eq:rwdrift}
\E[\phi(X_{k})-\phi(X_{k-1}) \, | \, \F_{k-1}] =\frac{\sigma_\eta}{C_\eta} \rho^{-2}\Div\left( \rho^2 \nabla \phi \right)\Big\vert_{X_{k-1}}\eps^2 + O\l\eps\vartheta\|\varphi\|_{\Ck{3}(\overline{\Omega})}\r,
\end{equation}
whenever $\dist(X_{k-1},\partial \Omega)\geq 2\eps$.
\end{proposition}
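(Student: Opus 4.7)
The proof is essentially an algebraic identification combined with the two consistency estimates stated earlier in the section, so the plan is short. The core observation is that the conditional expectation is a weighted average over neighbors with weights $\eta_\eps(|y-X_{k-1}|)/d_{n,\eps}(X_{k-1})$, which up to a scaling factor of $n\eps^2/d_{n,\eps}(X_{k-1})$ is exactly $-\cL_{n,\eps}\varphi(X_{k-1})$ as defined in~\eqref{eq:GL}. So first I would fix $x=X_{k-1}$ with $\dist(x,\partial\Omega)\geq 2\eps$ and write
\[
\E[\varphi(X_k)-\varphi(X_{k-1}) \, |\, \F_{k-1}] = \frac{1}{d_{n,\eps}(x)} \sum_{y\in\Omega_n} \eta_\eps(|y-x|)(\varphi(y)-\varphi(x)) = -\frac{n\eps^2}{d_{n,\eps}(x)}\cL_{n,\eps}\varphi(x).
\]

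Next I would apply the pointwise consistency estimate~\eqref{eq:Lestimate}, valid for $x\in\Omega_n\setminus\partial_{2\eps}\Omega$, to replace $\cL_{n,\eps}\varphi(x)$ by the continuum operator $\cL\varphi(x) = -\frac{\sigma_\eta}{\rho(x)}\Div(\rho^2\nabla\varphi)(x)$ at the cost of an additive error $O(\|\varphi\|_{\Ck{3}(\bar\Omega)}\vt\eps^{-1})$. At the same time, by the degree estimate~\eqref{eq:degestimate} together with the volume bound $d_{n,\eps}(x)\geq cn$ coming from~\eqref{eq:pestimate}, I have
\[
\frac{n}{d_{n,\eps}(x)} = \frac{1}{C_\eta \rho(x)}\bigl(1+O(\vt)\bigr).
\]

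Multiplying these two expansions gives
\[
\E[\varphi(X_k)-\varphi(X_{k-1}) \, |\, \F_{k-1}] = \frac{\sigma_\eta}{C_\eta}\frac{\eps^2}{\rho(x)^2}\Div(\rho^2\nabla\varphi)(x)\bigl(1+O(\vt)\bigr) + O\!\left(\frac{\eps^2}{C_\eta\rho(x)}\|\varphi\|_{\Ck{3}(\bar\Omega)}\vt\eps^{-1}\right).
\]
The main term matches the claim; the residual consists of two pieces. The second error term is manifestly $O(\eps\vt\|\varphi\|_{\Ck{3}(\bar\Omega)})$ after absorbing the constants $C_\eta$ and $\rho_{\min}^{-1}$. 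The first error, obtained by multiplying the leading $O(\eps^2\|\varphi\|_{\Ck{3}(\bar\Omega)})$ main term by the $O(\vt)$ multiplicative correction from the degree, is $O(\eps^2\vt\|\varphi\|_{\Ck{3}(\bar\Omega)})\leq O(\eps\vt\|\varphi\|_{\Ck{3}(\bar\Omega)})$ since $\eps\leq 1$.

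There is no real obstacle here; the only thing worth being careful about is tracking the bounds uniformly in $x$, which is handled by using $\rho\geq\rho_{\min}>0$ from {\bf (A1)} and the uniform lower bound $d_{n,\eps}(x)\geq cn$ from~\eqref{eq:pestimate}, so that the $(1+O(\vt))$ factor from inverting the degree is genuinely of the stated order. Combining the two error contributions yields the asserted bound $O(\eps\vt\|\varphi\|_{\Ck{3}(\bar\Omega)})$ and completes the proof.
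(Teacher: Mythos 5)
Your proof is correct and follows the same route as the paper: rewrite the conditional expectation as $-\eps^2 \cL_{n,\eps}\phi(X_{k-1})\bigl(d_{n,\eps}(X_{k-1})/n\bigr)^{-1}$, then substitute the degree estimate \eqref{eq:degestimate} and the pointwise consistency estimate \eqref{eq:Lestimate} and combine the two $O(\vt)$ errors. One small remark: the lower bound $d_{n,\eps}(x)\geq cn$ that you attribute to \eqref{eq:pestimate} (which is not among the stated hypotheses of the proposition, and is only asserted on $\wO'$) actually follows directly from \eqref{eq:degestimate} together with $\rho\geq\rho_{\min}>0$ once $\vt$ is small enough, which is what the paper implicitly uses.
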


\begin{proof}
Due to \eqref{eq:degestimate} and \eqref{eq:Lestimate} we compute
\begin{align*}
\E[\phi(X_k) - \phi(X_{k-1}) | \F_{k-1}]&=\sum_{y\in \Omega_n}\frac{\eta_\eps(|y-X_{k-1}|)}{d_{n,\eps}(X_{k-1})}(\phi(y)-\phi(X_{k-1}))\\
&= - \eps^2\L_{n,\eps}\phi (X_{k-1})\left(\frac{d_{n,\eps}(X_{k-1})}{n}\right)^{-1}\\
&= - \eps^2\left(\L \phi(X_{k-1}) + O\left( \|\varphi\|_{\Ck{3}(\bar{\Omega})}\vartheta \eps^{-1} \right) \right)\left(C_\eta\rho(X_{k-1}) + O(\vartheta)\right)^{-1}\\
&=\frac{\sigma_\eta}{C_\eta} \rho^{-2}\div\left( \rho^2 \nabla \phi \right)\Big\vert_{X_{k-1}}\eps^2 + O\l\vartheta\eps\|\varphi\|_{\Ck{3}(\overline{\Omega})}\r,
\end{align*}
which completes the proof.
\end{proof}
   
We now use Azuma's inequality to bound how far the random walk travels in $k$ steps. The random walk has a small drift, due to irregularities in the graph $\Omega_n$, that must be accounted for. For completeness, we give a short proof of Azuma's inequality in Appendix \ref{sec:azuma}.

\begin{lemma}\label{lem:azuma}
Let $\eps \in (0,1)$, $\vartheta\in (\eps,1)$ and $x\in \Omega_n$.
Assume {\bf (A3)},  \eqref{eq:degestimate} and \eqref{eq:Lestimate} hold.
Let $(X_k)_{k\geq 0}$ be a random walk on $\Omega_n$ with transition probabilities \eqref{eq:transition} starting at $X_0=x$. For any $r>0$ with $r\leq \dist(x,\partial \Omega) - 2\eps$ we have
\begin{equation} \label{eq:rwdist}
\P\left(\max_{1\leq i\le k}|X_i  - X_0 | > r\right)\leq 2d\exp\l-\frac{c(r-Ck\eps\vartheta)_+^2}{k\eps^2}\r
\end{equation}
for all $k\in \bbN$.
\end{lemma}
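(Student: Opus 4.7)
The plan is to apply Azuma's inequality coordinate-wise to a stopped and centred version of the walk, extracting the per-step drift from Proposition~\ref{prop:rwlap}. Introduce the stopping time
\[\tau = \inf\{i \geq 0 \,:\, |X_i - X_0| > r\}.\]
Since $r \leq \dist(x,\partial\Omega) - 2\eps$, the triangle inequality yields $\dist(X_i,\partial\Omega) \geq 2\eps$ whenever $i \leq \tau$, so Proposition~\ref{prop:rwlap} applies along the entire stopped trajectory. For each $j \in \{1,\dots,d\}$ I take $\varphi(y) = y_j$ in that proposition; then $\|\varphi\|_{\Ck{3}(\bar\Omega)} \leq C$ and the principal term $\frac{\sigma_\eta}{C_\eta}\rho^{-2}\Div(\rho^2 e_j)\eps^2$ is of order $\eps^2$, whence
\[\la D^j_l\ra := \la \E\ls (X_l)_j - (X_{l-1})_j \mid \F_{l-1}\rs \ra \leq C\eps\vartheta \qquad \text{for } l-1 \leq \tau,\]
where the $O(\eps^2)$ contribution is absorbed into $O(\eps\vartheta)$ using the hypothesis $\vartheta \geq \eps$.

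Next I set
\[M^j_i = (X_{i\wedge\tau})_j - (X_0)_j - \sum_{l=1}^{i\wedge\tau} D^j_l, \qquad i\geq 0.\]
Each $M^j$ is an $(\F_i)$-martingale (optional stopping preserves the martingale property, and the increments vanish past $\tau$) with increments bounded deterministically by
\[|M^j_i - M^j_{i-1}| \leq |X_{i\wedge\tau} - X_{(i-1)\wedge\tau}| + |D^j_{i\wedge\tau}|\one_{i\leq\tau} \leq 2\eps + C\eps\vartheta \leq C\eps,\]
since $\eta_\eps$ is supported in $[0,2\eps]$ and $\vartheta \leq 1$. Azuma's inequality (Appendix~\ref{sec:azuma}) then yields $\P(|M^j_k| > s) \leq 2\exp\l -cs^2/(k\eps^2) \r$ for every $s > 0$.

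On the event $\lb \max_{1\leq i\leq k} |X_i - X_0| > r\rb$ we have $\tau \leq k$, hence $|X_{k\wedge\tau} - X_0| > r$, so some coordinate $j$ satisfies $|(X_{k\wedge\tau})_j - (X_0)_j| > r/\sqrt d$, and consequently
\[|M^j_k| \geq \frac{r}{\sqrt d} - Ck\eps\vartheta.\]
A union bound over $j = 1,\dots,d$ combined with the Azuma estimate delivers \eqref{eq:rwdist}, with the dimensional factor $\sqrt d$ absorbed into the constants $c,C$. The only delicate point is guaranteeing that the drift estimate from Proposition~\ref{prop:rwlap} is valid at every step that matters; this is exactly what the stopping time $\tau$ and the hypothesis $r \leq \dist(x,\partial\Omega) - 2\eps$ are engineered to provide, and once this is in place the argument is a textbook Azuma-with-drift calculation.
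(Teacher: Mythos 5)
Your proof is correct and follows essentially the same route as the paper: stop the walk at the exit time from $B(x,r)$, use Proposition~\ref{prop:rwlap} (with the hypothesis $r\leq\dist(x,\partial\Omega)-2\eps$) to bound the per-step drift by $C\eps\vartheta$, apply Azuma coordinate-wise to the stopped walk with increments of size $O(\eps)$, and union bound over the $d$ coordinates, absorbing the $\sqrt d$ factor into the constants. The only (cosmetic) difference is that you subtract the exact conditional drifts to get a true martingale and bound the compensator by $Ck\eps\vartheta$ afterwards, whereas the paper compensates directly by $Ck\eps\vartheta$ to form super/submartingales $X_{k\wedge\tau}\cdot\e_i\mp Ck\eps\vartheta$; both are the same Azuma-with-drift argument.
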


\begin{proof}
Let $\F_k$ denote the $\sigma$-algebra generated by $X_0,X_1,\dots,X_k$. Define the stopping time
\begin{equation}\label{eq:stop}
\tau = \inf\{ k\geq 0 \, : \, |X_k - x| > r\},
\end{equation} 
and set $Y_k= X_{k\wedge \tau}$. By Proposition \ref{prop:rwlap} we have
\[\left|\E[Y_k - Y_{k-1} | \F_{k-1}]\right| = \one_{\tau > k-1}|\E[X_k - X_{k-1} | \F_{k-1}]|\leq C(\eps\vartheta + \eps^2) \leq C\eps\vartheta\]
since $\vartheta\geq \eps$. It follows that $Z_k = Y_k\cdot \e_i - Ck\eps\vartheta$ is a supermartingale with respect to $\F_k$ for each $i$, with increments bounded by $C\eps$, since $\vartheta\leq 1$. Azuma's inequality  yields
\[\P(Y_k\cdot \e_i - Y_0\cdot \e_i \geq t + Ck\eps\vartheta)=\P(Z_k-Z_0 \geq t) \leq \exp\l-\frac{ct^2}{k\eps^2}\r.\]
for all $t>0$. Applying the same argument to the submartingale $Z_k = Y_k\cdot \e_i + Ck\eps\vartheta$ and summing over $i$ yields
\[\P(|Y_k  - Y_0 | \geq t + Ck\eps\vartheta)\leq 2d\exp\l-\frac{ct^2}{k\eps^2}\r\]
for any $t>0$. Choose $t = r - Ck\eps\vartheta$ and assume $r > Ck\eps\vartheta$. If $\max_{1\leq i \leq k}|X_i-X_0|> r$ then $\tau \leq k$ and so $|Y_k-Y_0|>r$. It follows that
\[\P\left(\max_{1\leq i\le k}|X_i  - X_0 | > r\right)\leq 2d\exp\l-\frac{c(r-Ck\eps\vartheta)^2}{k\eps^2}\r,\]
which completes the proof.
\end{proof}   

Let $\wO\subset \Omega$ be open and set
\begin{equation}\label{eq:Gamma}
\Gamma_n = \{x_i\in \Omega_n \, : \, z_i=1 \text{ and }x_i\in \wO\}.
\end{equation}
We consider the boundary value problem
\begin{equation}\label{eq:bvp}
\left\{\begin{aligned}
\sum_{y\in \Omega_n}\eta_\eps(|y-x|)(u(y)-u(x)) &= 0,&&\text{if }x\in\Omega_n\setminus \Gamma_n\\ 
u(x) &=g(x),&&\text{if }x\in \Gamma_n,
\end{aligned}\right.
\end{equation}
which includes both {\bf Model 1} and {\bf Model 2} for appropriate choices of $\wO$. 

We first prove a boundary estimate for {\bf Model 1}. Here, $\wO=\wO'\subset \Omega$ are fixed, and so $\wc>0$ is bounded away from zero, independently of the parameters $n,\eps,\beta$.

\begin{theorem}\label{thm:boundary}
Let $\beta\in [\eps^2,1]$ and $\eps\in (0,1)$.
Assume {\bf (A3)} and that \eqref{eq:degestimate}, \eqref{eq:pestimate} and \eqref{eq:Lestimate} hold with $\vartheta=\sqrt{\beta}$ and $\wO^\prime=\wO$.
Let $g\in \Ck{0,1}(\bar{\Omega})$ and let $u_n$ solve \eqref{eq:bvp}. Then there exists $C,K>0$ such that for every $x\in \Omega_n\cap\wO$ with $\dist(x,\partial\wO)\geq K\beta^{-\frac12}\eps\log(\sqrt{\beta}\eps^{-1})$ we have
\begin{equation}\label{eq:boundaryestimate}
|u_n(x) - g(x)| \leq \frac{C\eps}{\sqrt{\beta}}\log\l\frac{\sqrt{\beta}}{\eps}\r.
\end{equation}
\end{theorem}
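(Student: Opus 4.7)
The plan is to use the random walk representation of solutions to \eqref{eq:bvp}: namely $u_n(x) = \E[g(X_\tau)]$, where $(X_k)_{k\geq 0}$ is the random walk on $\Omega_n$ started at $X_0=x$ with transition probabilities \eqref{eq:transition} and $\tau = \inf\{k\geq 0 : X_k \in \Gamma_n\}$ is the first hitting time of the labeled set. Since $g$ is Lipschitz, the theorem reduces to showing that $\E[|X_\tau - x|] \leq C\eps\beta^{-1/2}\log(\sqrt{\beta}/\eps)$.

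Introduce the target radius $r := K\eps\beta^{-1/2}\log(\sqrt{\beta}/\eps)$ and time horizon $k := A\beta^{-1}\log(\sqrt{\beta}/\eps)$, where $K$ and $A$ are constants to be fixed at the end. Let $\tau_r = \inf\{j\geq 0 : |X_j - x| > r\}$ be the exit time from $B(x,r)$. The hypothesis $\dist(x,\partial\wO)\geq r$ ensures $B(x,r)\subset \wO$, so the walker lies in $\wO$ for every $j<\tau_r$. Since $|X_\tau - x| \leq r$ on $\{\tau\leq \tau_r\}$ and $|X_\tau - x| \leq \diam(\Omega)$ always, it suffices to show that
\[ \P(\tau_r < \tau) \leq \P(\tau_r\wedge \tau > k) + \P(\tau_r \leq k) \]
is of order $r/\diam(\Omega)$ or smaller.

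The argument splits into two probabilistic estimates. First, on $\{\tau_r > j\}$ the walker sits in $B(x,r)\subset \wO$, so the conditional probability of jumping into $\Gamma_n$ at the next step equals $p_{n,\eps}(X_j;\wO)/d_{n,\eps}(X_j) \geq c\beta$ by \eqref{eq:pestimate}. Iterating via the Markov property yields
\[ \P(\tau_r\wedge\tau > k) \leq (1-c\beta)^k \leq (\eps/\sqrt{\beta})^{cA}, \]
which is as small as we wish for $A$ large. Second, Lemma \ref{lem:azuma} applied with $\vartheta = \sqrt{\beta}$ (permitted because $\beta\geq \eps^2$) gives
\[ \P(\tau_r \leq k) \leq 2d\exp\l-\frac{c(r - C_1 k\eps\sqrt{\beta})_+^2}{k\eps^2}\r. \]
With our choices $C_1 k\eps\sqrt{\beta} = C_1 A\eps\beta^{-1/2}\log(\sqrt{\beta}/\eps) = (C_1 A/K)r$, so choosing $K \geq 2C_1 A$ forces $r - C_1 k\eps\sqrt{\beta}\geq r/2$ and the exponent becomes proportional to $(K^2/A)\log(\sqrt{\beta}/\eps)$. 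Taking $A$ large enough so that $(\eps/\sqrt{\beta})^{cA} \ll r/\diam(\Omega)$, and then $K$ correspondingly larger, closes the argument once we combine with $\E[|X_\tau - x|] \leq r + \diam(\Omega)\cdot\P(\tau_r < \tau)$.

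The main obstacle is the tight coupling between $r$ and $k$ forced by the random walk drift. The pointwise consistency estimate \eqref{eq:Lestimate} is only accurate to order $\vartheta = \sqrt{\beta}$, which Proposition \ref{prop:rwlap} translates into a systematic per-step bias of size $\eps\sqrt{\beta}$. Over $k \asymp \beta^{-1}\log(\sqrt{\beta}/\eps)$ steps this drift accumulates to precisely the same scale as $r$, and it is this competition that dictates $K$ strictly larger than $A$ and ultimately produces the logarithmic factor $\log(\sqrt{\beta}/\eps)$ in the final rate; a naive $O(\eps/\sqrt{\beta})$ estimate without a log is out of reach by this martingale approach.
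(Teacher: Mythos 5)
Your proposal is correct and follows essentially the same route as the paper's proof: the random walk representation of the solution of \eqref{eq:bvp}, the two stopping times (hitting $\Gamma_n$ versus exiting $B(x,r)$ with $r\asymp \eps\beta^{-1/2}\log(\sqrt{\beta}/\eps)$), the geometric bound $(1-c\beta)^k$ coming from \eqref{eq:pestimate}, Lemma~\ref{lem:azuma} with $\vartheta=\sqrt{\beta}$, and the time horizon $k\asymp\beta^{-1}\log(\sqrt{\beta}/\eps)$ with $K$ chosen large relative to the drift constant. The only (minor) differences from the paper are that the paper applies optional stopping at the capped time $\tau_1\wedge\tau_2$ and bounds the bad event by $2\max_{y\in\Omega_n}|g(y)|\,\P(\tau_1>\tau_2)$, which avoids your implicit need for the uncapped hitting time $\tau$ to be almost surely finite, and that it first dispatches the regime $\beta<2\eps^2$ by the maximum principle so that the chosen integer number of steps $k$ is at least $1$ and the logarithm is bounded away from zero; you should add these two small points.
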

\begin{proof}
First, we note that if $\beta < 2\eps^2$, then the maximum principle yields
\[|u_n(x)-g(x)| \leq 2\max_{y\in \Omega_n}|g(y)| \leq 2\sqrt{2}\max_{y\in \Omega_n}|g(y)|\frac{\eps}{\sqrt{\beta}}.\]
Hence, we may assume $\beta\geq 2\eps^2$ in the remainder of the proof.

Let $(X_k)_{k\geq 0}$ be a random walk on $\Omega_n$ with transition probabilities \eqref{eq:transition} starting at $X_0=x$, and let $\F_k$ denote the $\sigma$-algebra generated by $X_0,X_1,\dots,X_k$. We can assume $x\not\in \Gamma_n$. Define the stopping times
\[\tau_1 = \inf\{k\geq 0 \, : \, X_k\in \Gamma_n\},\]
and
\[\tau_2 = \inf\{k\geq 0\, : \, |X_k-x|>r\},\]
for $0 < r \leq \dist(x,\partial \wO)$ to be determined.

Let $Y_k = u_n(X_{k\wedge \tau_1})$. We claim that $Y_k$ is a martingale with respect to $\F_k$. Indeed, we compute
\begin{align*}
\E[Y_k - Y_{k-1} | \F_{k-1}]&=\one_{\tau_1 > k-1}\E[u_n(X_{k})-u_n(X_{k-1}) | \F_{k-1}] + \one_{\tau_1 \leq k-1}\E[u_n(X_{\tau_1}) - u_n(X_{\tau_1}) | \F_{k-1}]\\
&=\one_{\tau_1 > k-1}\left(\frac{1}{d_{n,\eps}(X_{k-1})}\sum_{y\in \Omega_n}\eta_\eps(|y-X_{k-1}|)\l u_n(y) - u_n(X_{k-1})\r\right) = 0,
\end{align*}
due to \eqref{eq:bvp}, which establishes the claim.
By Doob's optional stopping theorem we have
\begin{align*}
u_n(x) - g(x)&=u_n(X_0) - g(x)\\
&=\E[u_n(X_{\tau_1\wedge\tau_2}) - g(x)]\notag \\
&=\E[g(X_{\tau_1}) - g(x) \, | \, \tau_1 \leq \tau_2] \P(\tau_1 \leq \tau_2) + \E[u_n(X_{\tau_2}) - g(x) \, | \, \tau_1 > \tau_2] \P(\tau_1>\tau_2)\notag \\
&\leq \|g\|_{\Ck{0,1}(\bar{\Omega})}(r+2\eps) + 2\max_{y\in \Omega_n}|g(y)|\P(\tau_1 > \tau_2).\notag 
\end{align*}
The opposite inequality is proved similarly, yielding
\begin{equation}\label{eq:keyug}
|u_n(x) - g(x)| \leq\|g\|_{\Ck{0,1}(\bar{\Omega})}(r+2\eps) + 2\max_{y\in \Omega_n}|g(y)|\P(\tau_1 > \tau_2).
\end{equation}

We now bound $\P(\tau_1 > \tau_2)$. By Lemma \ref{lem:azuma}
\[\P(\tau_2 \leq k) \leq 2d\exp\l-\frac{c(r-Ck\eps\sqrt{\beta})_+^2}{k\eps^2}\r.\]
Note that by \eqref{eq:pestimate} we have 
\[\P(X_k \in \Gamma_n \, | \, X_{k-1}\in \wO) = \frac{p_{n,\eps}(X_{k-1})}{d_{n,\eps}(X_{k-1})} \geq \frac{cn\beta}{Cn}\geq c\beta.\]
Since $\tau_2>k$ implies that $X_i \in \wO$ for $i=1,\dots,k$ we have
\[\P(\tau_1 > k \, |\,  \tau_2 > k) \leq (1-c\beta)^k.\]
It follows that
\begin{align*}
\P(\tau_1 > k)&=\P(\tau_1 > k \, |\,  \tau_2 > k)\P(\tau_2 > k) + \P(\tau_1 > k\,  |\,  \tau_2 \leq k)\P(\tau_2 \leq k)  \\
&\leq (1-c\beta)^k + 2d\exp\l-\frac{c(r-Ck\eps\sqrt{\beta})_+^2}{k\eps^2}\r.
\end{align*}
For any $k$, if $\tau_1 > \tau_2$ then either $\tau_1 > k$ or $\tau_2 \leq k$. Therefore
\[\P(\tau_1 > \tau_2)\leq \P(\tau_1 > k) + \P(\tau_2 \leq k) \leq \exp\left( -c\beta k \right) + 4d\exp\l-\frac{c(r-Ck\eps\sqrt{\beta})_+^2}{k\eps^2}\r.\]
Choose $k\in \Z$ so that
\begin{equation}\label{eq:kchoice}
1 + \log\left( \sqrt{\beta}\eps^{-1} \right)\leq c\beta k \leq \log\left(\sqrt{\beta}\eps^{-1} \right) + 1 + c\beta.
\end{equation}
Then $e^{-c\beta k} \leq \frac{\eps}{\sqrt{\beta}}$, and since $\beta \geq 2\eps^2$ and $c,\beta\in (0,1]$ we see that $k\geq 1$. Therefore
\[\P(\tau_1 > \tau_2)\leq \frac{\eps}{\sqrt{\beta}} + 4d\exp\l-\frac{c(r-Ck\eps\sqrt{\beta})_+^2}{k\eps^2}\r.\]
Since $\beta\geq 2\eps^2$ and $k$ satisfies \eqref{eq:kchoice} we have
\[\frac{(r-Ck\eps\sqrt{\beta})^2_+}{k\eps^2} \geq \frac{\widetilde{c}\beta\left( r - \widetilde{C}\beta^{-\frac12}\eps\log\left(\sqrt{\beta}\eps^{-1}\right)\right)^2_+}{\eps^2\log\left(\sqrt{\beta}\eps^{-1}\right)},\]
for constants $\widetilde{C}\geq 1$ and $0\leq \widetilde{c}\leq 1$. Setting $r = K\beta^{-\frac12}\eps\log(\sqrt{\beta}\eps^{-1})$ for sufficiently large $K$ and assuming $\dist(x,\partial\wO)\geq K\beta^{-\frac12}\eps\log(\sqrt{\beta}\eps^{-1})$, we have
\[\P(\tau_1 > \tau_2)\leq \frac{\eps}{\sqrt{\beta}} + 2d\exp\l-\log\left(\sqrt{\beta}\eps^{-1}\right)\r = (1+2d)\frac{\eps}{\sqrt{\beta}}.\]
The proof is completed by inserting this into \eqref{eq:keyug}.
\end{proof}

We now turn to {\bf Model 2}, and prove an error estimate from which Theorem \ref{thm:wellposed2} immediately follows. We recall in {\bf Model 2} we take
\begin{equation}\label{eq:wOtwo}
\wO=\partial_{\delta}\Omega,
\end{equation}
where $0 < \delta \leq \eps$. In Proposition \ref{prop:concentration2} and \eqref{eq:pestimate} we take $\wO' = \partial_\eps \Omega$.  In this case we have
\begin{equation}\label{eq:wclower}
\wc = \min_{x\in \partial_\eps \Omega}\eps^{-d}\Vol\l\partial_\delta \Omega\cap B(x,\eps)\r \geq c\delta\eps^{-1}.
\end{equation}

\begin{theorem}\label{thm:deltaboundary}
Let $0 < \delta \leq \eps$ and set $\wO=\partial_\delta \Omega$ and $\wO' = \partial_\eps \Omega$. Assume \eqref{eq:pestimate} and \eqref{eq:qestimate} hold. Let $g\in \Ck{0,1}(\bar{\Omega})$, let $u_n$ solve \eqref{eq:bvp}, and let $u$ be the solution of \eqref{eq:model2pde}. There exists $C,c>0$ such that if \eqref{eq:degestimate}  and \eqref{eq:Lestimate} hold with $\eps^2\leq \vt\leq c\,\eps$ then   
\begin{equation}\label{eq:boundaryestimate2}
\max_{x\in \Omega_n}|u_n(x)-u(x)|\leq \frac{C\vt}{\beta\delta}.
\end{equation}
\end{theorem}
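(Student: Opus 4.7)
The plan is to mimic the random walk strategy used in the proof of Theorem~\ref{thm:boundary}, adapted to the Model~2 geometry in which labels are randomly sprinkled in the thin strip $\partial_\delta\Omega$ rather than in a compactly contained interior set. Fix $x\in\Omega_n$, let $(X_k)_{k\geq 0}$ be the random walk on $\Omega_n$ with transitions~\eqref{eq:transition} starting at $X_0=x$, and let $\tau=\inf\{k\geq 0 : X_k\in\Gamma_n\}$. By the representation formula~\eqref{eq:RepForm}, $u_n(x)=\E[g(X_\tau)]$. Since $X_\tau\in\partial_\delta\Omega$ and $u=g$ on $\partial\Omega$, the Lipschitz regularity of $u$ and $g$ yields $|g(X_\tau)-u(X_\tau)|\leq C\delta$, reducing the task to estimating $|\E[u(X_\tau)]-u(x)|$.

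For the latter, I will use a Doob decomposition $u(X_k)=u(x)+N_k+A_k$, where $N_k$ is an $\F_k$-martingale and $A_k=\sum_{i=0}^{k-1}D_i$ is the predictable drift with $D_i=\E[u(X_{i+1})-u(X_i)\mid\F_i]$. The identity $D_i=-\eps^2\L_{n,\eps}u(X_i)\big/(d_{n,\eps}(X_i)/n)$ combined with Corollary~\ref{cor:WellPosed:Pointwise:GraphtoLLap} and $\L u=0$ then gives two regimes: when $\delta_{X_i}\geq 2\eps$ the boundary correction terms in the corollary vanish and $|\L_{n,\eps}u(X_i)|\leq C\vt/\eps$, so $|D_i|\leq C\eps\vt$; when $\delta_{X_i}<2\eps$, the $\gamma_\eps/\eps$ term contributes an $O(1/\eps)$ piece to $\L_{n,\eps}u$, so $|D_i|\leq C\eps$. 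A standard optional stopping argument (justified by boundedness of $u$ and finiteness of $\E[\tau]$, which the proof also establishes) yields $\E[u(X_\tau)]-u(x)=\E[A_\tau]$.

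The heart of the argument is to control the occupation times of the two regimes. The key inputs are: (i)~whenever $X_i\in\partial_\eps\Omega$, the one-step absorption probability $p_{n,\eps}(X_i;\partial_\delta\Omega)/d_{n,\eps}(X_i)$ is at least $c\beta\delta/\eps$, since the lower bound $\wc\geq c\delta/\eps$ in~\eqref{eq:wclower} feeds into~\eqref{eq:pestimate}; thus the expected number of steps spent in $\partial_\eps\Omega$ before absorption is $O(\eps/(\beta\delta))$; (ii)~between successive visits to $\partial_\eps\Omega$, the walk makes an interior excursion whose expected length is $O(1/\eps^2)$, obtained by coupling the Azuma-type bound of Lemma~\ref{lem:azuma} with the inward drift of Lemma~\ref{lem:qdrift} and a supersolution comparison of the form $\L_{n,\eps}\psi\geq 1$ on $\Omega_n\setminus\partial_\eps\Omega$ with $\psi=O(1)$ (for instance a rescaled quadratic in the signed distance to $\partial\Omega$). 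Together these yield an $O(1/(\eps\beta\delta))$ bound on the expected total number of interior steps.

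Assembling everything,
\[|\E[A_\tau]| \;\leq\; C\eps\cdot\frac{\eps}{\beta\delta} + C\eps\vt\cdot\frac{1}{\eps\beta\delta} \;\leq\; \frac{C(\eps^2+\vt)}{\beta\delta} \;\leq\; \frac{C\vt}{\beta\delta},\]
since $\vt\geq\eps^2$; the residual $C\delta$ from the first paragraph is also dominated by $C\vt/(\beta\delta)$, because $\beta\delta^2\leq\eps^2\leq\vt$, finishing the proof. The main obstacle I expect is the interior excursion-length bound in~(ii): Lemma~\ref{lem:azuma} alone only tracks the walk up to the first time it drifts a distance of order one, so getting a clean $O(1/\eps^2)$ hitting-time estimate for $\partial_\eps\Omega$ on an irregular random geometric graph requires supplementing Azuma with a maximum-principle/supersolution argument driven by Lemma~\ref{lem:qdrift}.
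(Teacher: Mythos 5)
Your skeleton (representation formula, Doob decomposition $u(X_k)=u(x)+N_k+A_k$, and a bound on $\E[A_\tau]$ by per-step drift times expected occupation times) is a legitimate alternative to the paper's argument, which instead reduces the global error to the error on $\Omega_n\cap\partial_{2\eps}\Omega$ plus $C\vt\eps^{-1}$ via $\E[\tau]\leq C\eps^{-2}$, proves a one-step contraction $\max_{\partial_{2\eps}\Omega}|u_n-u|\leq C\eps+(1-c\beta\delta\eps^{-1})\max_{\partial_{4\eps}\Omega}|u_n-u|$ using \eqref{eq:pestimate} and \eqref{eq:qestimate}, and closes by rearranging. However, as assembled your estimate has a genuine gap in the intermediate layer $\partial_{2\eps}\Omega\setminus\partial_\eps\Omega$. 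Your two drift regimes are split at depth $2\eps$ (drift $C\eps\vt$ for $\delta_{X_i}\geq2\eps$, drift only $C\eps$ for $\delta_{X_i}<2\eps$, the latter being unavoidable since $\partial u/\partial\bm{n}\neq0$ for the Dirichlet problem, and in any case \eqref{eq:Lestimate} is only assumed outside $\partial_{2\eps}\Omega$), but your occupation bound (i) controls only the steps in $\partial_\eps\Omega$, at depth $\eps$. Steps in the annulus $\{\eps\leq\delta_{X_i}<2\eps\}$ carry drift of size $C\eps$, yet the only count that covers them in your bookkeeping is the $O(1/(\eps\beta\delta))$ total for steps outside $\partial_\eps\Omega$; charging them there gives a contribution $C\eps\cdot\frac{1}{\eps\beta\delta}=\frac{C}{\beta\delta}$, which exceeds the target $\frac{C\vt}{\beta\delta}$ by a factor $1/\vt\gtrsim1/\eps$. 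The line $|\E[A_\tau]|\leq C\eps\cdot\frac{\eps}{\beta\delta}+C\eps\vt\cdot\frac{1}{\eps\beta\delta}$ therefore does not follow from (i) and (ii) as stated. A second, related issue: your proposed supersolution with $\L_{n,\eps}\psi\geq1$ on all of $\Omega_n\setminus\partial_\eps\Omega$ cannot be justified from the assumed estimates, since \eqref{eq:degestimate} and \eqref{eq:Lestimate} say nothing inside $\partial_{2\eps}\Omega$ and the boundary-corrected consistency there produces the $O(1/\eps)$ term $\frac{2\rho\gamma_\eps}{\eps}\frac{\partial\psi}{\partial\bm{n}}$ whose sign must be handled as in the barrier construction of Lemma~\ref{lem:barrier}.

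Both problems are repaired by the ingredient your write-up omits, and which is precisely the role of the paper's layered Step~2: use \eqref{eq:qestimate} to show that from any point of $\partial_{2\eps}\Omega\setminus\partial_\eps\Omega$ the walk enters $\partial_\eps\Omega$ within two steps with probability bounded below (each inward step lowers $\delta$ by at least $\eps/2$). By a geometric-trials/strong Markov argument this gives $\E\big[\#\{i<\tau:\ X_i\in\partial_{2\eps}\Omega\setminus\partial_\eps\Omega\}\big]\leq C\big(\E[N_\eps]+1\big)\leq \frac{C\eps}{\beta\delta}$, so the annulus steps contribute $C\eps\cdot\frac{\eps}{\beta\delta}\leq\frac{C\vt}{\beta\delta}$; and it also yields the uniform hitting-time bound you need for (ii), since the expected time to reach $\partial_{2\eps}\Omega$ is $O(\eps^{-2})$ by the continuum supersolution $\varphi$ of \eqref{eq:phidrift} together with \eqref{eq:degestimate}, \eqref{eq:Lestimate} and $\vt\leq c\eps$ (exactly as in the paper's Step~1), after which the descent through the annulus costs only a geometric number of repetitions. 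With these two additions your occupation-time argument closes and gives \eqref{eq:boundaryestimate2}; the remaining steps of your proposal (the $C\delta$ mismatch at $X_\tau$, the Wald-type summation over excursions, and the absorption of $C\delta$ and $C\eps^2/(\beta\delta)$ into $C\vt/(\beta\delta)$) are correct.
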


\begin{proof}
The proof is split into 3 parts.

1. We first show that
\begin{equation}\label{eq:interiorbound}
\max_{x\in \Omega_n}|u_n(x) - u(x)|\leq \max_{x\in \Omega_n\cap \partial_{2\eps}\Omega}|u_n(x)-u(x)| + C\vt\eps^{-1}.
\end{equation}
This estimate can be proved with the maximum principle or a martingale argument. We give the martingale argument here. Let $x\in \Omega_n$ with $\dist(x,\partial \Omega) > 2\eps$.  Let $(X_k)_{k\geq 0}$ be a random walk on $\Omega_n$ with transition probabilities \eqref{eq:transition} starting at $X_0=x$, and let $\F_k$ denote the $\sigma$-algebra generated by $X_0,X_1,\dots,X_k$.  Define the stopping time
\[\tau = \inf\{k\geq 0 \, : \, X_k\in \partial_{2\eps}\Omega\},\]
and set $Y_k= X_{k\wedge \tau}$.    We claim that
\begin{equation}\label{eq:meanest}
\E[\tau] \leq \frac{C}{\eps^2}.
\end{equation}
To see this, let $\varphi\in \Ck{3}(\bar{\Omega})$ be the solution of
\begin{equation}\label{eq:phidrift}
\left\{\begin{aligned}
-\div(\rho^2\nabla \phi) &= \rho^2&&\text{in }\Omega\\ 
\phi &=0&&\text{on }\partial \Omega.
\end{aligned}\right.
\end{equation}
By Proposition \ref{prop:rwlap} we have
\begin{align*}
\E[\phi(Y_k) - \phi(Y_{k-1})\, | \,\F_{k-1}]&=\one_{\tau > k-1}\E[\phi(X_k) - \phi(X_{k-1}) | \F_{k-1}]\\
&\leq\one_{\tau > k-1}\left( \frac{\sigma_\eta}{C_\eta}\rho^{-2}\div(\rho^2\nabla \varphi )\Big\vert_{X_{k-1}}\eps^2 + C\vt\eps\right)\\
&=\one_{\tau > k-1}C\eps(\vt-c\eps).
\end{align*}
Assume $\vt\leq \tfrac{c\,\eps}{2}$ so that $\E[\phi(Y_k) - \phi(Y_{k-1}) | \F_{k-1}] \leq -\one_{\tau>k-1}c\,\eps^2$. Then we have that 
\[Z_k := \phi(Y_k) + c\,\eps^2 k\one_{\tau>k-1}\]
is a supermartingale. Let $m\geq 0$. By Doob's optional stopping theorem  $\E[Z_{\tau\wedge m}]\leq \E[Z_0]$,  which yields
\[\E[\phi(Y_{\tau\wedge m})] + c\, \eps^2 \E[\tau\wedge m ] \leq \phi(X_0) = \phi(x).\]
By the maximum principle $\varphi\geq 0$, and so we obtain
\[\E[\tau \wedge m] \leq C\frac{\phi(x)}{\eps^2}\leq \frac{C}{\eps^2},\]
for all $m\geq 0$. Sending $m\to \infty$ establishes the claim \eqref{eq:meanest}.

By Proposition \ref{prop:rwlap} we have
\begin{align*}
\E[u(Y_k) - u(Y_{k-1})\, | \,\F_{k-1}]&=\one_{\tau_1 > k-1}\E[u(X_k) - u(X_{k-1}) | \F_{k-1}]\\
&= \one_{\tau_1 > k-1}\left( \frac{\sigma_\eta}{C_\eta}\rho^{-2}\div(\rho^2\nabla u)\Big\vert_{X_{k-1}}\eps^2 + O(\eps^2)\right)=O(\vt\eps).
\end{align*}
As in the proof of Theorem \ref{thm:boundary}, $u_n(Y_k)$ is a martingale, and so $u_n(Y_k) - u(Y_k) + C\vt\eps k$ is a submartingale. By Doob's optional stopping theorem 
\[u_n(x) - u(x)=u_n(X_0) - u(X_0)\leq \E[u_n(X_{\tau}) - u(X_\tau) + C\vt\eps\tau]\leq \E[|u_n(X_\tau)-u(X_\tau)|] + C\vt\eps\E[\tau].\]
Similarly, $u(Y_k) - u_n(Y_k) + C\vt\eps k$ is a submartingale, and so we also obtain
\[u(x) - u_n(x)\leq \E[|u_n(X_\tau)-u(X_\tau)|] + C\vt\eps\E[\tau].\]
Since $X_\tau \in \partial_{2\eps}\Omega$ and $\E[\tau]\leq C/\eps^2$ we have
\[|u_n(x) - u(x)|\leq\E[|u_n(X_\tau)-u(X_\tau)|] + C\vt\eps\E[\tau]\leq \max_{y\in \Omega_n\cap \partial_{2\eps}\Omega}|u_n(y)-u(y)| + C\vt\eps^{-1}\]
for $x\in \Omega_n$ with $\dist(x,\partial \Omega) > 2\eps$. For $x\in \Omega_n\cap \partial_{2\eps}\Omega$ the estimate above holds trivially, so this establishes \eqref{eq:interiorbound}.

2. We now estimate the boundary term in \eqref{eq:interiorbound}. In particular, we claim that 
\begin{equation}\label{eq:iterate}
\max_{x\in \Omega_n\cap \partial_{2\eps}\Omega}|u_n(x) -u(x)| \leq C\eps + (1- c\beta\delta\eps^{-1})\max_{x\in \Omega_n\cap \partial_{4\eps}\Omega}|u_n(x)-u(x)|.
\end{equation}
To see this, let $(X_k)_{k\geq 0}$ be a random walk on $\Omega_n$ with transition probabilities \eqref{eq:transition} starting at $X_0=x$. We first assume $x\in \Omega_n \cap \partial_{\eps}\Omega$.
Then by~\eqref{eq:pestimate} we have
\[\P(X_1\in \Gamma_n) = \frac{p_{n,\eps}(x;\partial_\delta\Omega)}{d_{n,\eps}(x)} \geq \frac{c\wc n\beta}{Cn}\geq c\,\beta\delta\eps^{-1}.\]
Therefore, we have
\begin{align*}
|u_n(x) - u(x)| &= |\E[u_n(X_1) - u(x)]|\\
&\leq |\E[u_n(X_1) - u(x)\, | \, X_1 \in \Gamma_n]| \P(X_1\in \Gamma_n)+|\E[u_n(X_1) - u(x) \, | \, X_1\not\in \Gamma_n]| \P(X_1\not\in \Gamma_n)\\
&\leq \E[|g(X_1) - u(X_1)|\,  | \, X_1 \in \Gamma_n]+\E[|u_n(X_1) - u(X_1)| \, | \, X_1\not\in \Gamma_n] (1-c\beta\delta\eps^{-1}) + C\eps\\
&\leq C\eps + (1-c\beta\delta\eps^{-1})\max_{y\in \Omega_n\cap \partial_{3\eps}\Omega } |u_n(y)-u(y)|.
\end{align*}
We now consider $x\in \Omega_n \cap (\partial_{3\eps/2}\Omega\setminus \partial_{\eps}\Omega)$.
By \eqref{eq:pestimate} and \eqref{eq:qestimate} 
\[\P(X_1 \in \partial_{\eps}\Omega) \geq \frac{q_{n,\eps}(x)}{d_{n,\eps}(x)}\geq c_1>0.\]
Using the bound above we have
\begin{align*}
|u_n(x) - u(x)| &= |\E[u_n(X_1) - u(x)]|\\
&\leq \E[|u_n(X_1) - u(X_1)|\, | \, X_1\in\partial_{\eps}\Omega]\P(X_1\in \partial_{\eps}\Omega)\\
&\hspace{1in}+\E[|u_n(X_1) - u(X_1)| \, | \, X_1\not\in \partial_{\eps}\Omega] \P(X_1\not\in \partial_{\eps}\Omega) + C\eps\\
&\leq \left(C\eps + (1-c\beta\delta\eps^{-1})\max_{y\in \Omega_n\cap \partial_{3\eps}\Omega } |u_n(y)-u(y)|\right)\P(X_1\in \partial_{\eps}\Omega) \\
&\hspace{1.5in}+\max_{y\in \Omega_n\cap \partial_{7\eps/2} \Omega} |u_n(y)-u(y)|(1-\P(X_1\in \partial_{\eps}\Omega)) + C\eps \\ 
&\leq C\eps + (1-c_1c\beta\delta\eps^{-1})\max_{y\in \Omega_n\cap \partial_{7\eps/2} \Omega} |u_n(y)-u(y)|.
\end{align*}
Repeating the same argument for $x\in\Omega_n\cap( \partial_{2\eps}\Omega\setminus \partial_{3\eps/2}\Omega)$ completes the proof of the claim.

3. Combining \eqref{eq:interiorbound} with \eqref{eq:iterate}, and noting $\vt\geq \eps^2$, we obtain
\begin{equation}\label{eq:usefulbound}
\max_{x\in \Omega_n}|u_n(x) - u(x)| \leq (1- c\beta\delta\eps^{-1})\max_{y\in \Omega_n\cap \partial_{4\eps}\Omega}|u_n(y)-u(y)|+ C\vt\eps^{-1}.
\end{equation}
Since
\[\max_{y\in \Omega_n\cap \partial_{4\eps}\Omega}|u_n(y)-u(y)|\leq \max_{x\in \Omega_n}|u_n(x) - u(x)|\]
we can rearrange \eqref{eq:usefulbound} to obtain
\[\max_{x\in \Omega_n}|u_n(x) - u(x)| \leq C\beta^{-1}\delta^{-1}\vt,\]
which completes the proof.
\end{proof}

\subsection{Proofs of well-posedness results}
\label{sec:wellposedproofs}

We now complete the proofs of the well-posedness results. Our proof uses the maximum principle and a barrier argument. We first define the barrier and establish some technical results.

\begin{lemma}
\label{lem:barrier}
Assume {\bf (A1-3)} and {\bf Model 1} hold.
For $\alpha>0$ define
\begin{equation}\label{eq:enlarged}
\wO_\alpha = \{x\in \tilde{\Omega} \, : \, \dist(x,\partial\wO) > \alpha\}.
\end{equation}
Let $w_\alpha$ solve 
\begin{equation}\label{eq:WellPosed:Subset:Hard:ELEq}
\left\{\begin{aligned}
\L w_\alpha &= 0&&\text{in }\Omega\setminus \wO_{\alpha+2\eps}\\ 
w_\alpha &=g&&\text{in } \wO_{\alpha+2\eps}\\
\frac{\partial w_\alpha}{\partial \bm{n}} &=0&&\text{on }\partial \Omega,
\end{aligned}\right.
\end{equation} 
and $\varphi_\alpha$ solve
\begin{equation}\label{eq:phi}
\left\{\begin{aligned}
\L \phi_\alpha &= 1&&\text{in }\Omega\setminus \wO_{\alpha+2\eps}\\ 
\phi_\alpha &=g&&\text{in } \wO_{\alpha+2\eps}\\
\frac{\partial \varphi_\alpha}{\partial \bm{n}} &=1&&\text{on }\partial \Omega.
\end{aligned}\right.
\end{equation}
Then there exists $C>c>0$ such that for all $0 \leq \alpha \leq c$ and $\eps<\vartheta<\frac{1}{\eps}$, with probability at least $1-Cn\exp(-cn\eps^{d+2}\vartheta^2)$ we have $\|\varphi_\alpha\|_{\Linfty(\Omega)}\leq C$, $\|w_\alpha - u\|_{\Linfty(\Omega)} \leq C(\alpha+\eps)$,
\begin{align}
\la \L_{n,\eps}w_\alpha(x)\ra & \leq C\l\gamma_\eps(x) +\vartheta\r \label{eq:wellposed:wellposedproofs:Lapw} \\
\la \L_{n,\eps}\varphi_\alpha(x) - \frac{\sigma_1\l\frac{\delta_{x}}{\eps}\r}{\sigma_\eta} - \frac{2\rho(x)\gamma_{\eps}(x)}{\eps} \ra & \leq C\l\gamma_\eps(x) +\vartheta\r \label{eq:wellposed:wellposedproofs:Lapphi}
\end{align}
for all $x\in \Omega_n\setminus \wO_{\alpha}$, where $u$ is the solution of \eqref{eq:model1pde}.
\end{lemma}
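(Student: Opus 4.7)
The plan is to use the pointwise consistency estimate in Corollary~\ref{cor:WellPosed:Pointwise:GraphtoLLap} as the main tool, combined with uniform continuum estimates on the barrier functions. The probabilistic content of the lemma enters only through that corollary; the $\Linfty$ bounds on $\varphi_\alpha$ and on $w_\alpha - u$ are purely deterministic PDE statements.

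First I would establish the needed continuum regularity. Since $\partial\Omega$ and $\partial\wO$ are of class $\Ck{3}$ and, for $\alpha$ and $\eps$ small, the Dirichlet region $\wO_{\alpha+2\eps}$ sits at positive distance from the Neumann boundary $\partial\Omega$, the mixed Dirichlet--Neumann problems~\eqref{eq:WellPosed:Subset:Hard:ELEq} and~\eqref{eq:phi} admit unique $\Ck{3}(\bar{\Omega})$ solutions with norms bounded uniformly in $\alpha, \eps$ by classical Schauder theory.  In particular $\|\varphi_\alpha\|_{\Linfty(\Omega)}\leq C$.  For the bound $\|w_\alpha-u\|_{\Linfty}\leq C(\alpha+\eps)$, set $v=w_\alpha-u$ and observe that both $w_\alpha$ and $u$ equal $g$ on $\wO_{\alpha+2\eps}$, both satisfy $\L\cdot=0$ with Neumann data on $\partial\Omega$ in $\Omega\setminus\bar\wO$, and on the thin strip $\wO\setminus\wO_{\alpha+2\eps}$ we have $\L w_\alpha=0$ but $u=g$.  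Thus $v$ solves an elliptic mixed BVP on $\Omega\setminus\wO_{\alpha+2\eps}$ with homogeneous Dirichlet data, zero Neumann data, and a bounded source supported in a strip of width $O(\alpha+\eps)$; a standard barrier (or Green's function) argument then gives the claimed rate.

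For the pointwise Laplacian estimates~\eqref{eq:wellposed:wellposedproofs:Lapw} and~\eqref{eq:wellposed:wellposedproofs:Lapphi} I would apply Corollary~\ref{cor:WellPosed:Pointwise:GraphtoLLap} directly to $w_\alpha$ and $\varphi_\alpha$.  This yields, with probability at least $1-Cn\exp(-cn\eps^{d+2}\vartheta^2)$, the identity
\[
\L_{n,\eps}w_\alpha(x) = \frac{\sigma_1(\delta_x/\eps)}{\sigma_\eta}\L w_\alpha(x) + \frac{2\rho(x)\gamma_\eps(x)}{\eps}\frac{\partial w_\alpha}{\partial\bm{n}}(x) - \frac{\sigma_2(\delta_x/\eps)}{\rho(x)}\frac{\partial}{\partial\bm{n}}\!\left(\rho^2\frac{\partial w_\alpha}{\partial\bm{n}}\right)(x) + O(\vartheta)
\]
for every $x\in\Omega_n\setminus\wO_\alpha$, which is contained in $\Omega_n\setminus\wO_{\alpha+2\eps}$ because $\wO_{\alpha+2\eps}\subset\wO_\alpha$; an analogous identity holds for $\varphi_\alpha$.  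Now $\L w_\alpha\equiv 0$ on this set, and $\partial w_\alpha/\partial\bm{n}=0$ on $\partial\Omega$ combined with Taylor expansion gives $\partial w_\alpha/\partial\bm{n}(x)=O(\delta_x)=O(\eps)$ whenever $\delta_x<2\eps$ (outside of which $\gamma_\eps$ vanishes), so the second term is $O(\gamma_\eps)$.  The analogous step for $\varphi_\alpha$ uses $\L\varphi_\alpha=1$ and $\partial\varphi_\alpha/\partial\bm{n}=1$ on $\partial\Omega$ to extract the leading contributions $\sigma_1(\delta_x/\eps)/\sigma_\eta$ and $2\rho(x)\gamma_\eps(x)/\eps$ on the nose, with the Taylor remainder again of size $O(\gamma_\eps)$.

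The main obstacle I expect is the $\sigma_2$ correction term. A naive bound only makes it $O(1)$ in the boundary layer $\delta_x<2\eps$, which is too crude; what is needed is the comparison $|\sigma_2(\delta_x/\eps)|\leq C\gamma_\eps(x)$ uniformly in $x$.  I would prove this by direct computation from the definitions~\eqref{eq:WellPosed:Pointwise:gamma} and~\eqref{eq:WellPosed:Pointwise:sigma2}: both quantities are strictly positive at $\delta_x=0$, and both vanish at the outer edge $\delta_x=2\eps$ at the same rate $(2-\delta_x/\eps)^{(d+1)/2}$, as one sees from an explicit spherical-cap expansion against the weight $\eta$.  Once this comparison is available, the $\sigma_2$ term is bounded by $C\gamma_\eps$ (using the uniform $\Ck{3}$ bounds on $w_\alpha,\varphi_\alpha$ to control $\partial/\partial\bm{n}(\rho^2\partial\cdot/\partial\bm{n})$ by an absolute constant), and collecting all the pieces yields the stated inequalities.
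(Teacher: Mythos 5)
Your outline follows essentially the same route as the paper: uniform elliptic estimates for $w_\alpha,\varphi_\alpha$, a maximum-principle/barrier argument giving $\|w_\alpha-u\|_{\Linfty(\Omega)}\leq C(\alpha+\eps)$, Corollary~\ref{cor:WellPosed:Pointwise:GraphtoLLap} applied to the barriers together with Taylor expansion of the normal derivatives at $\partial\Omega$ (so that $\tfrac{\gamma_\eps}{\eps}\tfrac{\partial w_\alpha}{\partial\bm{n}}=O(\gamma_\eps)$ and $\tfrac{\gamma_\eps}{\eps}\bigl(\tfrac{\partial \varphi_\alpha}{\partial\bm{n}}-1\bigr)=O(\gamma_\eps)$), and a comparison of the $\sigma_2$ correction with $\gamma_\eps$, which is exactly the content of the paper's Lemma~\ref{lem:WellPosed:Subset:Hard:gamma}. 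Two small points of care: the solutions are $\Ck{3}$ only on $\bar{\Omega}\setminus\wO_{\alpha+2\eps}$ (the globally glued functions may kink on $\partial\wO_{\alpha+2\eps}$), but since $\eta_\eps$ vanishes beyond range $2\eps$, for $x\in\Omega_n\setminus\wO_\alpha$ the graph Laplacian only sees values outside $\wO_{\alpha+2\eps}$, so the corollary applies to a smooth extension; this matches the paper's level of detail.

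The one step where your claims are inaccurate is the $\sigma_2$ comparison. First, $\sigma_2(0)=0$, not strictly positive: by the symmetry $z_d\mapsto -z_d$, $\int_{B(0,2)\cap\{z_d>0\}}\eta(|z|)z_d^2\,\dd z=\int_{B(0,2)\cap\{z_d>0\}}\eta(|z|)z_1^2\,\dd z=\tfrac12\sigma_\eta$ (harmless, since near $t=0$ the comparison quantity is bounded below). Second, the specific rate $(2-\delta_x/\eps)^{(d+1)/2}$ presumes $\eta$ jumps at radius $2$; under {\bf (A3)} the support radius is only some $T\in[1,2]$ and the vanishing rate depends on how $\eta$ decays there — what matters is only that $|\sigma_2(t)|/\sigma_3(t)$ stays bounded as $t\to T^-$, where $\sigma_3(t)=\int_{B(0,2)\cap\{z_d\geq -t\}}\eta(|z|)z_d\,\dd z$ is the half-space profile of $\gamma_\eps$; this is what the paper proves via L'H\^opital, and your cap expansion gives the same conclusion. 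Third, and most substantively, the clean inequality $|\sigma_2(\delta_x/\eps)|\leq C\gamma_\eps(x)$ is too strong: $\gamma_\eps$ in \eqref{eq:WellPosed:Pointwise:gamma} is computed over the true region $\Omega\cap B(x,2\eps)$, so it equals $\sigma_3(\delta_x/\eps)$ only up to a curvature correction of size $O(\eps)$, and in the thin zone where $\sigma_3(\delta_x/\eps)\lesssim\eps$ this correction can dominate (even making $\gamma_\eps$ negative) while $\sigma_2(\delta_x/\eps)\neq 0$. The provable statement is $|\sigma_2(\delta_x/\eps)|\leq C\bigl(\gamma_\eps(x)+\eps\bigr)$, exactly as in Lemma~\ref{lem:WellPosed:Subset:Hard:gamma}, which your argument yields once you compare $\sigma_2$ with $\sigma_3$ and then $\sigma_3$ with $\gamma_\eps+O(\eps)$; the extra $\eps$ is absorbed into the $C\vartheta$ term since $\vartheta>\eps$, so the lemma as stated still follows.
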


\begin{proof}
By elliptic regularity~\cite{gilbarg2015elliptic} we have that $w_\alpha$ and $\varphi_\alpha$ are $\Ck{3}(\bar{\Omega}\setminus\tilde{\Omega})$ and moreover
\[ \sup_{0\leq\alpha\leq c} \max\{\|w_\alpha\|_{\Ck{3}(\bar{\Omega}\setminus\wO)},\|\varphi_\alpha\|_{\Ck{3}(\bar{\Omega}\setminus\wO)}\}<+\infty. \]
By Corollary~\ref{cor:WellPosed:Pointwise:GraphtoLLap}, for $x\in \Omega_n\setminus\wO_{\alpha}$ we have, with probability at least $1-Cne^{-cn\eps^{d+2}\vartheta^2}$,
\[ \L_{n,\eps}w_\alpha(x) = \frac{2\rho(x)\gamma_\eps(x)}{\eps} \frac{\partial w_\alpha}{\partial\bm{n}}(x) - \frac{\sigma_2\l\tfrac{\delta_{x}}{\eps}\r}{\rho(x)} \frac{\partial}{\partial\bm{n}} \l\rho^2\frac{\partial w_\alpha}{\partial \bm{n}}\r(x) + O(\vartheta) \]
for any $\eps\leq\vartheta\leq \frac{1}{\eps}$.
So, since $\frac{\gamma_\eps(x)}{\eps} \frac{\partial w_\alpha}{\partial \bm{n}}(x) = O\l\frac{\delta_{x}}{\eps}\mathds{1}_{\delta_{x}\leq 2\eps}\r = O(\gamma_\eps(x))$,
\[ \L_{n,\eps}w_\alpha(x) = O\l \gamma_{\eps_n}(x) + \la \sigma_2\l\tfrac{\delta_{x}}{\eps}\r \ra + \vartheta\r. \]
By the lemma below $\la \sigma_2\l\tfrac{\delta_{x}}{\eps}\r \ra = O\l\gamma_\eps(x)+\eps\r$ which completes the proof of~\eqref{eq:wellposed:wellposedproofs:Lapw}.

For~\eqref{eq:wellposed:wellposedproofs:Lapphi}, we again apply Corollary~\ref{cor:WellPosed:Pointwise:GraphtoLLap} to imply that, with probability at least $1-Cne^{-cn\eps^{d+2}\vartheta^2}$,
\[ \L_{n,\eps}\varphi_\alpha(x) = \frac{\sigma_1\l\tfrac{\delta_{x}}{\eps}\r}{\sigma_\eta} + \frac{2\rho(x)\gamma_\eps(x)}{\eps} \frac{\partial \varphi_\alpha}{\partial \bm{n}}(x) - \frac{\sigma_2\l\tfrac{\delta_{x}}{\eps}\r}{\rho(x)} \frac{\partial}{\partial\bm{n}} \l\rho^2\frac{\partial \varphi_\alpha}{\partial \bm{n}}\r(x) + O(\vartheta) \]
for any $\eps\leq\vartheta\leq \frac{1}{\eps}$.
Hence,
\begin{align*}
\L_{n,\eps}\varphi_\alpha(x) - \frac{\sigma_1\l\tfrac{\delta_{x}}{\eps}\r}{\sigma_\eta} - \frac{2\rho(x)\gamma_\eps(x)}{\eps} & = \frac{2\rho(x)\gamma_\eps(x)}{\eps}\l \frac{\partial \varphi_\alpha}{\partial \bm{n}}(x) - 1\r + O\l \la \sigma_2\l\tfrac{\delta_{x}}{\eps}\r\ra + \eps+\vartheta \r \\
 & = O\l\gamma_{\eps}(x) + \la \sigma_2\l\tfrac{\delta_{x}}{\eps}\r\ra + \eps+\vartheta \r
\end{align*}
since $\frac{\partial \varphi_\alpha}{\partial \bm{n}}(x) = 1+O(\delta_{x})$ for $\delta_{x}\leq 2\eps$.
We conclude~\eqref{eq:wellposed:wellposedproofs:Lapphi} by applying the lemma below.

To show $\|w_\alpha - u\|_{\Linfty(\Omega)} \leq C(\alpha+\eps)$ we let $v_\alpha = w_\alpha-u$ and note that $\cL v_\alpha = 0$ in $\Omega\setminus \tilde{\Omega}$.
Hence the maximum of $v_\alpha$ is achieved on $\tilde{\Omega}\setminus \tilde{\Omega}_{\alpha+2\eps}$.
Since $v_\alpha = 0$ on $\partial \tilde{\Omega}_{\alpha+2\eps}$ and is Lipschitz continuous (with Lipschitz constant independent of $\alpha$) then $|v_\alpha(x)|\leq C(\alpha+\eps)$ on $\tilde{\Omega}\setminus \tilde{\Omega}_{\alpha+2\eps}$.
A similar argument with the minimum implies that the minimum of $v_\alpha$ is achieved on $\tilde{\Omega}\setminus \tilde{\Omega}_{\alpha+2\eps}$.
Hence $\|v_\alpha\|_{\Linfty(\Omega)}\leq C(\alpha+\eps)$ as required.
\end{proof}

\begin{lemma}
\label{lem:WellPosed:Subset:Hard:gamma}
Assume {\bf (A1,3)}. Then, there exists $C>0$ such that
\[ \la\sigma_2\l \tfrac{\delta_x}{\eps} \r\ra \leq C\l\gamma_\eps(x) + \eps\r \]
for all $x\in \Omega$.
\end{lemma}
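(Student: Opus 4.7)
The estimate is immediate when $\delta_x\geq 2\eps$: $\sigma_2(\delta_x/\eps)=0$ by the cut-off in the definition of $\sigma_2$, and $\gamma_\eps(x)=0$ because $B(x,2\eps)\subset\Omega$ and the integrand $\eta_\eps(|x-y|)(x-y)\cdot\bm{n}(x)$ is then odd and integrated over a symmetric ball. So I would focus on $\delta_x<2\eps$, and the plan is to compare $\sigma_2(\delta_x/\eps)$ and $\gamma_\eps(x)$ through a common ``flat half-ball'' quantity
\[ \gamma_0(t):=\int_{B(0,2)\cap\{z_d>-t\}}\eta(|z|)z_d\,\dd z, \]
first showing $\gamma_\eps(x)=\gamma_0(\delta_x/\eps)+O(\eps)$, then $|\sigma_2(t)|\leq C\gamma_0(t)$ pointwise in $t\geq 0$.

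For the first step I would reuse the coordinate set-up from the proof of Theorem~\ref{thm:WellPosed:Pointwise:NLLaptoLLap}: place $y_x=0$ at the closest boundary point so that $x=\delta_x e_d$, $\bm{n}(x)=-e_d$, and $\partial\Omega$ is locally the graph $y_d=F(\tilde y)$ with $F(0)=0$, $\nabla F(0)=0$, $F\in\Ck{3}$. Under the substitution $z=(y-x)/\eps$ one obtains $\gamma_\eps(x)=\int_{R_\eps}\eta(|z|)z_d\,\dd z$, where $R_\eps:=\{z\in B(0,2):x+\eps z\in\Omega\}$. Since $F(\eps\tilde z)=O(\eps^2)$, the symmetric difference between $R_\eps$ and $R_0:=B(0,2)\cap\{z_d>-\delta_x/\eps\}$ is contained in a strip of width $O(\eps)$ in the $z_d$ direction, hence has Lebesgue measure $O(\eps)$. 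Because the integrand is uniformly bounded, this yields $\gamma_\eps(x)=\gamma_0(\delta_x/\eps)+O(\eps)$.

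For the second step, the radial symmetries $\int_{B(0,2)}\eta(|z|)z_d\,\dd z=0$ and $\int_{B(0,2)}\eta(|z|)(z_d^2-z_1^2)\,\dd z=0$ let me rewrite both $\gamma_0$ and $\sigma_2$ as integrals over the common set $S_t:=B(0,2)\cap\{z_d\leq -t\}$:
\[ \gamma_0(t)=\int_{S_t}\eta(|z|)|z_d|\,\dd z\geq 0,\qquad \sigma_2(t)=\int_{S_t}\eta(|z|)(z_1^2-z_d^2)\,\dd z. \]
Set $R:=\sup\{r>0:\eta(r)>0\}\in(0,2]$; both $\gamma_0$ and $\sigma_2$ vanish on $[R,\infty)$. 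For $t\in[R/2,R]$, the bound $|z_d|\geq t\geq R/2$ on $S_t$ combined with $z_1^2+z_d^2\leq 4$ gives the pointwise estimate $z_1^2+z_d^2\leq (8/R)|z_d|$, hence $|\sigma_2(t)|\leq (8/R)\gamma_0(t)$. For $t\in[0,R/2]$, the map $t\mapsto \gamma_0(t)$ is non-increasing (as $S_t$ shrinks), so $\gamma_0(t)\geq \gamma_0(R/2)>0$ (the latter because $\eta>0$ on $|z|<R$ and $\{|z|<R,\ z_d\leq -R/2\}$ has positive measure); combined with the uniform bound on $|\sigma_2|$, this yields $|\sigma_2(t)|\leq C\gamma_0(t)$ with a constant depending only on $\eta$ and $d$.

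Combining the two steps gives $|\sigma_2(\delta_x/\eps)|\leq C\gamma_0(\delta_x/\eps)\leq C(\gamma_\eps(x)+C'\eps)\leq C''(\gamma_\eps(x)+\eps)$, as required. The main technical point requiring care is the comparison near $t=R$, where both $\sigma_2$ and $\gamma_0$ vanish: the trick that avoids any delicate vanishing-rate analysis is rewriting both quantities as integrals over the common set $S_t$ via the radial symmetry of $\eta$, after which a single pointwise estimate on the integrand ratio suffices.
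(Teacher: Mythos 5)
Your proof is correct, and while your first step (showing $\gamma_\eps(x)=\gamma_0(\delta_x/\eps)+O(\eps)$ via the boundary-graph coordinates, where your $\gamma_0$ is the paper's $\sigma_3$) coincides with the paper's computation, your second step takes a genuinely different route to the key comparison $|\sigma_2(t)|\leq C\,\sigma_3(t)$. The paper differentiates $\sigma_2$ and $\sigma_3$ in $t$ and applies L'H\^opital's rule at the endpoint $T$ of the support of $\eta$ to show $\lim_{t\to T^-}|\sigma_2(t)|/\sigma_3(t)=T$, then bounds the ratio on $[0,\xi]$ by continuity (using that $\sigma_3$ is bounded away from zero there). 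You instead use the full-ball symmetries $\int_{B(0,2)}\eta(|z|)z_d\,\dd z=0$ and $\int_{B(0,2)}\eta(|z|)(z_d^2-z_1^2)\,\dd z=0$ to rewrite both quantities as integrals over the common cap $S_t=B(0,2)\cap\{z_d\leq -t\}$, and then for $t\geq R/2$ dominate the integrand pointwise, $z_1^2+z_d^2\leq 4\leq \tfrac{8}{R}|z_d|$, which handles the simultaneous vanishing at $t=R$ with no limit computation, while for $t\leq R/2$ you use monotonicity of $\gamma_0$ and $\gamma_0(R/2)>0$ together with the trivial bound $|\sigma_2|\leq 2\sigma_\eta$. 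Your argument buys a more elementary and robust proof: it avoids differentiation under the integral sign, the L'H\^opital limit, and any discussion of the rate at which $\sigma_2,\sigma_3$ vanish at the support endpoint, using only the monotonicity and compact support of $\eta$; the paper's route, on the other hand, yields the sharper asymptotic information that the ratio tends to $T$ at the endpoint, which is not needed for the lemma. (Both proofs share the same harmless convention that the $O(\eps)$ error in $\gamma_\eps(x)=\gamma_0(\delta_x/\eps)+O(\eps)$ is absorbed into the $C\eps$ term of the final estimate.)
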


\begin{proof}
Note that
\begin{align*}
\gamma_\eps(x) &= \frac{1}{\eps}\int_{\Omega\cap B(x,2\eps)}\eta_\eps\l |x-y|\r (x-y)\cdot \bm{n}(x) \, \dd y \\
&=-\int_{\eps^{-1}\l \Omega-x \r\cap B(0,2)}\eta(|z|) z\cdot \bm{n}(x)\, \dd z \\
&=\int_{B(0,2)\cap\{z_d \geq -\frac{\delta_x}{\eps}\}}\eta(|z|) z_d\, \dd z + O(\eps)  = \sigma_3\l\tfrac{\delta_x}{\eps}\r + O(\eps)
\end{align*}
where $\sigma_3(t) = \int_{B(0,2)\cap\{z_d \geq -t\}}\eta(|z|) z_d\, \dd z$.
We show that there exists $C>0$ such that $|\sigma_2(t)|\leq C \sigma_3(t)$, when combined with the above this completes the proof.

Assume $\eta(t) = 0$ for all $t>T$ and $\eta(t) > 0$ for $t<T$, from our assumptions $T\in [1,2]$.
Since $\sigma_3(t) = - \int_{B(0,2)\cap \{z_d\leq -t\}} \eta(|z|) z_d\,\dd z$, it is straightforward to show that $\sigma_3(t) = 0$ if and only if $t>T$.
It is also straightforward to check that $\sigma_2(t) = 0$ if $t\geq T$.
Hence, $\sigma_3(t) = 0$ implies $\sigma_2(t)=0$.
By L'H\^{o}pital's rule (and the Fundamental Theorem of Calculus),
\begin{align*}
\lim_{t\to T^-} \frac{|\sigma_2(t)|}{\sigma_3(t)} & = -\lim_{t\to T^-} \frac{\sigma_2(t)}{\sigma_3(t)} = -\lim_{t\to T^-} \frac{\dot{\sigma}_2(t)}{\dot{\sigma}_3(t)} \\
 & = \lim_{t\to T^-} \frac{\int_{[-2,2]^{d-1}} \eta\l \l |\tilde{z}|^2+t^2\r^\frac12\r (t^2-z_1^2) \, \dd \tilde{z}}{t\int_{[-2,2]^{d-1}} \eta\l \l |\tilde{z}|^2+t^2\r^\frac12\r \, \dd \tilde{z}} \\
 & = \lim_{t\to T^-} \l t - \frac{\int_{[-2,2]^{d-1}} \eta\l \l |\tilde{z}|^2+t^2\r^\frac12\r z_1^2 \, \dd \tilde{z}}{t\int_{[-2,2]^{d-1}} \eta\l \l |\tilde{z}|^2+t^2\r^\frac12\r \, \dd \tilde{z}} \r
\end{align*}
where we again adopt the notation $z=(\tilde{z},z_d)\in \bbR^{d-1}\times \bbR$.
We notice that $\eta\l \l |\tilde{z}|^2+t^2\r^\frac12\r = 0$ for all $|\tilde{z}|^2+t^2\geq T^2$, hence we can assume $z_1^2\leq T^2-t^2$.
Now we have,
\[ 0\leq \frac{\int_{[-2,2]^{d-1}} \eta\l \l |\tilde{z}|^2+t^2\r^\frac12\r z_1^2 \, \dd \tilde{z}}{t\int_{[-2,2]^{d-1}} \eta\l \l |\tilde{z}|^2+t^2\r^\frac12\r \, \dd \tilde{z}} \leq \frac{T^2-t^2}{t} \to 0. \]
Hence, $\lim_{t\to T^-} \frac{|\sigma_2(t)|}{\sigma_3(t)} = T$.
In particular, there exists $0<\xi<T$ such that for all $t\in [\xi,T]$ we have $\frac{|\sigma_2(t)|}{\sigma_3(t)}\leq 2T$.
And since $\frac{|\sigma_2(t)|}{\sigma_3(t)}$ is uniformly continuous on $[0,\xi]$ it is bounded, therefore there exists some $C\in [2T,+\infty)$ such that $\frac{|\sigma_2(t)|}{\sigma_3(t)}\leq C$.
This completes the proof.
\end{proof}

We now have the proof of Theorem \ref{thm:wellposed1}(i).
\begin{proof}[Proof of Theorem \ref{thm:wellposed1}(i)]
By Theorem \ref{thm:boundary}, Propositions \ref{prop:concentration1}, \ref{prop:concentration2}, and \ref{prop:asymp}, we have
\begin{equation}\label{eq:boundary}
\max_{x\in \Omega_n\cap \wO_\alpha} |u_n(x) - g(x)| \leq \frac{C\eps}{\sqrt{\beta}}\log\l\frac{\sqrt{\beta}}{\eps}\r,
\end{equation}
with probability at least $1-Cn\exp\left( -cn\beta\eps^d \right)$, where $\alpha:=K\beta^{-\frac12}\eps\log(\sqrt{\beta}\eps^{-1})$ and $\wO_\alpha$ is defined in \eqref{eq:enlarged}. We define
\[v_\alpha(x)=\begin{cases}
w_\alpha(x)+M\vartheta\varphi_\alpha(x),&\text{if }x\in \Omega\setminus \wO_{\alpha+2\eps}\\
g(x),&\text{otherwise,}\end{cases}\]
where $w_\alpha$ and $\varphi_\alpha$ are defined in Lemma~\ref{lem:barrier}, and $M$ will be chosen shortly. By  Lemma~\ref{lem:barrier} with probability at least $1-Cn\exp(-cn\eps^{d+2}\vartheta^2)$ we have
\begin{align*}
\L_{n,\eps}v_\alpha(x) & \geq \l \frac{2M\rho(x)\vartheta}{\eps} - C(1+M\vartheta) \r \gamma_\eps(x) + \l \frac{M\sigma_1\l\frac{\delta_{x}}{\eps}\r}{\sigma_\eta} - C(1+M\vartheta) \r \vartheta \\
 & \geq \l 2M\rho(x) - C(1+M\vartheta) \r \gamma_\eps(x) + \l \frac{M}{2} - C(1+M\vartheta) \r \vartheta
\end{align*}
for all $x\in \Omega_n\setminus \wO_\alpha$,  since $\sigma_1\l\frac{\delta_{x}}{\eps}\r\geq \frac{\sigma_\eta}{2}$. There exists $c>0$ such that for $M$ sufficiently large, and all $\eps<\vartheta<c$ we have $\L_{n,\eps}v_\alpha(x) > 0$ for all $x\in \Omega_n\setminus \wO_\alpha$. 
   
Since $\L_{n,\eps}u_n(x)=0$ for $x\in \Omega_n\setminus \Gamma_n$, we have that $\L_{n,\eps}(u_n - v_\alpha)(x) < 0$ for all $x\in \Omega_n\setminus (\Gamma_n\cup \wO_\alpha)$. If $u_n - v_\alpha$ attained its maximum value over $\Omega_n$ at any point $x\in \Omega_n\setminus (\Gamma_n\cup \wO_\alpha)$, then we would have $\L_{n,\eps}(u_n - v_\alpha)(x) \geq 0$, which is a contradicition. Therefore $u_n-v_\alpha$ attains its maximum value over $\Omega_n$ at some $x_*\in \Gamma_n\cup \wO_\alpha$. If $x_*\in \Gamma_n\subset \wO$ and $x_*\not\in \wO_{\alpha+2\eps}$, then 
\[u_n(x_*)-v_\alpha(x_*) =g(x_*) - w_\alpha(x_*) - M\vartheta\varphi_\alpha(x_*) \leq C(\alpha  +\vartheta),\]
due to Lemma \ref{lem:barrier} and the definition of $w_\alpha$. 
If $x_*\in \Gamma_n\cap \wO_{\alpha+2\eps}$, then $u_n(x_*)-v_\alpha(x_*) = 0$.
If $x_*\in \Omega_n\cap \wO_\alpha$ then by \eqref{eq:boundary} we have
\[u_n(x_*)-v_\alpha(x_*) \leq g(x_*) + \frac{C\eps}{\sqrt{\beta}}\log\l\frac{\sqrt{\beta}}{\eps}\r - v_\alpha(x). \]
Now,
\[ g(x_*) - v_\alpha(x_*) = \lb \begin{array}{ll} g(x_*) - w_\alpha(x_*) - M\vartheta\varphi_\alpha(x_*) & \text{if } x_*\in \wO_\alpha\setminus \wO_{\alpha+2\eps} \\ 0 & \text{if } x_*\in \wO_{\alpha+2\eps} \end{array} \rd \leq C(\vartheta +\eps). \]
It follows that
\[\max_{x\in\Omega_n}(u_n(x)-v_\alpha(x))\leq  C(\alpha  +\vartheta)\]
with probability at least $1-Cn\exp(-cn\eps^{d+2}\vartheta^2) - Cn\exp\left( -cn\beta\eps^d \right)$. Invoking Lemma \ref{lem:barrier} again, we have
\[|v_\alpha - u| \leq |w_\alpha - u| + M\vartheta |\varphi_\alpha| \leq C(\alpha + \vartheta),\]
and so
\[\max_{x\in\Omega_n}(u_n(x)-u(x))\leq  C(\alpha  +\vartheta).\]
We obtain the opposite inequality similarly, and complete the proof by noting that $\beta\geq \eps^2$ $\vartheta\leq c$ implies that $n\beta \eps^d \geq n\eps^{d+2}\vartheta^2$.
\end{proof}

To extend the result to the regime $\eps_n\leq  \l\frac{\log n}{n}\r^{\frac{1}{d+2}}$ and therefore beyond the regime where the Laplacian is pointwise consistent we will rely on $\Gamma$-convergence and $\TL^2$ compactness.
We start with $\TL^2$ compactness of minimizers.

\begin{proposition}
\label{prop:WellPosed:Subset:Hard:Compact}
Under the assumptions of Theorem~\ref{thm:wellposed1}(ii) we have that, with probability one, $\{u_n\}_{n\in\bbN}$ is precompact in $\TL^2$ and furthermore, if $u$ is any cluster point of $\{u_n\}_{n\in\bbN}$, then $u=g$ on $\tilde{\Omega}$.
\end{proposition}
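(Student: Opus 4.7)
The strategy is variational: obtain a uniform graph Dirichlet energy bound, deduce $\TL^2$ precompactness from the standard compactness result, and then argue that the labeling constraint survives in the limit via a discrete Poincar\'e--type estimate. For the energy bound, minimality of $u_n$ subject to $u_n=g$ on $\Gamma_n$ yields $\cE_{n,\eps_n}(u_n)\leq \cE_{n,\eps_n}(g|_{\Omega_n})$, and the right-hand side converges almost surely to $\sigma_\eta\int_\Omega|\nabla g|^2\rho^2\,\dd x$ by concentration of random double sums together with a Taylor expansion of $g$ in the spirit of Proposition~\ref{prop:asymp}. The maximum principle in~\eqref{eq:BVP}, or the random-walk representation~\eqref{eq:RepForm}, gives $\|u_n\|_{\Linfty(\mu_n)}\leq\|g\|_{\Linfty}$. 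Precompactness of $\{u_n\}$ in $\TL^2$ then follows from the standard $\TL^p$ compactness result for sequences with uniformly bounded nonlocal $2$-Dirichlet energy (Proposition~\ref{prop:Prelim:TLp:GamConvEn}).

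To identify any cluster point $u$ with $g$ on $\wO$, I would establish the discrete estimate
\[
\frac{1}{n}\sum_{x\in\Omega_n\cap\wO_{\eps_n}}|u_n(x)-g(x)|^2\;\leq\; C\,\frac{\eps_n^2}{\beta_n}+C\eps_n^2\quad\text{a.s.,}
\]
where $\wO_{\eps_n}:=\{x\in\wO:\dist(x,\partial\wO)>\eps_n\}$. Two ingredients enter. First, a Bernstein-type bound in the spirit of Proposition~\ref{prop:concentration2} applied to the Bernoulli labels shows that, under $n\beta_n\eps_n^d\gg\log n$, every $x\in\Omega_n\cap\wO_{\eps_n}$ satisfies $|\Gamma_n\cap B(x,\eps_n)|\geq c\,\beta_n n\eps_n^d$ with probability $1-o(1)$ (via a covering argument and union bound). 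Second, for such an $x$ and any $y\in\Gamma_n\cap B(x,\eps_n)$, the constraint $u_n(y)=g(y)$ and the splitting $u_n(x)-g(x)=(u_n(x)-u_n(y))+(g(y)-g(x))$ reduce the estimate to controlling $(u_n(x)-u_n(y))^2$ on average, together with the Lipschitz bound $|g(y)-g(x)|\leq C\eps_n$. Since $\eta_{\eps_n}\geq c\eps_n^{-d}$ on $|x-y|\leq\eps_n$, the energy bound gives $\sum_{|x-y|\leq\eps_n}(u_n(x)-u_n(y))^2\leq C n^2\eps_n^{d+2}$. Averaging over all $y\in \Gamma_n\cap B(x,\eps_n)$ and summing in $x$ trades the weight $\eps_n^{-d}$ against the abundance $\beta_n n\eps_n^d$ of labeled neighbors, producing the ratio $\eps_n^2/\beta_n$, which vanishes precisely under $\beta_n\gg\eps_n^2$.

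To conclude, along any $\TL^2$-convergent subsequence $u_{n_k}\to u$ one composes with the transport maps $T_{n_k}\colon\Omega\to\Omega_{n_k}$ to obtain $u_{n_k}\circ T_{n_k}\to u$ in $L^2(\mu)$, and since $g$ is continuous with $T_{n_k}\to\mathrm{id}$, also $g\circ T_{n_k}\to g$ in $L^2(\mu)$. The preceding estimate, together with the vanishing measure of the thin boundary layer $\wO\setminus\wO_{\eps_n}$ (on which $u_n,g$ are uniformly bounded), translates to $\|u_{n_k}-g\|_{L^2(\wO;\mu_{n_k})}\to 0$, which forces $u=g$ a.e.~on $\wO$. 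The main obstacle is the discrete Poincar\'e-type estimate: the graph energy controls only averaged gradient-type quantities, so pointwise control of $u_n(x)-g(x)$ is unavailable; the argument must exploit the density of $\Gamma_n$ in $\wO$ (enforced by $n\beta_n\eps_n^d\gg\log n$) by averaging over labeled neighbors, and the balance $\eps_n^2/\beta_n$ is precisely the break-even between the nonlocal $2$-Dirichlet energy and the labeling rate.
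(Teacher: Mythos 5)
Your proof is correct, and the identification of the cluster point with $g$ on $\wO$ takes a genuinely different route than the paper's. The paper cites Theorem~\ref{thm:boundary} (the random-walk boundary estimate) to obtain the pointwise bound $\|u_n-g\|_{\Linfty(\Omega_n\cap\wO_{\alpha_n})}=o(1)$ on a slightly shrunk domain with $\alpha_n=\tfrac{C\eps_n}{\sqrt{\beta_n}}\log(\tfrac{\sqrt{\beta_n}}{\eps_n})$, then converts this to the $\Ltwo$ conclusion via the transport map. You instead propose a self-contained variational argument: average over labeled neighbors to write $u_n(x)-g(x)$ in terms of graph increments plus an $O(\eps_n)$ Lipschitz correction, and then trade the kernel lower bound $\eta_{\eps_n}\gtrsim\eps_n^{-d}$ on $B(x,\eps_n)$ against the label abundance $|\Gamma_n\cap B(x,\eps_n)|\gtrsim n\beta_n\eps_n^d$ to convert the uniform energy bound $\cE^{(2)}_{n,\eps_n}(u_n)\le C$ into $\tfrac{1}{n}\sum_{x\in\Omega_n\cap\wO_{\eps_n}}|u_n(x)-g(x)|^2\lesssim\tfrac{\eps_n^2}{\beta_n}+\eps_n^2$. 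Both proofs rest on the same concentration estimates and transport-map machinery; the paper's choice is natural because Theorem~\ref{thm:boundary} has already been established for Theorem~\ref{thm:wellposed1}(i) and yields the stronger $\Linfty$ control, whereas your discrete Poincar\'e-style estimate stays entirely within the variational/$\Gamma$-convergence framework, gives exactly the $\Ltwo$ control needed here, and makes the threshold $\beta_n\gg\eps_n^2$ appear directly from the energy balance rather than through a stopping-time analysis. One small point to tighten: the claim that every $x\in\Omega_n\cap\wO_{\eps_n}$ has $\gtrsim n\beta_n\eps_n^d$ labeled neighbors holds not merely ``with probability $1-o(1)$'' but, via a Borel--Cantelli argument (the failure probability is $\lesssim n\exp(-cn\beta_n\eps_n^d)$, summable under $n\beta_n\eps_n^d\gg\log n$), for all $n$ sufficiently large almost surely; this is needed to pass from the quantitative estimate to the almost-sure statement of the proposition, and the paper handles it the same way when invoking Theorem~\ref{thm:boundary}.
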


\begin{proof}
It is straightforward to show that $\sup_{n\in\bbN} \|u_n\|_{\Ltwo(\mu_n)}<+\infty$ and $\sup_{n\in \bbN} \cE_n^{(2)}(u_n)<+\infty$ therefore by Proposition~\ref{prop:Prelim:TLp:GamConvEn} we have that $\{u_n\}_{n\in \bbN}$ is pre-compact in $\TL^2$.
Assume that $u_n\to u$ in $\TL^2$.
Choose $\alpha_n = \frac{C\eps_n}{\sqrt{\beta_n}}\log\l\frac{\sqrt{\beta_n}}{\eps_n}\r$.
By Theorem~\ref{thm:boundary} and the Borel-Cantelli lemma,
\[ \|u_n-g\|_{\Linfty(\Omega_n\cap\tilde{\Omega}_{\alpha_n})} = o(1) \]
with probability one.

Let $\tilde{\Omega}_n = \tilde{\Omega}_{\alpha_n+\|T_n-\Id\|_{\Linfty}}$ where $T_n$ is a transport map between $\mu_n$ and $\mu$ with $\|T_n-\Id\|_{\Linfty}\leq \gamma_n\to 0$.
Note that $\tilde{\Omega}_n \subseteq \tilde{\Omega}_{\alpha_n} \subseteq \tilde{\Omega}$ and $T_n(\tilde{\Omega}_n) \subseteq \tilde{\Omega}_{\alpha_n}$.
Hence,
\begin{align*}
\| u-g\|_{\Ltwo(\tilde{\Omega}_n)} & \leq \|u - u_n\circ T_n\|_{\Ltwo(\tilde{\Omega}_n)} + \|u_n\circ T_n - g\circ T_n\|_{\Ltwo(\tilde{\Omega}_n)} + \| g\circ T_n - g \|_{\Ltwo(\tilde{\Omega}_n)} \\
 & \leq \| u-u_n\circ T_n\|_{\Ltwo(\Omega)} + \l \|u_n - g\|_{\Linfty(\tilde{\Omega}_{\alpha_n}\cap \Omega_n)}  + \Lip(g) \gamma_n \r \Vol(\tilde{\Omega}).
\end{align*}
Taking the limit as $n\to \infty$ in the above we have $\| u-g\|_{\Ltwo(\tilde{\Omega})} = 0$ and hence $u=g$ on $\tilde{\Omega}$.
\end{proof}

We now prove the existence of a recovery sequence for the constrained energy $\cEpncon$ to $\cEpinftycon$ defined by
\begin{align*}
\cEpncon(w_n) & = \lb \begin{array}{ll} \cEpn(w_n) & \text{if } w_n(x_i) = g(x_i) \,\forall x_i\in \Gamma_n \\ +\infty & \text{else.} \end{array} \rd \\
\cEpinftycon(w) & = \lb \begin{array}{ll} \cEpinfty(w) & \text{if } w(x) = g(x) \,\forall x\in \wO \\ +\infty & \text{else.} \end{array} \rd
\end{align*}
where $\cEpn$ and $\cEpinfty$ are defined by~\eqref{eq:MainRes:illposed:Lp} and~\eqref{eq:MainRes:illposed:cEpinfty} respectively.

\begin{lemma}
\label{lem:WellPosed:Subset:Hard:RecoverySeq}
Under the assumptions of Theorem~\ref{thm:wellposed2}(ii) we have, with probability one, that for any $v\in \Lp(\mu)$ there exists a sequence $v_n\in \Ltwo(\mu_n)$ such that $v_n\to v$ in $\TLp$ and
\[ \limsup_{n\to \infty} \cEpncon(v_n) \leq \cEpinftycon(v). \] 
\end{lemma}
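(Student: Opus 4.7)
The plan is to handle the two cases $\cEpinftycon(v)=+\infty$ and $\cEpinftycon(v)<+\infty$ separately. In the first case, I take a recovery sequence $\hat v_n$ from the unconstrained $\Gamma$-convergence result (Proposition~\ref{prop:Prelim:TLp:GamConvEn}), truncate to $\Linfty$-norm $\|g\|_{\Linfty}+1$ (which does not increase the $p$-Dirichlet energy since truncation is a $1$-Lipschitz operation), and overwrite the values on $\Gamma_n$ by $g$ to obtain $v_n$. Since $|\Gamma_n|/n \approx \beta_n \to 0$, we have $\frac{1}{n}\sum_{x\in\Gamma_n} |g(x)-\hat v_n(x)|^p\lesssim \beta_n\to 0$, so the $\TLp$-distance between $v_n$ and $\hat v_n$ vanishes, giving $v_n\to v$ in $\TLp$; the energy bound $\limsup_n \cEpn(v_n)\leq \cEpinftycon(v)=+\infty$ is automatic.

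In the remaining case, $v=g$ on $\wO$ and $v\in\Wkp{1}{p}(\Omega)$. The plan is first to approximate $v$ by smooth $v_k$ that still satisfy $v_k\equiv g$ on a neighborhood of $\wO$. Writing $w:=v-g\in\Wkp{1}{p}(\Omega)$, so that $w=0$ a.e.\ on $\wO$, I multiply $w$ by a smooth cutoff $\chi_\delta$ which vanishes in a $\delta$-neighborhood of $\wO$ and equals one outside a $2\delta$-neighborhood, and then mollify at scale $o(\delta)$; this produces $w_k\in \Ck{\infty}(\bar\Omega)$ vanishing near $\bar\wO$ and converging to $w$ in $\Wkp{1}{p}(\Omega)$. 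Setting $v_k:=g+w_k$ then yields $v_k\in \Ck{3}(\bar\Omega)$ with $v_k=g$ on (a neighborhood of) $\wO$ and $\cEpinfty(v_k)\to\cEpinfty(v)$. For each such $v_k$, the sequence $v_{n,k}(x):=v_k(x)$ for $x\in\Omega_n$ is admissible for $\cEpncon$ (since $v_k=g$ on $\wO\supset\Gamma_n$), converges to $v_k$ in $\TLp$ by Lipschitz continuity of $v_k$ together with $\dWinfty(\mu_n,\mu)\to 0$ a.s., and satisfies $\cEpn(v_{n,k})\to\cEpinfty(v_k)$ a.s.\ by a Taylor expansion at each $x$, the change of variables $z=(y-x)/\eps_n$, and concentration of the resulting $U$-statistic around its mean. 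A diagonal extraction, choosing $k_n\to\infty$ slowly so that $\dTLp(v_{n,k_n},v_{k_n})+|\cEpn(v_{n,k_n})-\cEpinfty(v_{k_n})|<1/n$, then delivers the recovery sequence $v_n:=v_{n,k_n}$.

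The main obstacle is the density step: producing smooth approximations of $v$ that exactly match the hard constraint $v_k=g$ on $\wO$. The cutoff $\chi_\delta$ introduces an error $w\nabla\chi_\delta$ of pointwise size $|w|/\delta$ supported in $N_{2\delta}(\wO)\setminus N_\delta(\wO)$, and controlling its $\Lp$-norm as $\delta\to 0$ is exactly where Hardy's inequality enters. The hypothesis $w=0$ a.e.\ on $\wO$ forces zero outer trace of $w$ on $\partial\wO$, which together with the $\Ck{3}$ regularity of $\partial\wO$ from Model~1 and the fact that $\wO\subset\subset\Omega$ gives $\int \dist(x,\partial\wO)^{-p}|w(x)|^p\,\dd x\lesssim \|\nabla w\|_{\Lp(\Omega\setminus\bar\wO)}^p$ in a collar of $\partial\wO$. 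Bounding $\delta^{-p}\leq \dist(x,\partial\wO)^{-p}$ on $N_{2\delta}(\wO)\setminus N_\delta(\wO)$ and applying dominated convergence (the integrand is in $L^1$ and the region of integration shrinks to a null set) yields $\|w\nabla\chi_\delta\|_{\Lp}\to 0$, completing the density argument.
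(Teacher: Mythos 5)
Your proposal is correct in substance and follows essentially the same route as the paper: for $v$ with $\cEpinftycon(v)<+\infty$ one restricts regular constrained approximants to $\Omega_n$ (the constraint is automatic since $v_k=g$ on $\wO\supset\Gamma_n$), uses the unconstrained energy convergence for the restriction, and concludes by a diagonal argument; the paper's proof does exactly this, compressing the approximation step into ``density of Lipschitz functions \dots via a diagonalisation argument.'' What you add is the constrained-density step the paper leaves implicit: the cutoff-plus-Hardy argument showing that $w=v-g$, which vanishes on $\wO$, can be approximated in $\Wkp{1}{p}(\Omega)$ by smooth functions vanishing near $\bar{\wO}$ (valid here because $p>1$ and $\partial\wO$ is $\Ck{3}$, so the one-dimensional Hardy inequality in a collar applies), and this is precisely what guarantees both admissibility of the approximants and $\cEpinfty(v_k)\to\cEpinfty(v)$. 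Two small corrections. First, in the case $\cEpinftycon(v)=+\infty$ your truncation at level $\|g\|_{\Linfty}+1$ is both unnecessary and harmful: nothing beyond $\TLp$-convergence of some sequence to $v$ is needed there (the limsup inequality is vacuous), and truncation makes your sequence converge to the truncation of $v$ rather than to $v$ whenever $\|v\|_{\Linfty}>\|g\|_{\Linfty}+1$; simply take the unconstrained recovery sequence unmodified. Second, for the claim $\cEpn(v_k\lfloor_{\Omega_n})\to\cEpinfty(v_k)$ almost surely you can just invoke the unconstrained result (Proposition~\ref{prop:Prelim:TLp:GamConvEn}, whose recovery sequence for $\Wkp{1}{p}\cap\Lip$ functions is exactly the pointwise restriction, cf.\ Theorem~\ref{thm:app:NLConv}(2)) rather than rerun a $U$-statistic concentration argument; if you do argue directly, note that a naive bounded-differences bound yields an exponent of order $n\eps_n^{2d}$ and so does not cover the full regime $n\eps_n^d\gg\log(n)$ --- one should instead condition on each point, apply Bernstein to the inner sums and union bound, as in the paper's appendix.
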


\begin{proof}
First consider any $v\in \Lip(\Omega)$ with $\cEpinftycon(v)<+\infty$.
Define $v_n\in \Lp(\mu_n)$ by
\[ v_n(x) = v(x) \quad \text{for } x\in \Omega_n. \]
It is straightforward to show that $v_n\to v$ in $\TLp$ and
\[ \limsup_{n\to\infty} \cEpncon(v_n) = \limsup_{n\to\infty} \cEpn(v_n) \leq \cEpinfty(v) = \cEpinftycon(v). \]
By the density of Lipschitz functions in $\Lp$ we can extend the result to any $v\in \Lp$ via a diagonalisation argument.
\end{proof}

We now prove Theorem~\ref{thm:wellposed1}(ii).

\begin{proof}[Proof of Theorem~\ref{thm:wellposed1}(ii)]
We start by showing that the liminf inequality holds along the minimising sequence.
We know by Proposition~\ref{prop:WellPosed:Subset:Hard:Compact} that the set $\{u_n\}_{n\in \bbN}$ of minimizers of $\cEncon^{(2)}$ is, with probability one, pre-compact and any limit $u$ satisfies $u=g$ on $\tilde{\Omega}$.
Assume $u_{n_k}\to u$ in $\TL^2$.
Then,
\[ \liminf_{k\to\infty} \cE^{(2)}_{n_k,\mathrm{con}}(u_{n_k}) = \liminf_{k\to\infty} \cE_{n_k}^{(2)}(u_{n_k}) \geq \cE_\infty^{(2)}(u) = \cEinftycon^{(2)}(u) \]
by Proposition~\ref{prop:Prelim:TLp:GamConvEn}.
Now let $v\in \Ltwo(\Omega)$ be any other function in $\Ltwo(\Omega)$ and $v_n$ it's recovery sequence (which exists by Lemma~\ref{lem:WellPosed:Subset:Hard:RecoverySeq}).
We have,
\begin{align*}
\cEinftycon^{(2)}(v) & \geq \limsup_{n\to\infty} \cEncon^{(2)}(v_n) \\
 & \geq \limsup_{k\to\infty} \cE^{(2)}_{n_k,\mathrm{con}}(v_{n_k}) \\
 & \geq \limsup_{k\to\infty} \cE^{(2)}_{n_k,\mathrm{con}}(u_{n_k}) \\
 & \geq \liminf_{k\to\infty} \cE^{(2)}_{n_k,\mathrm{con}}(u_{n_k}) \\
 & \geq \cEinftycon^{(2)}(u).
\end{align*}
By the above argument $\cEinftycon^{(2)}(v)\geq \cEinftycon^{(2)}(u)$ for all $v\in \Lp(\Omega)$, hence $u$ is a minimizer of $\cEinftycon^{(2)}$.
Since the minimizer of $\cEinftycon^{(2)}$ is the unique function satisfying~\eqref{eq:model1pde} it follows that the whole sequence converges.

To obtain $\Ltwo$ convergence we note
\[ \|u_n - u\lfloor_{\Omega_n}\|_{\Ltwo(\mu_n)} = \|u_n\circ T_n - u\lfloor_{\Omega_n}\circ T_n \|_{\Ltwo(\mu)} \leq \| u_n\circ T_n - u\|_{\Ltwo(\mu)} + \| u -u\lfloor_{\Omega_n}\circ T_n\|_{\Ltwo(\mu)}. \]
Since $u$ is Lipschitz and $T_n$ can be chosen to converge to the identity uniformly then we are done.
\end{proof}

\section{The Ill-Posed Case} \label{sec:IllPosed}

The aim of this section is to prove Theorems~\ref{thm:MainRes:illposed:illposed1} and~\ref{thm:MainRes:illposed:illposed2}.
Approximately the theorems state that if the number of labeled data points is less than $n\eps_n^p$, and $\eps_n$ is sufficiently large, then solutions $u_n$ of~\eqref{eq:BVP} are converging to constants.
We note that if $p\leq d$ then "sufficiently large" is satisfied by $\eps_n$ being greater than connectivity of the graph.

In this section we use variational methods, rather than the relationship between minimizers and random walks which is valid only for $p=2$.
In particular, we are able to treat any $p>1$ and we are able to prove the results for each model in a unified framework.
We therefore assume that for some subset $\Omega_n^\prime$ we have that if $x_i\in\Omega_n^\prime$ then $x_i\in\Gamma_n$ with probability $\beta$.
In {\bf Model 1} $\Omega_n^\prime = \wO$ and in {\bf Model 2} $\Omega_n^\prime = \partial_\delta\Omega$. 

We start by proving the $\Gamma$-convergence of the constrained functionals, this is the corresponding result to Proposition~\ref{prop:Prelim:TLp:GamConvEn} which concerns the unconstrained case.
The proof is analogous to the case when the constraint set $Z_n$ is fixed (in particular when $Z_n=\{1,2,\dots,N\}$ for $N$ fixed), see~\cite{slepcev19}.

\begin{proposition}
\label{prop:IllPosed:Hard:GamConv}
Let $p>1$, $\Omega_n^\prime\subset\Omega$ is open and each $x_i\in\Omega_n$ is in $\Gamma_n$ with probability $\beta\one_{\Omega_n^\prime}(x_i)$.
Assume {\bf (A1-3)} hold and $\delta_n=\int_{\Omega_n^\prime}\rho(x)\,\dd x$.
Further, we assume that $\beta_n\to 0^+$, $\eps_n\to 0^+$ and $\delta_n$ satisfy
\[ \frac{\beta_n\delta_n}{\eps_n^p} \ll 1, \quad \frac{n\eps_n^p}{\log n} \gg 1\quad \text{and} \quad n\eps_n^d\gg \log(n). \]
Then, with probability one,
\[ \Glim_{n\to\infty} \cEpconnepsn = \cEpinfty \]
on the set $\{(\nu,f)\,:\, \nu\in \cP(\Omega), \|f\|_{\Linfty(\nu)}\leq \|g\|_{\Linfty(\Omega)}\}$.
Furthermore, if $\{u_n\}_{n=1}^\infty$ is a sequence satisfying $\sup_{n\in \bbN} \|u_n\|_{\Lp(\mu_n)}<\infty$ and $\sup_{n\in\bbN}\cEpconnepsn(u_n)<\infty$ then $\{u_n\}_{n=1}^\infty$ is pre-compact in $\TLp$.
\end{proposition}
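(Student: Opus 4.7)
The plan is to reduce everything to the unconstrained $\Gamma$-convergence result of Proposition~\ref{prop:Prelim:TLp:GamConvEn}. Since $\cEpconnepsn(w) \geq \cEpnepsn(w)$ pointwise, both the $\TLp$-precompactness of energy-bounded sequences and the liminf inequality for the constrained functional transfer for free: if $\sup_n \cEpconnepsn(u_n)<\infty$ with $\sup_n\|u_n\|_{\Lp(\mu_n)}<\infty$, then $\sup_n \cEpnepsn(u_n)<\infty$ and precompactness follows, and for any $u_n\toTLp u$ one has $\liminf_n \cEpconnepsn(u_n)\geq \liminf_n \cEpnepsn(u_n)\geq \cEpinfty(u)$. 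The real content is thus the construction of a recovery sequence.

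Given $u\in \Lp(\mu)$ with $\|u\|_{\Linfty(\mu)}\leq \|g\|_{\Linfty(\Omega)}$, I would first invoke Proposition~\ref{prop:Prelim:TLp:GamConvEn} to obtain $\tilde u_n\in \Lp(\mu_n)$ with $\tilde u_n\toTLp u$ and $\cEpnepsn(\tilde u_n)\to \cEpinfty(u)$. Truncating at $\pm\|g\|_{\Linfty(\Omega)}$ (which can only decrease the $p$-Dirichlet energy, since $t\mapsto \min(\max(t,-M),M)$ is $1$-Lipschitz, and preserves $\TLp$ convergence) we may assume $\|\tilde u_n\|_{\Linfty(\mu_n)}\leq \|g\|_{\Linfty(\Omega)}$. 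Define
\[
u_n(x_i)=\begin{cases} g(x_i), & x_i\in \Gamma_n,\\ \tilde u_n(x_i), & x_i\in \Omega_n\setminus \Gamma_n,\end{cases}
\]
which is admissible for $\cEpconnepsn$. A Bernstein inequality applied to $|\Gamma_n|=\sum_i \one_{x_i\in \Omega_n^\prime}\one_{z_i=1}$ gives $|\Gamma_n|\leq 2\max(n\beta_n\delta_n,\, C\log n)$ almost surely eventually, so $|\Gamma_n|/n\to 0$ with probability one; combined with the uniform $\Linfty$ bound on $u_n$ and $\tilde u_n$, this yields $u_n\toTLp u$.

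The remaining task is to show $\cEpnepsn(u_n)-\cEpnepsn(\tilde u_n)\to 0$. Only edges with at least one endpoint in $\Gamma_n$ contribute to this difference, so using $|a|^p\leq 2^{p-1}(|b|^p+|a-b|^p)$ together with $\|u_n\|_{\Linfty},\|\tilde u_n\|_{\Linfty}\leq \|g\|_{\Linfty(\Omega)}$ I can bound the difference by
\[
\frac{C\|g\|_{\Linfty}^p}{n^2\eps_n^p}\sum_{x\in \Gamma_n}\sum_{y\in \Omega_n}\eta_{\eps_n}(|x-y|).
\]
Proposition~\ref{prop:concentration1}, valid on a high-probability event thanks to $n\eps_n^d\gg \log n$, gives the degree bound $\sum_y \eta_{\eps_n}(|x-y|)\leq Cn$ uniformly in $x$, so the whole expression is $O(|\Gamma_n|/(n\eps_n^p))$. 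Using the cardinality bound above, this is $O\bigl(\max(\beta_n\delta_n,\log n/n)/\eps_n^p\bigr)$, which tends to zero under the hypotheses $\beta_n\delta_n\ll \eps_n^p$ and $n\eps_n^p\gg \log n$. The main obstacle is ensuring that the degree bound, the cardinality bound on $\Gamma_n$, and the existence of the unconstrained recovery sequence all hold simultaneously on a single probability-one event; this is routine via Borel--Cantelli once the tail bounds are in hand.
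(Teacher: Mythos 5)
Your proposal is correct and follows essentially the same route as the paper: compactness and the liminf inequality are inherited from the unconstrained functional via $\cEpconnepsn \geq \cEpnepsn$, and the recovery sequence is built by overwriting an unconstrained recovery sequence with $g$ on $\Gamma_n$, with $|\Gamma_n|$ controlled by Bernstein plus Borel--Cantelli and the energy perturbation controlled by a uniform degree bound. Your explicit truncation of the unconstrained recovery sequence at $\pm\|g\|_{\Linfty(\Omega)}$ is a small but welcome tightening of a step the paper leaves implicit.
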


\begin{proof}
We divide the proof into the liminf and limsup inequalities, and the compactness property.

\emph{Compactness:} If $\sup_{n\in\bbN} \cEpconnepsn(u_n)<\infty$ then we have $\cEpconnepsn(u_n) = \cEpnepsn(u_n)$ hence compactness follows from Proposition~\ref{prop:Prelim:TLp:GamConvEn}.

\emph{Liminf:} Let $u_n\to u$ in $\TLp$ and assume $\liminf_{n\to \infty} \cEpconnepsn(u_n) < \infty$ else the liminf inequality is trivial.
Hence, we may assume that $u_n$ satisfies the constraints.
By Proposition~\ref{prop:Prelim:TLp:GamConvEn},
\[ \liminf_{n\to\infty} \cEpconnepsn(u_n) = \liminf_{n\to\infty} \cEpnepsn(u_n) \geq \cEpinfty(u) \]
as required.

\emph{Limsup:} Pick $u\in \Lp(\Omega)$ and suppose $\cEpinfty(u)<\infty$ else the limsup inequality is trivial.
Let $u_n$ be a recovery sequence with respect to the $\Gamma$-convergence of $\cEpn$ in Proposition~\ref{prop:Prelim:TLp:GamConvEn}, i.e. (with probability one)
\[ u_n \to u \text{ in } \TLp \qquad \text{and} \qquad \limsup_{n\to\infty} \cEpn(u_n) \leq\cEpinfty(u). \]
Define
\[ \hat{u}_n(x_i) = \lb \begin{array}{ll} g(x_i) & \text{if } i\in Z_n \\ u_n(x_i) & \text{else.} \end{array} \rd \]
By construction $\hat{u}_n$ satisfies the constraints.

We claim that (on a set of probability one)
\begin{align}
& \hat{u}_n \to u \qquad \text{in } \TLp \label{eq:IllPosed:Hard:GamConvClaim1} \\
& \lim_{n\to\infty} \l \cEpconnepsn(\hat{u}_n) - \cEpn(u_n) \r = 0. \label{eq:IllPosed:Hard:GamConvClaim2}
\end{align}

We first show that there exists $N$ and $a_n\to0^+$ such that if $n\geq N$ then $|Z_n|\leq a_n n\eps_n^p$ where $N<+\infty$ with probability one.
Let $z_i$ be the iid random variables satisfying $z_i=1$ if $i\in Z_n$ and $z_i=0$ if $i\not\in Z_n$.
Note $\bbE|z_i|^k = \beta_n\delta_n$ for all $k>0$.
By Bernstein's inequality, for any $t>0$,
\[ \bbP\ls |Z_n| \geq t + n\beta_n\delta_n \rs = \bbP\ls \sum_{i=1}^n (z_i - \beta_n\delta_n) \geq t \rs \leq e^{-\frac{\frac12 t^2}{n\beta_n\delta_n+\frac13 t}}. \]
Let us find $a_n$ such that
\[ 1\gg a_n\gg \max\lb \frac{\beta_n\delta_n}{\eps_n^p}, \frac{\log n}{n\eps_n^p} \rb. \]
In particular, for $n$ sufficiently large we can assume $a_n>\frac{2\beta_n\delta_n}{\eps_n^p}$ then, choosing $t = \frac12 n a_n\eps_n^p$, we have
\[ \bbP\ls \frac{|Z_n|}{n\eps_n^p} \geq a_n \rs \leq e^{-\frac{3}{16} n a_n\eps_n^p}. \]
Moreover, for $n$ sufficiently large $\frac{3}{16} n a_n\eps_n^p\geq 2 \log(n)$, hence
\[ \bbP\ls \frac{|Z_n|}{n\eps_n^p} \geq a_n \rs \leq n^{-2} \]
which is summable, i.e. $\sum_{n=N}^\infty \bbP\ls \frac{|Z_n|}{n\eps_n^p} \geq a_n \rs \leq \sum_{n=N}^\infty n^{-2}<+\infty$ so by the Borel Cantelli lemma the event $\frac{|Z_n|}{n\eps_n^p}\geq a_n$ occurs a finite number of times with probability one.
In particular, there exists $N<+\infty$ such that $\frac{|Z_n|}{n\eps_n^p}\leq a_n$ for all $n\geq N$.

To show~\eqref{eq:IllPosed:Hard:GamConvClaim1} it is enough to show that $\|\hat{u}_n-u_n\|_{\Lp(\mu_n)}\to 0$.
Now,
\[ \|\hat{u}_n-u_n\|_{\Lp(\mu_n)}^p = \frac{1}{n} \sum_{i=1}^n |\hat{u}_n(x_i) - u_n(x_i)|^p \leq \frac{(2\|g\|_{\Linfty(\Omega)})^p|Z_n|}{n}. \] Since, with probability one, $|Z_n|\leq a_n n\eps_n^p$ for $n$ sufficiently large then we have $\|\hat{u}_n-u_n\|_{\Lp(\mu_n)}\to 0$.

For~\eqref{eq:IllPosed:Hard:GamConvClaim2} we compute
\begin{align*}
\la \cEpconnepsn(\hat{u}_n) - \cEpnepsn(u_n) \ra & \leq \frac{2}{n^2\eps_n^p} \sum_{i\in Z_n} \sum_{j\not\in Z_n} W_{ij} \la \la g(x_i) - u_n(x_j)\ra^p - \la u_n(x_i) - u_n(x_j)\ra^p \ra \\
 & \hspace{1cm} + \frac{1}{n^2\eps_n^p} \sum_{i\in Z_n} \sum_{j\in Z_n} W_{ij} \la \la g(x_i) - g(x_j)\ra^p - \la u_n(x_i) - u_n(x_j)\ra^p \ra \\
 & \leq \frac{4(2\|g\|_{\Linfty(\Omega)})^p}{n^2\eps_n^p} \sum_{i\in Z_n} \sum_{j=1}^n W_{ij}.
\end{align*}
By the argument in \cite[Lemma 4.9]{slepcev19} we can find a $C$ such that
\[ \frac{1}{n} \sum_{j=1}^n W_{ij} \leq C \]
for all $n$ sufficiently large (with probability one).
Hence, (after redefining $C$)
\[ \la \cEpconnepsn(\hat{u}_n) - \cEpn(u_n) \ra \leq \frac{C|Z_n|}{n\eps_n^p}. \]
Since $\frac{|Z_n|}{n\eps_n^p} \leq a_n\to 0$ this proves~\eqref{eq:IllPosed:Hard:GamConvClaim2} with probability one.
\end{proof}

By the above lemma we can now prove Theorems~\ref{thm:MainRes:illposed:illposed1} and~\ref{thm:MainRes:illposed:illposed2}.

\begin{proof}[Proof of Theorems~\ref{thm:MainRes:illposed:illposed1} and~\ref{thm:MainRes:illposed:illposed2}]
We apply Proposition~\ref{prop:IllPosed:Hard:GamConv}; for {\bf Model 1} we have $\delta_n \sim 1$, for {\bf Model 2} we have $\delta_n\to 0^+$.
In all cases the assumptions in Theorems~\ref{thm:MainRes:illposed:illposed1} and~\ref{thm:MainRes:illposed:illposed2} imply that $\frac{\beta_n\gamma_n}{\eps_n^p}\to 0^+$.
Hence, the following statements all hold with probability one.
As in the proof of \cite[Theorem~2.1]{slepcev19} we can argue that minimizers $u_n$ of $\cEpncon(\cdot;Z_n)$ satisfy $\|u_n\|_{\Linfty(\mu_n)}\leq \|g\|_{\Linfty}$ for all $n$ sufficiently large, so by Proposition~\ref{prop:IllPosed:Hard:GamConv} we can infer compactness of $u_n$.
Since $\Gamma$-convergence (also given by Proposition~\ref{prop:IllPosed:Hard:GamConv}) plus compactness implies the convergence of minimizers (along subsequences), see Theorem~\ref{thm:Prelim:TLp:Conmin}, then we have that any converging subsequence of $u_n$ is converging to a minimizer of $\cEpinfty$.
Since the minimizers of $\cEpinfty$ are the constant functions then we are done.
\end{proof}

\section{Numerical Experiments} \label{sec:NumExp}

We now present the results of numerical experiments to support the convergence rates established in Theorems~\ref{thm:wellposed1} and \ref{thm:wellposed2}.

\subsection{Synthetic data}
\label{sec:synthetic}

\begin{figure}
\centering
\subfloat[{\bf Model 1}]{\includegraphics[width=0.5\textwidth,clip=true]{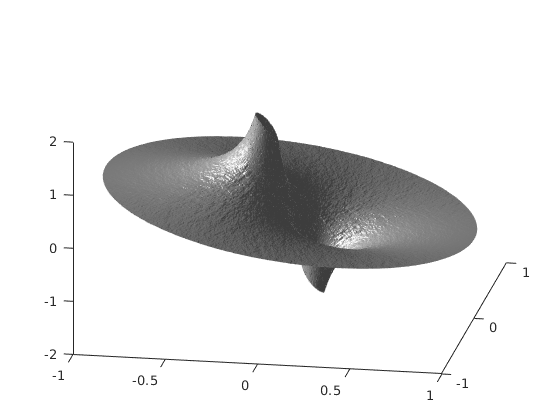}}
\subfloat[{\bf Model 2}]{\includegraphics[width=0.5\textwidth,clip=true]{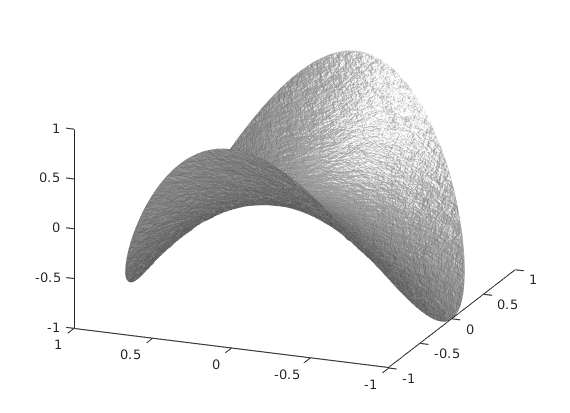}}
\caption{Example of the solution $u_n$ for $d=2$ for each model.}
\label{fig:models}
\end{figure}

We first consider synthetic experiments with {\bf Model 1} and {\bf Model 2}. We take our domain to be the unit ball $\Omega = B(0,1)$ in $\R^d$, for $d=2,3$. For {\bf Model 1} we take the label domain  $\wO$ to be the ball $\wO=B(0,\tfrac12)$ and examine how the rate changes as $\beta$ is varied.
For {\bf Model 2} the label domain is the set $\partial_\delta\Omega = B(0,1)\setminus B(0,1-\delta)$, here we fix $\beta=1$ and examine how the rate changes as $\delta$ is varied.
 
For each model, we choose the label function $g:\Omega\to \R$ to be an explicit solution of the corresponding boundary value problem (\eqref{eq:model1pde} or \eqref{eq:model2pde}). For {\bf Model 1} with $d=2$ we choose
\[g(x) = \log |x - z^*||z| - \log|x+z^*||z| + \log|x-z| - \log|x+z|,\]
where $z^* = z/|z|^2$, which is the solution of the two-point source problem
\[\Delta g = 2\pi(\delta_z - \delta_{-z})\]
with Neumann condition on $\partial \Omega$. We choose $z = (1/8,0)$ to ensure the singularities are strictly inside the label domain, and we truncate the function $g$ near the singularity to ensure it is Lipschitz. The corresponding function for $d=3$ is given by
\begin{align*}
g(x) &= \frac{1}{|z||x-z^*|} - \frac{1}{|z||x+z^*|} + \frac{1}{|x-z|}-\frac{1}{|x+z|}\\
&\hspace{1in}+ \log\left( \frac{z}{|z|}(z^*-x) + |z^*-x| \right) - \log\left( \frac{z}{|z|}(z^*+x) + |z^*+x| \right).
\end{align*}
and it solves the two-point problem $-\Delta g = 4\pi (\delta_z - \delta_{-z})$. For {\bf Model 2} we use the harmonic label function
\[g(x) = \sum_{i=1}^d (-1)^{i-1}(x \cdot \e_i)^2 - \frac{1}{2d}( (-1)^{d-1} +1)|x|^2\]
where $\e_i\in\bbR^d$ are the standard basis vectors, i.e., when $d=2$, $\e_1=(1,0)$ and $\e_2=(0,1)$.

The graph is constructed from $n$ independent and uniformly distributed random variables $x_1,x_2,\dots,x_n$, and the kernel $\eta_\eps$ for defining the weights is a Gaussian kernel with standard deviation $\sigma = \eps/2$. We choose the length scale
\begin{equation}\label{eq:epsscale}
\eps = \left( \frac{\log(n)}{n} \right)^{\frac{1}{d+2}},
\end{equation}
which is the lower limit of the pointwise consistency results for graph Laplacians, and the lower limit allowable for the convergence rates in Theorems \ref{thm:wellposed1} and \ref{thm:wellposed2}. We remark that neither theorem establishes a convergence rate for this small value of $\eps$.  

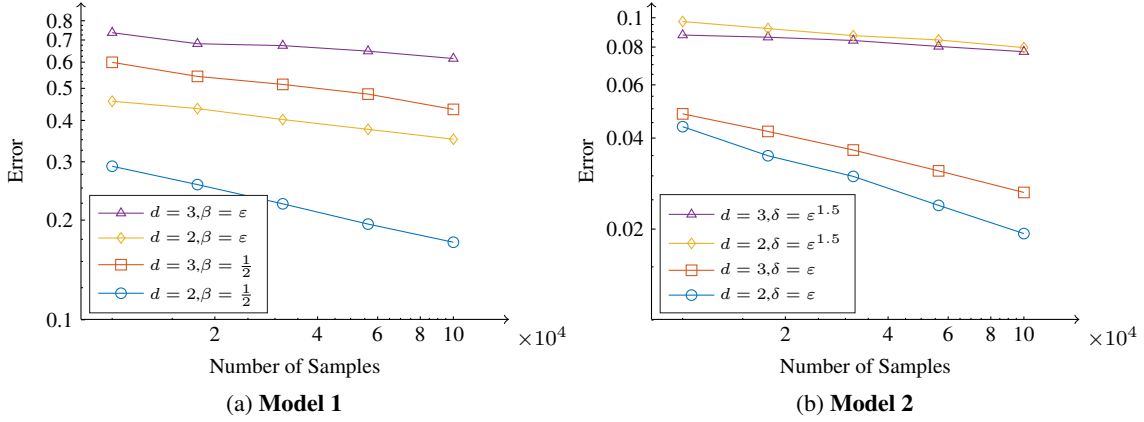
\begin{figure}
\setlength\figureheight{0.3\textwidth}
\setlength\figurewidth{0.45\textwidth}
\centering
\scriptsize
\subfloat[{\bf Model 1}]{% This file was created by matlab2tikz.
%
%The latest updates can be retrieved from
%  http://www.mathworks.com/matlabcentral/fileexchange/22022-matlab2tikz-matlab2tikz
%where you can also make suggestions and rate matlab2tikz.
%
\definecolor{mycolor1}{rgb}{0.49400,0.18400,0.55600}%
\definecolor{mycolor2}{rgb}{0.92900,0.69400,0.12500}%
\definecolor{mycolor3}{rgb}{0.85000,0.32500,0.09800}%
\definecolor{mycolor4}{rgb}{0.00000,0.44700,0.74100}%
\begin{tikzpicture}
\def\xl{9}
\def\xu{11.75}
\def\yl{-2.3}
\def\yu{-0.2}
\def\bigxticks{(\yu-\yl)/100}
\def\smallxticks{(\yu-\yl)/200}
\def\bigyticks{(\xu-\xl)/100}
\def\smallyticks{(\xu-\xl)/200}

\begin{axis}[%
width=\figurewidth,
height=\figureheight,
scale only axis,
xmin={\xl-(\xu-\xl)/5},
xmax={\xu+(\xu-\xl)/5},
ymin={\yl-(\yu-\yl)/5},
ymax={\yu+(\yu-\yl)*0.05},
hide axis,
axis background/.style={fill=white!100},
legend style={at={(axis cs: {\xl+2*\bigyticks},{\yl+2*\bigxticks})}, anchor=south west, legend cell align=left, align=left, draw=white!15!black, font=\tiny, text = black}
]

\draw [black,->] (axis cs: \xl,\yl) -- (axis cs: {\xu+(\xu-\xl)*0.05},\yl) node[below right] {$\times 10^4$};
\draw [black,->] (axis cs: \xl,\yl) -- (axis cs: \xl,{\yu+(\yu-\yl)*0.05});
\node[black] at (axis cs: {(\xu+(\xu-\xl)*0.05+\xl)/2},{\yl-0.16*(\yu-\yl)}) {Number of Samples};
\node[black,rotate=90] at (axis cs: {\xl-0.16*(\xu-\xl)},{(\yu+(\yu-\yl)*0.05+\yl)/2}) {Error};
%\foreach \yValue in {-2.2,-2,-1.8,-1.6,-1.4,-1.2,-1.0,-0.8,-0.6,-0.4,-0.2} {
%    \edef\temp{\noexpand\draw [-] ({\xl+(\xu-\xl)/100},\yValue) -- (\xl,\yValue) node[black,left] {\yValue};} 
%    \temp
%}
\foreach \yValue in {0.1,0.2,0.3,0.4,0.5,0.6,0.7,0.8} {
    \edef\temp{\noexpand\draw [black,-] ({\xl+\bigyticks},{ln(\yValue)}) -- (\xl,{ln(\yValue)}) node[black,left] {\yValue};} 
    \temp
}
\foreach \yValue in {0.125,0.15,0.175,0.225,0.25,0.275,0.325,0.35,0.375,0.425,0.45,0.475,0.525,0.55,0.575,0.625,0.65,0.675,0.725,0.75,0.775} {
    \edef\temp{\noexpand\draw [black,-] ({\xl+\smallyticks},{ln(\yValue)}) -- (\xl,{ln(\yValue)}) node[black,left] {};} 
    \temp
}
%\foreach \xValue in {9,9.5,10,10.5,11,11.5} {
%	\edef\temp{\noexpand\draw [-] (\xValue,{\yl+(\yu-\yl)/100}) -- (\xValue,\yl) node[black,below] {\xValue};}    
%    \temp
%}
\foreach \xValue in {2,4,6,8,10} {
	\edef\temp{\noexpand\draw [black,-] ({ln(\xValue*10^4)},{\yl+\bigxticks}) -- ({ln(\xValue*10^4)},\yl) node[black,below] {\xValue};}    
    \temp
}
\foreach \xValue in {1,1.5,2.5,3,3.5,4.5,5,5.5,6.5,7,7.5,8.5,9,9.5} {
	\edef\temp{\noexpand\draw [black,-] ({ln(\xValue*10^4)},{\yl+\smallxticks}) -- ({ln(\xValue*10^4)},\yl) node[black,below] {};}    
    \temp
}

\addplot [color=mycolor1, mark=triangle, mark options={solid, mycolor1}]
  table[row sep=crcr]{%
9.21034037197618	-0.306095126315034\\
9.78599822374366	-0.382971826890459\\
10.3616399829233	-0.395812287541014\\
10.9372768351602	-0.434365166655835\\
11.5129254649702	-0.48574776423695\\
};
\addlegendentry{$d=3$,$\beta=\eps$}

\addplot [color=mycolor2, mark=diamond, mark options={solid, mycolor2}]
  table[row sep=crcr]{%
9.21034037197618	-0.782728416069306\\
9.78599822374366	-0.833889488387306\\
10.3616399829233	-0.910086041710568\\
10.9372768351602	-0.978212435684259\\
11.5129254649702	-1.04681546600413\\
};
\addlegendentry{$d=2$,$\beta=\eps$}

\addplot [color=mycolor3, mark=square, mark options={solid, mycolor3}]
  table[row sep=crcr]{%
9.21034037197618	-0.512116845444259\\
9.78599822374366	-0.610102245680416\\
10.3616399829233	-0.665745979610753\\
10.9372768351602	-0.733090237089661\\
11.5129254649702	-0.839476593115873\\
};
\addlegendentry{$d=3$,$\beta=\frac12$}

\addplot [color=mycolor4, mark=o, mark options={solid, mycolor4}]
  table[row sep=crcr]{%
9.21034037197618	-1.23495323464603\\
9.78599822374366	-1.36243657115546\\
10.3616399829233	-1.49618262040931\\
10.9372768351602	-1.63720089576289\\
11.5129254649702	-1.7633349709977\\
};
\addlegendentry{$d=2$,$\beta=\frac12$}

\end{axis}
\end{tikzpicture}%}
\subfloat[{\bf Model 2}]{% This file was created by matlab2tikz.
%
%The latest updates can be retrieved from
%  http://www.mathworks.com/matlabcentral/fileexchange/22022-matlab2tikz-matlab2tikz
%where you can also make suggestions and rate matlab2tikz.
%
\definecolor{mycolor1}{rgb}{0.49400,0.18400,0.55600}%
\definecolor{mycolor2}{rgb}{0.92900,0.69400,0.12500}%
\definecolor{mycolor3}{rgb}{0.85000,0.32500,0.09800}%
\definecolor{mycolor4}{rgb}{0.00000,0.44700,0.74100}%
\begin{tikzpicture}
\def\xl{9}
\def\xu{11.75}
\def\yl{-4.6}
\def\yu{-2.3}
\def\bigxticks{(\yu-\yl)/100}
\def\smallxticks{(\yu-\yl)/200}
\def\bigyticks{(\xu-\xl)/100}
\def\smallyticks{(\xu-\xl)/200}

\begin{axis}[%
width=\figurewidth,
height=\figureheight,
scale only axis,
xmin={\xl-(\xu-\xl)/5},
xmax={\xu+(\xu-\xl)/5},
ymin={\yl-(\yu-\yl)/5},
ymax={\yu+(\yu-\yl)*0.05},
hide axis,
axis background/.style={fill=white!100},
legend style={at={(axis cs: {\xl+2*\bigyticks},{\yl+2*\bigxticks})}, anchor=south west, legend cell align=left, align=left, draw=white!15!black, font=\tiny, text = black}
]

\draw [black,->] (axis cs: \xl,\yl) -- (axis cs: {\xu+(\xu-\xl)*0.05},\yl) node[below right] {$\times 10^4$};
\draw [black,->] (axis cs: \xl,\yl) -- (axis cs: \xl,{\yu+(\yu-\yl)*0.05});
\node[black] at (axis cs: {(\xu+(\xu-\xl)*0.05+\xl)/2},{\yl-0.16*(\yu-\yl)}) {Number of Samples};
\node[black,rotate=90] at (axis cs: {\xl-0.16*(\xu-\xl)},{(\yu+(\yu-\yl)*0.05+\yl)/2}) {Error};
\foreach \yValue in {0.02,0.04,0.06,0.08,0.1} {
    \edef\temp{\noexpand\draw [black,-] ({\xl+\bigyticks},{ln(\yValue)}) -- (\xl,{ln(\yValue)}) node[black,left] {\yValue};} 
    \temp
}
\foreach \yValue in {0.01,0.015,0.025,0.03,0.035,0.045,0.05,0.055,0.065,0.07,0.075,0.085,0.09,0.095} {
    \edef\temp{\noexpand\draw [black,-] ({\xl+\smallyticks},{ln(\yValue)}) -- (\xl,{ln(\yValue)}) node[black,left] {};} 
    \temp
}
\foreach \xValue in {2,4,6,8,10} {
	\edef\temp{\noexpand\draw [black,-] ({ln(\xValue*10^4)},{\yl+\bigxticks}) -- ({ln(\xValue*10^4)},\yl) node[black,below] {\xValue};}    
    \temp
}
\foreach \xValue in {1,1.5,2.5,3,3.5,4.5,5,5.5,6.5,7,7.5,8.5,9,9.5} {
	\edef\temp{\noexpand\draw [black,-] ({ln(\xValue*10^4)},{\yl+\smallxticks}) -- ({ln(\xValue*10^4)},\yl) node[black,below] {};}    
    \temp
}

\addplot [color=mycolor1, mark=triangle, mark options={solid, mycolor1}]
  table[row sep=crcr]{%
9.21034037197618	-2.4342863233776\\
9.78599822374366	-2.45062359136304\\
10.3616399829233	-2.47556016955313\\
10.9372768351602	-2.5208494975108\\
11.5129254649702	-2.56079142511633\\
};
\addlegendentry{$d=3$,$\delta=\eps^{1.5}$}

\addplot [color=mycolor2, mark=diamond, mark options={solid, mycolor2}]
  table[row sep=crcr]{%
9.21034037197618	-2.33132126878168\\
9.78599822374366	-2.3847350959704\\
10.3616399829233	-2.43839328508241\\
10.9372768351602	-2.47127925655626\\
11.5129254649702	-2.52990285391911\\
};
\addlegendentry{$d=2$,$\delta=\eps^{1.5}$}

\addplot [color=mycolor3, mark=square, mark options={solid, mycolor3}]
  table[row sep=crcr]{%
9.21034037197618	-3.03457059147863\\
9.78599822374366	-3.16829726837463\\
10.3616399829233	-3.31001362856298\\
10.9372768351602	-3.4686155310425\\
11.5129254649702	-3.63300845098558\\
};
\addlegendentry{$d=3$,$\delta=\eps$}

\addplot [color=mycolor4, mark=o, mark options={solid, mycolor4}]
  table[row sep=crcr]{%
9.21034037197618	-3.13208741296459\\
9.78599822374366	-3.35340798407757\\
10.3616399829233	-3.51084004902097\\
10.9372768351602	-3.73149250167759\\
11.5129254649702	-3.94624723915059\\
};
\addlegendentry{$d=2$,$\delta=\eps$}

\end{axis}
\end{tikzpicture}%}
\caption{Plots of errors versus number of samples $n$ for each model.}
\label{fig:model_errs}
\end{figure}

\begin{table}[!t]
\centering
\begin{tabular}{|c|c|c|c|c|}
 \hline
 &\multicolumn{2}{c|}{\textbf{Model 1}}&\multicolumn{2}{c|}{{\bf Model 2}} \\
\hline
Dimension& $\beta=1/2$ & $\beta=\eps$ & $\beta=1,\delta =\eps$ & $\beta=1,\delta=\eps^{1.5}$ \\
\hline
$d=2$     &1.02 &0.52   &1.54   &0.37         \\
$d=3$     &0.75 &0.39   &1.44   &0.31       \\
\hline
\end{tabular}
\caption{Convergence rates $\alpha$ from fitting the error to a power law $\eps^{\alpha}$.}
\label{tab:rates}
\end{table}

Plots of the errors are shown in Figure \ref{fig:model_errs}.  For both models, we report the error
\[\max_{x\in \Omega_n}|u_n(x) - g(x)|,\]
averaged over 100 trials. For {\bf Model 1} we consider both $\beta=1/2$ and $\beta=\eps$, and for {\bf Model 2} we consider $\beta=1$, and both $\delta=\eps$ and $\delta=\eps^{1.5}$. We also ran experiments with $\beta=\eps^2$ and $\delta=\eps^2$ in each model, and observed non-convergence as expected from our ill-posedness results (Theorems \ref{thm:MainRes:illposed:illposed1} and \ref{thm:MainRes:illposed:illposed2}). Table \ref{tab:rates} shows the convergence rates $\alpha$ from fitting the error to a power law $\eps^\alpha$.
For the larger length scale $\eps = \left( \frac{\log(n)}{n} \right)^{\frac{1}{d+4}}$, we would expect to get rates of $\alpha=1$ and $\alpha=1/2$ for $\beta=1/2,\delta=\eps$, and $\beta=\eps,\delta=\eps^{1.5}$, respectively, due to the rates of convergence in Theorems \ref{thm:wellposed1} and \ref{thm:wellposed2}. Even though we have selected a much smaller length scale \eqref{eq:epsscale}, for which our theorems do not guarantee rates, we see that many of the rates still hold experimentally and agree with our the expected rates, or are even better. We expect there is some amount of stochastic homogenization taking place, which allows the rates to be pushed beyond the pointwise consistency regime of graph Laplacians. A careful analysis of this phenomenon is beyond the scope of our work.

\subsection{Comparison on MNIST dataset}
\label{sec:MNIST}

\begin{figure}
\centering
\includegraphics[width=0.8\textwidth,clip=true,trim = 150 250 120 200]{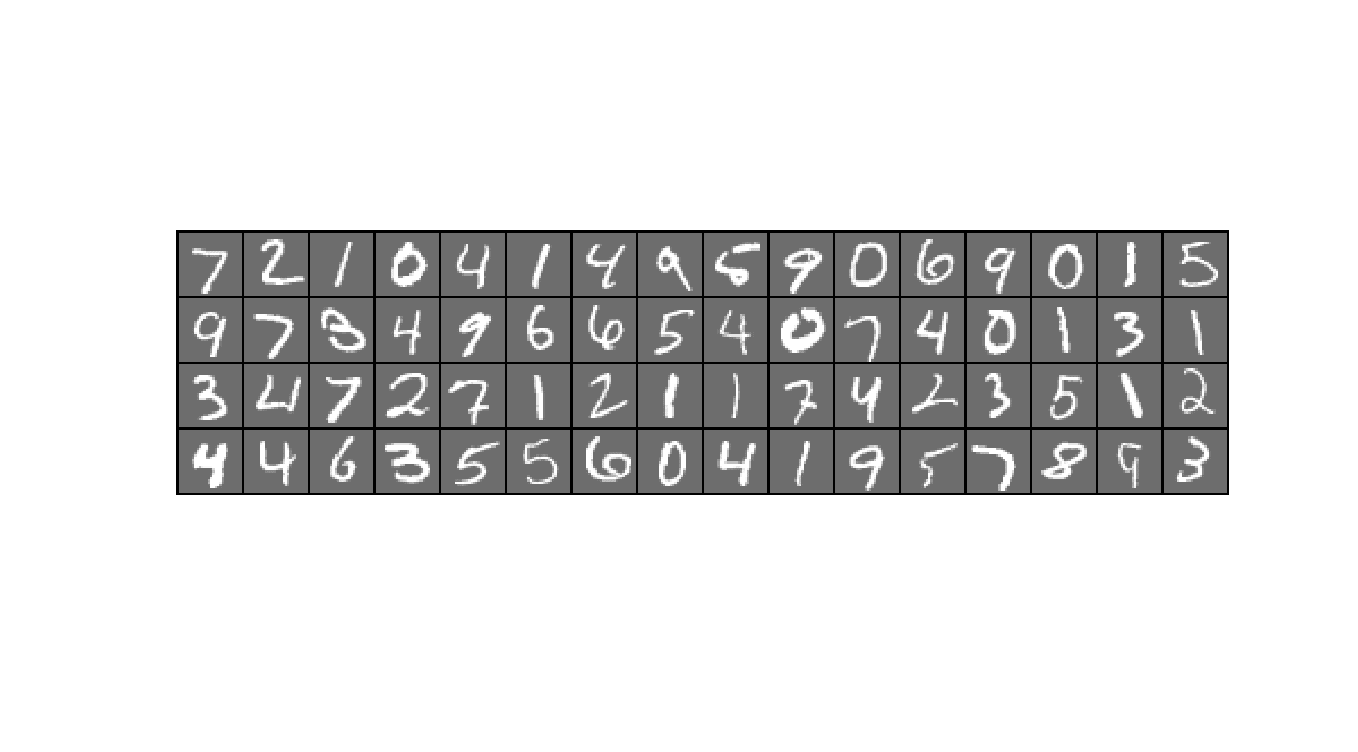}
\caption{Example of some of the handwritten digits from the MNIST dataset \cite{lecun1998gradient}.}
\label{fig:MNIST}
\end{figure}
We now consider an experiment with the MNIST dataset, which consists of 70,000 grayscale $28\times28$ pixel images of handwritten digits $0$--$9$ \cite{lecun1998gradient}. See Figure \ref{fig:MNIST} for some examples of MNIST digits. The experiment was conducted using the GraphLearning Python package and the code is available online~\cite{GraphLearning}. We construct the graph using all 70,000 MNIST images by connecting each data point to its nearest $k$ neighbors (in Euclidean distance) using Gaussian weights with variance $\sigma^2 = d_k^2/8$, where $d_k$ is the distance to the $k^{\rm th}$ nearest neighbor. The graph is then symmetrized by replacing the weight matrix $W$ with $\tfrac{1}{2}(W^T + W)$. Given some portion of the graph is labeled, the semi-supervised learning is conducted by a one-vs-rest approach, which computes solutions of  $10$ binary classifications problems, each  solving the Laplacian learning problem \eqref{eq:BVP}, and selects the most likely label for each digit.

For the experiment, we used 10 different labeling rates, labeling $m=1,2,4,8,16,32,64,128,256,512$ images \emph{per class}, which correspond to labeling rates of $0.014\%$ up to $7.3\%$. For each label rate, we ran 100 trials randomly selecting which images to provide as labels, and averaged the test error over all the trials. Figure  \ref{fig:MNISTerror} shows the error plots for this experiment. We ran the experiments on graphs with different connectivity length scales, choosing $k=10,20,30$ nearest neighbors. While our main Theorems (Theorems \ref{thm:wellposed1} and \ref{thm:wellposed2}) hold for $\eps$-graph constructions, we expect the results will hold for symmetrized $k$-NN graphs as well, using recently established pointwise consistency results with linear rates for $k$-NN graph Laplacians \cite{calder2019improved}.
\begin{figure}
\setlength\figureheight{0.3\textwidth}
\setlength\figurewidth{0.45\textwidth}
\centering
\scriptsize
% This file was created by matlab2tikz.
%
%The latest updates can be retrieved from
%  http://www.mathworks.com/matlabcentral/fileexchange/22022-matlab2tikz-matlab2tikz
%where you can also make suggestions and rate matlab2tikz.
%
\begin{tikzpicture}
\def\xl{-0.1}
\def\xu{6.3}
\def\yl{1.5}
\def\yu{4.5}
\def\bigxticks{(\yu-\yl)/100}
\def\smallxticks{(\yu-\yl)/200}
\def\bigyticks{(\xu-\xl)/100}
\def\smallyticks{(\xu-\xl)/200}

\begin{axis}[%
width=\figurewidth,
height=\figureheight,
scale only axis,
xmin={\xl-(\xu-\xl)/5},
xmax={\xu+(\xu-\xl)/5},
ymin={\yl-(\yu-\yl)/5},
ymax={\yu+(\yu-\yl)*0.05},
hide axis,
axis background/.style={fill=white!100},
legend style={at={(axis cs: {\xl+2*\bigyticks},{\yl+2*\bigxticks})}, anchor=south west, legend cell align=left, align=left, draw=white!15!black, font=\tiny, text = black}
]

\draw [black,->] (axis cs: \xl,\yl) -- (axis cs: {\xu+(\xu-\xl)*0.05},\yl) node[below right] {};
\draw [black,->] (axis cs: \xl,\yl) -- (axis cs: \xl,{\yu+(\yu-\yl)*0.05});
\node[black] at (axis cs: {(\xu+(\xu-\xl)*0.05+\xl)/2},{\yl-0.16*(\yu-\yl)}) {Number of Labels per Class};
\node[black,rotate=90] at (axis cs: {\xl-0.16*(\xu-\xl)},{(\yu+(\yu-\yl)*0.05+\yl)/2}) {Test Error \%};
\foreach \yValue in {10,30,50,70,90} {
    \edef\temp{\noexpand\draw [black,-] ({\xl+\bigyticks},{ln(\yValue)}) -- (\xl,{ln(\yValue)}) node[black,left] {\yValue};} 
    \temp
}
\foreach \yValue in {5,15,20,25,35,40,45,55,60,65,75,80,85} {
    \edef\temp{\noexpand\draw [black,-] ({\xl+\smallyticks},{ln(\yValue)}) -- (\xl,{ln(\yValue)}) node[black,left] {};} 
    \temp
}
\foreach \xValue in {1,2,4,8,16,32,64,128,256,512} {
	\edef\temp{\noexpand\draw [black,-] ({ln(\xValue)},{\yl+\bigxticks}) -- ({ln(\xValue)},\yl) node[black,below] {\xValue};}    
    \temp
}
\foreach \xValue in {} {
	\edef\temp{\noexpand\draw [black,-] ({ln(\xValue*10^4)},{\yl+\smallxticks}) -- ({ln(\xValue*10^4)},\yl) node[black,below] {};}    
    \temp
}

%\foreach \yValue in {1.5,2,2.5,3,3.5,4,4.5} {
%    \edef\temp{\noexpand\draw [-] ({\xl+(\xu-\xl)/100},\yValue) -- (\xl,\yValue) node[black,left] {\yValue};} 
%    \temp
%}
%\foreach \xValue in {2,3,4,5,6,7,8,9} {
%	\edef\temp{\noexpand\draw [-] (\xValue,{\yl+(\yu-\yl)/100}) -- (\xValue,\yl) node[black,below] {\xValue};}    
%    \temp
%}

\addplot [color=blue, mark=triangle, mark options={solid, blue}]
  table[row sep=crcr]{%
0	4.42858454731795\\
0.6931	4.31110646070351\\
1.3863	3.88645490971424\\
2.0794	3.0270080475575\\
2.7726	2.22353106308421\\
3.4657	1.9503573882683\\
4.1589	1.79443920884981\\
4.8520	1.67976589226713\\
5.5452	1.59143686132455\\
6.2383	1.51076608863073\\
};
\addlegendentry{k=10}

\addplot [color=red, mark=o, mark options={solid, red}]
  table[row sep=crcr]{%
0	4.45172992042352\\
0.6931	4.34045175263827\\
1.3863	3.99395855924528\\
2.0794	3.2703253070578\\
2.7726	2.46381919745752\\
3.4657	2.14412848930165\\
4.1589	1.97341413703431\\
4.8520	1.84827981964142\\
5.5452	1.75426526480048\\
6.2383	1.67094684901131\\
};
\addlegendentry{k=20}

\addplot [color=green, mark=diamond, mark options={solid, green}]
  table[row sep=crcr]{%
0	4.45595994871862\\
0.6931	4.35863261604527\\
1.3863	4.0701748129149\\
2.0794	3.40778506283046\\
2.7726	2.61497456012206\\
3.4657	2.26425368085537\\
4.1589	2.08536148419083\\
4.8520	1.95532285309342\\
5.5452	1.85775006060117\\
6.2383	1.77272954332125\\
};
\addlegendentry{k=30}

\end{axis}
\end{tikzpicture}%
\caption{Error plots for MNIST experiment showing testing error versus number of labels, averaged over 100 trials.}
\label{fig:MNISTerror}
\end{figure}
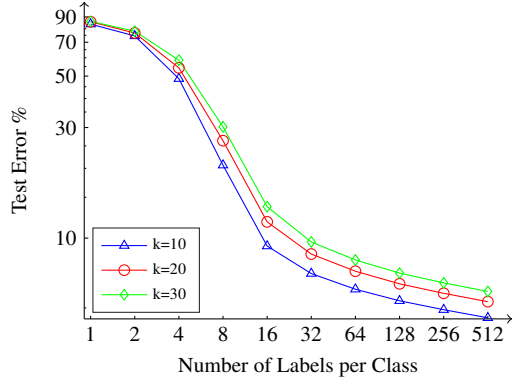

There are two important observations to make about the error plots in Figure \ref{fig:MNISTerror}. First, we see that the error decreases as the length scale of the graph decreases, as expected in the convergence rates in Theorems \ref{thm:wellposed1} and \ref{thm:wellposed2}. Second, if we fit the error to a power law of the form
\[\text{Error }\sim m^{-\alpha},\]
where $m$ is the number of labeled data points, we find that $\alpha=0.53$ for $k=10$, $\alpha=0.51$ for $k=20$, and $\alpha=0.49$ for $k=30$. Since $m$ is proportional to the label rate $\beta$ (more precisely, $\beta = m/n$), this aligns very closely with the dependence on $\beta$ in the convergence rate in Theorem \ref{thm:wellposed1}. However, we note in Figure \ref{fig:MNISTerror} that the rate $\alpha$ is not constant, and there appear to be several regimes. The low label rate regime of $m=1,2$ has a rate of roughly $\alpha=0.17$, while the moderate regime from $m=4$ to $m=16$ has a rate of approximately $\alpha=1$, and the high label rate regime from $m=64$ to $m=512$ has a rate of approximately $\alpha=0.15$.

\section{Conclusions} \label{sec:Conc}

Our results (stated for the hard constraint model) showed that: (1) when $p=2$ and $\eps_n\gg \l\frac{\log n}{n}\r^{\frac{1}{d+4}}$ that
\begin{enumerate}
\item if $\beta_n\gg \eps_n^2$ the limit is well-posed, and
\item if $\beta_n\ll \eps_n^2$ the limit is ill-posed;
\end{enumerate}
and (2) when $p=2$ and $\eps_n\lesssim \l\frac{\log n}{n}\r^{\frac{1}{d+4}}$ that
\begin{enumerate}
\item if $\beta_n\gg \eps_n^2$ and $\beta_n\gg \frac{\log n}{n\eps_n^d}$ the limit is well-posed, and
\item if $\beta_n\ll \eps_n^2$ the limit is ill-posed.
\end{enumerate}
Hence, there is a gap in the regime $\eps_n^2\ll\beta_n\lesssim \frac{\log n}{n\eps_n^d}$ which our results do not cover.
We conjecture that when $\eps_n^2\ll\beta_n\lesssim \frac{\log n}{n\eps_n^d}$ we remain in the well-posed regime.

Our results do not establish a well-posed regime for $p\neq 2$. We also conjecture that the ill-posed regime is sharp (upto perhaps logarithms), and therefore if $\eps_n^p\gg \beta_n$ or $n\eps_n^p\ll \log n$ then the limit is asymptotically well-posed.
Indeed, whenever $n\eps_n^p\ll 1$ (which can only happen when $p>d$) uniform convergence has already been established, see~\cite[Lemma 4.5]{slepcev19}, and hence the problem is already well-posed with finitely many constraints (formally corresponding to $\beta_n\sim \frac{1}{n}$).
Since we used the random walk interpretation of minimizers, which is specific to $p=2$, the techniques in this paper do not immediately generalize to prove well-posedness for the variational $p$-Laplacian for $p\neq 2$.

\section*{Acknowledgements}

JC was supported by NSF DMS Grant 1713691, and is grateful for the hospitality of the Center for Nonlinear Analyis at Carnegie Mellon University, and to Marta Lewicka for helpful discussions.  
DS is grateful to NSF for support via grant DMS-1814991. 
MT is grateful for the hospitality of the Center for Nonlinear Analysis at Carnegie Mellon University and the School of Mathematics at the University of Minnesota, for the support of the Cantab Capital Institute for the Mathematics of Information and Cambridge Image Analysis at the University of Cambridge, and has received funding from the European Research Council under the European Union's Horizon 2020 research and innovation programme grant agreement No 777826 (NoMADS) and grant agreement No 647812.

\appendixpage
\appendix

\section{Concentration Inequalities}\label{sec:azuma}
For completeness, we include some inequalities from probability theory.
We start with Azuma's inequality for supermartingales.

\begin{theorem}[Azuma's inequality]\label{thm:Azuma}
Let $X_0,X_1,X_2,X_3,\dots$ be a supermartingale with respect to a filtration $\F_1,\F_2,\F_3,\dots$ (i.e., $\E[X_{k}-X_{k-1}\left| \F_{k-1}\right.] \leq 0$). Assume that conditioned on $\F_{k-1}$ we have  $|X_{k}-X_{k-1}|\leq r$ almost surely for all $k$. Then for any $\vartheta>0$
\begin{equation}\label{eq:Azuma}
\P(X_k-X_0 \geq \vartheta) \leq \exp\left(-\frac{\vartheta^2}{2kr^2}\right).
\end{equation}
\end{theorem}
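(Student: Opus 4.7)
The plan is to prove Azuma's inequality via the standard exponential moment (Chernoff) method, adapted to the supermartingale setting. First I would apply the Chernoff bound: for any $\lambda > 0$,
\[
\P(X_k - X_0 \geq \vartheta) \leq e^{-\lambda \vartheta} \, \E\bigl[e^{\lambda(X_k - X_0)}\bigr],
\]
and the task reduces to controlling the moment generating function of $X_k - X_0$.

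Next I would telescope $X_k - X_0 = \sum_{j=1}^k D_j$ where $D_j := X_j - X_{j-1}$, and use the tower property conditioning on $\F_{k-1}$:
\[
\E\bigl[e^{\lambda(X_k - X_0)}\bigr] = \E\Bigl[ e^{\lambda(X_{k-1} - X_0)} \, \E\bigl[e^{\lambda D_k} \,\big|\, \F_{k-1}\bigr]\Bigr].
\]
The key step is a one-step Hoeffding-type bound on the conditional MGF: if $Y$ is a random variable with $\E[Y] \leq 0$ and $|Y| \leq r$ almost surely, then $\E[e^{\lambda Y}] \leq e^{\lambda^2 r^2 / 2}$. The quick proof of this lemma uses convexity of $y \mapsto e^{\lambda y}$ on $[-r, r]$ to write $e^{\lambda y} \leq \tfrac{r-y}{2r} e^{-\lambda r} + \tfrac{r+y}{2r} e^{\lambda r}$, take expectations, use $\E[Y] \leq 0$ to bound the result by $\cosh(\lambda r) - \tfrac{\E[Y]}{r}\sinh(\lambda r)$ (handling the sign of $\E[Y]$ carefully, noting $\sinh(\lambda r) \geq 0$), and then use $\cosh(t) \leq e^{t^2/2}$.

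Applying this lemma to $Y = D_k$ conditionally on $\F_{k-1}$ (where the supermartingale property gives $\E[D_k \mid \F_{k-1}] \leq 0$ and the hypothesis gives $|D_k| \leq r$) yields $\E[e^{\lambda D_k} \mid \F_{k-1}] \leq e^{\lambda^2 r^2/2}$. Iterating $k$ times gives
\[
\E\bigl[e^{\lambda(X_k - X_0)}\bigr] \leq e^{k \lambda^2 r^2 / 2},
\]
so
\[
\P(X_k - X_0 \geq \vartheta) \leq \exp\!\Bigl(-\lambda \vartheta + \tfrac{k\lambda^2 r^2}{2}\Bigr).
\]
Finally I would optimize over $\lambda > 0$, choosing $\lambda = \vartheta/(k r^2)$ to obtain the exponent $-\vartheta^2/(2kr^2)$ as claimed.

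The only mildly subtle step is the Hoeffding lemma for supermartingale increments rather than martingale increments: one must verify that the inequality $\E[e^{\lambda Y}] \leq e^{\lambda^2 r^2 / 2}$ survives the replacement of $\E[Y] = 0$ by $\E[Y] \leq 0$. This works because $\sinh(\lambda r) \geq 0$ for $\lambda, r \geq 0$, so the term $-\tfrac{\E[Y]}{r}\sinh(\lambda r)$ is nonnegative and can be absorbed in the right direction, preserving the bound. Everything else is the textbook martingale Chernoff argument.
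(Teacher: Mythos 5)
Your proposal is correct and follows essentially the same route as the paper's proof: the Chernoff bound, the convexity (Hoeffding-type) estimate on each conditional increment using $\E[X_k-X_{k-1}\mid \F_{k-1}]\leq 0$ together with $\sinh(\lambda r)\geq 0$, the bound $\cosh(t)\leq e^{t^2/2}$, iteration via the tower property, and the choice $\lambda=\vartheta/(kr^2)$. The only blemish is a sign slip in your one-step lemma: after taking conditional expectations the convexity bound gives $\cosh(\lambda r)+\tfrac{\E[Y]}{r}\sinh(\lambda r)$, and the supermartingale hypothesis lets you drop the \emph{nonpositive} term $\tfrac{\E[Y]}{r}\sinh(\lambda r)$ to get $\cosh(\lambda r)$; your intermediate expression $\cosh(\lambda r)-\tfrac{\E[Y]}{r}\sinh(\lambda r)$ is actually larger than $\cosh(\lambda r)$, so that step should be rewritten, but this is cosmetic and does not affect the validity of the argument.
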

\begin{proof}
We use the usual Chernoff bounding method to obtain
\[\P(X_k-X_0 \geq \vartheta)=\P\left(e^{s(X_k-X_0)}\geq e^{s\vartheta}\right)\leq e^{-s\vartheta}\E\left[e^{s(X_k-X_0)}\right]=e^{-s\vartheta}\E\left[ e^{s\sum_{i=1}^k(X_i-X_{i-1} )}\right],\]
for $s>0$ to be determined. Since $|X_k-X_{k-1}|\leq r$ conditioned on $\F_{k-1}$, we use convexity of $x\mapsto e^{sx}$ to obtain
\begin{align*}
\E\left[ \left. e^{s(X_k-X_{k-1} )} \right| \F_{k-1} \right] &\leq \E\left[ \left. e^{-sr} + \left( \frac{X_k-X_{k-1}+r}{r} \right)\sinh(sr) \right| \F_{k-1} \right]\\
&= e^{-sr} + \left( \frac{\E[X_k-X_{k-1}\left| \F_{k-1}\right.]+r}{r} \right)\sinh(sr)\\
&\leq e^{-sr} + \sinh(sr) = \cosh(sr)\leq e^{\frac{s^2r^2}{2}}.
\end{align*}
Therefore
\[\E\left[ e^{s\sum_{i=1}^k(X_i-X_{i-1} )}\right]=\E\left[e^{s\sum_{i=1}^{k-1}(X_i-X_{i-1} )} \E\left[ \left. e^{s(X_k-X_{k-1} )} \right| \F_{k-1} \right] \right]\leq e^{\frac{s^2r^2}{2}}\E\left[e^{s\sum_{i=1}^{k-1}(X_i-X_{i-1} )}  \right].\]
Continuing by induction we find that
\[\P(X_k-X_0 \geq \vartheta) \leq \exp\left(-s\vartheta + \frac{ks^2r^2}{2}\right).\]
Choosing $s=\vartheta/kr^2$ completes the proof.
\end{proof}

Next, we recall a concentration inequality from \cite{calder18a}.

\begin{lemma}[{\cite[Remark 7]{calder18AAA}}]
\label{lem:WellPosed:RWBounds:ConcIneq}
Let $Y_1,Y_2,Y_3,\dots,Y_n$ be a sequence of \emph{i.i.d}~random variables on $\bbR^d$ with Lebesgue density $\rho:\bbR^d\to \bbR$, let $\psi:\bbR^d \to \bbR$ be bounded and Borel measurable with compact support in a ball $B(x,r)$ for some $r>0$, and define
\[ Y = \sum_{i=1}^n \psi(Y_i).\]
Then, for any $0 \leq \vartheta \leq 1$,
\[ \bbP\l |Y-\bbE(Y)|\geq c\|\psi\|_{\Linfty(B(x,r))} nr^d\vartheta\r \leq 2\exp(-Cnr^d\vartheta^2), \]
where $c>0$, $C>0$ are constants depending only on $\|\rho\|_{\Linfty}$ and $d$. 
\end{lemma}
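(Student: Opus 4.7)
The plan is to apply a standard Bernstein-type concentration inequality to the centered summands and to exploit the compact support of $\psi$ to get a sharp bound on the variance. Set $X_i := \psi(Y_i) - \bbE[\psi(Y_i)]$ so that $Y - \bbE[Y] = \sum_{i=1}^n X_i$, the $X_i$ are i.i.d., mean zero, and uniformly bounded by $M := 2\|\psi\|_{\Linfty(B(x,r))}$.

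The key input is the variance estimate. Since $\psi$ is supported in $B(x,r)$ and $\rho$ is bounded, I would write
\[ \bbE[\psi(Y_i)^2] = \int_{B(x,r)} \psi(y)^2 \rho(y) \, \dd y \leq \|\psi\|_{\Linfty(B(x,r))}^2 \|\rho\|_{\Linfty} \Vol(B(x,r)) \leq C_d \|\rho\|_{\Linfty} \|\psi\|_{\Linfty(B(x,r))}^2 r^d, \]
which after dropping the $-\bbE[\psi(Y_i)]^2$ term gives $\sigma^2 := \Var(X_i) \leq C \|\psi\|_{\Linfty}^2 r^d$ with a constant depending only on $\|\rho\|_{\Linfty}$ and $d$.

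Next, I would invoke the classical Bernstein inequality: for any $t>0$,
\[ \bbP\l \la \sum_{i=1}^n X_i \ra \geq t \r \leq 2\exp\l -\frac{t^2}{2 n\sigma^2 + \tfrac{2}{3} M t} \r. \]
Choosing $t = c \|\psi\|_{\Linfty} n r^d \vartheta$ with $c$ small and using $\vartheta\leq 1$, both terms in the denominator are controlled by $C n \|\psi\|_{\Linfty}^2 r^d$ (the first directly via the variance bound, the second because $Mt \leq 2c\|\psi\|_{\Linfty}^2 n r^d \vartheta \leq 2c\|\psi\|_{\Linfty}^2 n r^d$). Substituting yields an exponent of order $-n r^d \vartheta^2$, which is the claimed bound.

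No real obstacle is expected here; the only subtle point is to verify that the constants in the Bernstein denominator can be absorbed uniformly for $\vartheta\in[0,1]$, which is why the restriction $\vartheta\leq 1$ is needed, and to check that the constants $c, C$ depend only on $d$ and $\|\rho\|_{\Linfty}$, which follows since the only places these quantities enter are through the volume of $B(x,r)$ (a dimensional constant times $r^d$) and through the $\Linfty$ bound on $\rho$.
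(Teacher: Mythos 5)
Your proof is correct. The paper itself gives no argument for this lemma---it is quoted from \cite[Remark 7]{calder18AAA}---but the route you take is the standard (and almost certainly the intended) one: center, bound the variance by $\|\psi\|_{\Linfty}^2\|\rho\|_{\Linfty}\Vol(B(x,r))\lesssim\|\psi\|_{\Linfty}^2 r^d$ using compact support, and feed this into Bernstein's inequality (which the paper also records as~\eqref{eq:bernstein}). One small remark: the restriction to small $c$ is not actually needed---after setting $t=c\|\psi\|_{\Linfty}nr^d\vartheta$ and bounding the Bernstein denominator by $\l 2C_0 + \tfrac{4}{3}c\r\|\psi\|_{\Linfty}^2 nr^d$ (using $\vartheta\leq 1$), every $c>0$ yields a valid $C=\tfrac{c^2}{2C_0+\frac{4}{3}c}>0$; choosing $c$ small only shrinks $C$, which is harmless. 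The dependence of the constants only on $d$ and $\|\rho\|_{\Linfty}$ is tracked correctly.
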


We also recall Bernstein's inequality \cite{boucheron2013concentration}. For $Y_1,\dots,Y_n$ \emph{i.i.d.}~with variance $\sigma^2 = \E((Y_i-\E[Y_i])^2)$, if $|Y_i|\leq M$ almost surely for all $i$ then Bernstein's inequality states that for any $\vartheta>0$
\begin{equation}\label{eq:bernstein}
\P\left( \left| \sum_{i=1}^n Y_i - \E[Y_i] \right|> n\vartheta\right)\leq 2\exp\left( -\frac{n\vartheta^2}{2\sigma^2 + 4M\vartheta/3} \right).
\end{equation}

\section{\texorpdfstring{$\TLp$}{TLp} Convergence of Minimizers} \label{sec:app:TLpConv}

The $\TLp$ topology was introduced in~\cite{garciatrillos16} to define a discrete-to-continuum convergence for variational problems on graphs (as is the setting in this paper).
The idea is to consider discrete, and continuum, functions as pairs: $(\mu,u)$ where $\mu\in\cP(\Omega)$ and $u\in \Lp(\mu)$.
For example, in the discrete setting we choose $\mu_n = \frac{1}{n} \sum_{i=1}^n \delta_{x_i}$, where $x_i\iid\mu\in\cP(\Omega)$, to be the \emph{empirical measure} then $u_n\in \Lp(\mu_n)$ implies that $u_n:\Omega_n\to \bbR$.
To define a metric we work on the space:
\[ \TLp(\Omega) := \lb (\mu,u) \,:\, \mu\in\cP_p(\Omega) \text{ and } u\in \Lp(\mu) \rb. \]
This space is a metric with
\begin{equation} \label{eq:Prelim:TLp:TLpKant}
\dTLp((\mu,u),(\nu,v)) := \inf_{\pi\in \Pi(\mu,\nu)} \sqrt[p]{\int_{\Omega\times \Omega} |x-y|^p + |u(x) - v(y)|^p \, \dd \pi(x,y)}
\end{equation}
where $\Pi(\mu,\nu)$ is the subset of probability measures on $\Omega\times\Omega$ such that the first marginal is $\mu$ and the second marginal is $\nu$.
We call any $\pi\in\Pi(\mu,\nu)$ a \emph{transport plan}.
The proof that $(\TLp,\dTLp)$ is a metric space follows from its connection to optimal transport, we refer to~\cite[Remark 3.4]{garciatrillos16} for more details.

In the setting of this paper we can characterize $\TLp$ convergence as follows (the following holds due to existence of a density $\rho$ of $\mu$).
A function $T:\Omega\to\Omega$ is a \emph{transport map} between $\mu$ and $\nu$ if $T_{\#}\mu=\nu$, where the pushforward of a measure is defined by
\[ T_{\#} \mu(A) = \mu(T^{-1}(A)) = \mu\l\lb x\in \Omega\,:\, T(x) \in A\rb\r \qquad \text{for any measurable } A\subset \Omega. \]
In the notation of transport maps the $\TLp$ distance can be written 
\begin{equation} \label{eq:Prelim:TLp:TLpMong}
\dTLp((\mu,u),(\nu,v)) = \inf_{T_{\#}\mu = \nu} \sqrt[p]{\int_\Omega |x-T(x)|^p + |u(x) - v(T(y))|^p \, \dd \mu(x)}.
\end{equation}
(In general~\eqref{eq:Prelim:TLp:TLpKant} and~\eqref{eq:Prelim:TLp:TLpMong} are not equivalent but in special cases -- such as in the setting of this paper -- the two formulations coincide, in optimal transport~\eqref{eq:Prelim:TLp:TLpKant} would be called the Kantorovich formulation and~\eqref{eq:Prelim:TLp:TLpMong} the Monge formulation.)
The following result can be found in~\cite[Proposition 3.12]{garciatrillos16}.

\begin{proposition}
\label{prop:app:TLpEquiv}
Let $\Omega\subset\bbR^d$ be open, $(\mu,u),(\mu_n,u_n)\in\TLp(\Omega)$ for all $n\in \bbN$ and assume $\mu$ is absolutely continuous with respect to the Lebesgue measure.
Then, $(\mu_n,u_n)\toTLp (\mu,u)$ if and only if $\mu_n\weakstarto \mu$ and for any sequence of transportation maps $T_n$ satisfying $(T_n)_{\#}\mu = \mu_n$ and $\|\Id - T_n\|_{\Lone(\mu)}\to 0$ we have
\[ \int_\Omega |u(x) - u_n(T_n(x))|^p \, \dd \mu(x) \to 0. \] 
\end{proposition}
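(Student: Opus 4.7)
The plan is to use the characterization of $\dTLp$ via optimal transport, leveraging the absolute continuity of $\mu$ to realize plans by maps. Since $\mu\ll \mathcal{L}^d$, any optimal plan for the cost $|x-y|^p + |u(x)-v(y)|^p$ on $\Omega\times\Omega$ is concentrated on the graph of a map (a Brenier-type result). For the forward direction ($\Rightarrow$), projection to the first marginal gives $W_p(\mu_n,\mu) \le \dTLp((\mu,u),(\mu_n,u_n)) \to 0$, hence $\mu_n \weakstarto \mu$. Moreover a near-optimal plan yields a particular transport map $S_n$ with $(S_n)_\#\mu = \mu_n$ for which both $\|\Id - S_n\|_{\Lp(\mu)} \to 0$ and $\|u_n\circ S_n - u\|_{\Lp(\mu)}\to 0$.

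The heart of the forward direction is to promote this to an arbitrary $T_n$ with $\|\Id - T_n\|_{\Lone(\mu)}\to 0$. I would approximate $u$ by $\tilde{u}\in C_c(\Omega)$ and exploit the pushforward identity
\[ \|f\circ T_n\|_{\Lp(\mu)} = \|f\|_{\Lp(\mu_n)} = \|f\circ S_n\|_{\Lp(\mu)}, \]
which holds for any Borel $f$ because $(T_n)_\#\mu = (S_n)_\#\mu = \mu_n$. Applying the triangle inequality,
\[ \|u_n\circ T_n - u\|_{\Lp(\mu)} \leq \|u_n - \tilde{u}\|_{\Lp(\mu_n)} + \|\tilde{u}\circ T_n - \tilde{u}\|_{\Lp(\mu)} + \|\tilde{u} - u\|_{\Lp(\mu)}. \]
The middle term vanishes because the uniform continuity of $\tilde{u}$ combined with $T_n\to \Id$ in $\mu$-measure (from $\Lone$ convergence of $\Id - T_n$) yields $\tilde{u}\circ T_n \to \tilde{u}$ pointwise and boundedly. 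Rewriting $\|u_n - \tilde{u}\|_{\Lp(\mu_n)} = \|u_n\circ S_n - \tilde{u}\circ S_n\|_{\Lp(\mu)}$ and applying the same triangle decomposition with $S_n$ in place of $T_n$ bounds its limsup by $\|u - \tilde{u}\|_{\Lp(\mu)}$. Sending $\tilde{u}\to u$ in $\Lp(\mu)$ closes this direction.

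For the reverse direction ($\Leftarrow$), the existence of some $T_n$ with $(T_n)_\#\mu = \mu_n$ and $\|\Id - T_n\|_{\Lone(\mu)}\to 0$ follows from $\mu_n \weakstarto \mu$ (on a compactly supported family) together with Brenier's theorem. Given such a $T_n$, the hypothesis $\|u_n\circ T_n - u\|_{\Lp(\mu)}\to 0$ combined with the admissible coupling $\pi_n = (\Id\times T_n)_\#\mu$ gives
\[ \dTLp((\mu,u),(\mu_n,u_n))^p \leq \int_\Omega |x - T_n(x)|^p \, \dd\mu(x) + \int_\Omega |u(x) - u_n(T_n(x))|^p \, \dd\mu(x), \]
and both integrals vanish: the second by hypothesis, the first because $|x - T_n(x)|$ is uniformly bounded on the (essentially compact) support of $\mu$, so $\Lone$ convergence upgrades to $\Lp$ convergence via the interpolation $|x-T_n(x)|^p \leq D^{p-1}|x-T_n(x)|$.

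The main technical obstacle is the forward direction's claim that the limit holds for \emph{every} admissible $T_n$, not just the optimal one. Since $u_n$ has no pointwise regularity, the continuous approximation together with the exact $\Lp$-norm preservation under pushforward is precisely what makes the argument go through; neither ingredient alone suffices, and in particular a naive attempt to invoke uniform continuity directly on $u_n$ will fail.
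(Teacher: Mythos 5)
The paper does not actually prove this proposition; it quotes it from \cite[Proposition~3.12]{garciatrillos16}. Your strategy is essentially the standard one behind that result (extract one good sequence of couplings from near-optimality, transfer to an arbitrary stagnating sequence $T_n$ by approximating $u$ with $\tilde u\in C_c(\Omega)$ and using the pushforward identity, and in the converse direction test $\dTLp$ with the plan $(\Id\times T_n)_{\#}\mu$), and the triangle-inequality bookkeeping is correct. However, the step producing $S_n$ is not justified as written: for the cost $|x-y|^p+|u(x)-u_n(y)|^p$ with $u,u_n$ merely $\Lp$ functions there is no twist structure, so there is no Brenier-type theorem guaranteeing that an optimal plan is concentrated on a graph. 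What is true, and is all you need, is that for an atomless (here absolutely continuous) first marginal, transport maps are dense in cost among transport plans; this yields maps $S_n$ with $(S_n)_{\#}\mu=\mu_n$ whose cost exceeds $\dTLp((\mu,u),(\mu_n,u_n))^p$ by at most $1/n$, which is exactly what your argument uses. Alternatively you can avoid maps entirely: the identity $\|f\|_{\Lp(\mu_n)}^p=\int_{\Omega\times\Omega}|f(y)|^p\,\dd\pi_n(x,y)$ holds for any $\pi_n\in\Pi(\mu,\mu_n)$, so your estimate of $\|u_n-\tilde u\|_{\Lp(\mu_n)}$ runs verbatim with a near-optimal plan in place of $S_n$, using uniform continuity of $\tilde u$ and the fact that $\int|x-y|^p\,\dd\pi_n\to 0$.

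A second point to make explicit: in the converse direction you bound $\int_\Omega|x-T_n(x)|^p\,\dd\mu\leq D^{p-1}\int_\Omega|x-T_n(x)|\,\dd\mu$, which presumes a uniform bound $|x-T_n(x)|\leq D$, i.e.\ boundedness of $\Omega$ (or of the supports). The proposition as quoted only assumes $\Omega$ open, and with merely $\Lone$-stagnating maps and weak$^*$ convergence this upgrade (and indeed the implication itself) can fail on unbounded domains: take $u\equiv u_n\equiv 0$ and let $\mu_n$ move $\mu$-mass $1/n$ a distance $n^{1/p}$, so that $\|\Id-T_n\|_{\Lone(\mu)}\to 0$ and $\mu_n\weakstarto\mu$ while $W_p(\mu_n,\mu)\not\to 0$, hence $\dTLp\not\to0$. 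In the present paper this is harmless since the result is only invoked under {\bf (A1)}, where $\Omega$ is bounded, but you should state that you are using this (and the same boundedness is what lets weak$^*$ convergence give you a stagnating sequence in the first place, via $W_p(\mu_n,\mu)\to0$ and again the density of maps among plans, rather than Brenier's theorem per se).
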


In our context the sequence of measures $\mu_n$ are the empirical measure which, with probability one, converge weak$^*$ to the true data generating measure $\mu$ when data points are iid.
Hence, it is enough to find a transportation map converging to the identity.
With an abuse of the definition we will often say $u_n$ converges to $u$ in $\TLp$ when we mean $(\mu_n,u_n)$ converges to $(\mu,u)$ in $\TLp$. 
\vspace{\baselineskip}

With the above notion of convergence we can define a topology in which to study variational limits.
In particular the $\TLp$ space gives us a way to define $\Gamma$-convergence of discrete-to-continuum functionals.
We recall the definition of almost sure $\Gamma$-convergence.

\begin{mydef}[$\Gamma$-convergence]
\label{def:Prelim:TLp:Gamma}
Let $(Z,d)$ be a metric space, $\Lzero(Z;\bbR\cup\{\pm \infty\})$ be the set of measurable functions from $Z$ to $\bbR\cup\{\pm \infty\}$, and $(\cX,\bbP)$ be a probability space.
The function $\cX\ni\omega\mapsto E_n^{(\omega)} \in \Lzero(Z;\bbR\cup\{\pm \infty\})$ is a random variable.
We say $E_n^{(\omega)}$ $\Gamma$-converges almost surely on the domain $Z$ to $E_\infty :Z\to \bbR\cup\{\pm\infty\}$ with respect to $d$, and write $E_\infty = \Glim_{n \to \infty} E_n^{(\omega)}$, if there exists a set $\cX^\prime\subset \cX$ with $\bbP(\cX^\prime) = 1$, such that for all $\omega\in \cX^\prime$ and all $f\in Z$:
\begin{itemize}
\item[(i)] (liminf inequality) for every sequence $\{f_n\}_{n=1}^\infty$ converging to $f$
\[ E_\infty(f) \leq \liminf_{n\to \infty} E_n^{(\omega)}(f_n), \text{ and } \]
\item[(ii)] (recovery sequence) there exists a sequence $\{f_n\}_{n=1}^\infty$ converging to $f$ such that
\[ E_\infty(f) \geq \limsup_{n\to \infty} E_n^{(\omega)}(f_n). \]
\end{itemize}
\end{mydef}

The key property of $\Gamma$-convergence is that, when combined with a compactness result, it implies the convergence of minimizers.
In particular, the following theorem is fundamental in the theory of $\Gamma$-convergence.

\begin{theorem}[Convergence of Minimizers]
\label{thm:Prelim:TLp:Conmin}
Let $(Z,d)$ be a metric space and $(\cX,\bbP)$ be a probability space.
The function $\cX\ni\omega\mapsto E_n^{(\omega)} \in \Lzero(Z;\bbR\cup\{\pm \infty\})$ is a random variable.
Let $f_n^{(\omega)}$ be a minimizing sequence for $E_n^{(\omega)}$.
If, with probability one, the set $\{f_n^{(\omega)}\}_{n=1}^\infty$ is pre-compact and $E_\infty = \Glim_n E_n^{(\omega)}$ where $E_\infty:Z\to[0,\infty]$ is not identically $+\infty$ then, with probability one,
\[ \min_Z E_\infty = \lim_{n\to \infty} \inf_Z E_n^{(\omega)}. \]
Furthermore any cluster point of $\{f_n^{(\omega)}\}_{n=1}^\infty$ is almost surely a minimizer of $E_\infty$.
\end{theorem}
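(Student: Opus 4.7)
The plan is the standard two-inequality argument, carried out almost surely on the probability-one event provided by the $\Gamma$-convergence hypothesis. Fix $\omega$ in that event; we suppress $\omega$ from the notation below. The crux is to combine the liminf inequality with a minimizing sequence to bound $\inf E_\infty$ from below, and combine the limsup inequality (recovery sequences) with any test point to bound $\limsup \inf E_n$ from above; compactness is what lets us actually extract a cluster point and identify it as a minimizer.

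First, I would extract a cluster point. By compactness of $\{f_n\}_{n=1}^\infty$, along any subsequence one can pass to a further subsequence $f_{n_k} \to f^\star$ in $(Z,d)$. Applying the liminf inequality of $\Gamma$-convergence to this sequence gives
\[
E_\infty(f^\star) \;\leq\; \liminf_{k\to\infty} E_{n_k}(f_{n_k}) \;=\; \liminf_{k\to\infty} \inf_Z E_{n_k},
\]
using that $f_{n_k}$ is minimizing (up to a $o(1)$ error, which is harmless after passing to a subsequence and adjusting). This bounds $E_\infty(f^\star)$ by the liminf of the infima.

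Next, I would bound the infima from above by $E_\infty(g)$ for an arbitrary $g \in Z$. Since $E_\infty \not\equiv +\infty$, pick $g$ with $E_\infty(g) < \infty$ (if $E_\infty(g)=+\infty$ the inequality is trivial). By the recovery-sequence part of $\Gamma$-convergence there exists $g_n \to g$ with $\limsup_n E_n(g_n) \leq E_\infty(g)$, hence
\[
\limsup_{n\to\infty} \inf_Z E_n \;\leq\; \limsup_{n\to\infty} E_n(g_n) \;\leq\; E_\infty(g).
\]
Combining the two displays and taking the infimum over $g$ yields
\[
E_\infty(f^\star) \;\leq\; \liminf_{k\to\infty} \inf_Z E_{n_k} \;\leq\; \limsup_{n\to\infty} \inf_Z E_n \;\leq\; \inf_Z E_\infty \;\leq\; E_\infty(f^\star),
\]
so all quantities are equal. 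This simultaneously shows that $f^\star$ is a minimizer of $E_\infty$, that $\min_Z E_\infty$ is attained, and that $\inf_Z E_{n_k} \to \min_Z E_\infty$ along the chosen subsequence.

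Finally, to upgrade from subsequential to full convergence of $\inf_Z E_n$, I would apply the Urysohn-type principle: since every subsequence of $\{\inf_Z E_n\}$ admits a further subsequence converging to $\min_Z E_\infty$, the entire sequence converges to $\min_Z E_\infty$. The same Urysohn argument applied to cluster points of $\{f_n\}$ shows that every such cluster point is a minimizer of $E_\infty$, establishing the last assertion. The main obstacle, and really the only nontrivial input, is the verification of the hypotheses (compactness and $\Gamma$-convergence) on a common probability-one set, which is why Definition~\ref{def:Prelim:TLp:Gamma} bakes the almost-sure qualifier into the statement; once one is on that event the argument is deterministic and proceeds exactly as above.
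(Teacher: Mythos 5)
The paper states this theorem without proof, as the standard fundamental theorem of $\Gamma$-convergence, so there is no in-paper argument to compare against; your proposal supplies exactly the standard argument, and it is correct. One step you gloss over is the use of the liminf inequality along a subsequence $f_{n_k}\to f^\star$, whereas Definition~\ref{def:Prelim:TLp:Gamma} states it only for full sequences indexed by $n$; this is easily repaired by extending $\{f_{n_k}\}$ to a full sequence $\tilde f_n$ (constant between consecutive indices $n_k$), noting $\tilde f_n\to f^\star$, and using $E_\infty(f^\star)\leq\liminf_n E_n(\tilde f_n)\leq\liminf_k E_{n_k}(f_{n_k})$, so the gap is cosmetic. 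The remaining ingredients — recovery sequences at an arbitrary $g$ with $E_\infty(g)<\infty$ to bound $\limsup_n\inf_Z E_n$ by $\inf_Z E_\infty$, the closing chain of inequalities, the Urysohn subsequence principle for full convergence of the infima, and working on the intersection of the two probability-one events — are all handled correctly.
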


The theorem is also true if we replace minimizers with almost minimizers.

We recall the definition of our discrete unconstrained functional $\cEpneps$, defined by~\eqref{eq:MainRes:illposed:Lp}, and our continuum unconstrained functional $\cEpinfty$, defined by~\eqref{eq:MainRes:illposed:cEpinfty}.
When $p=1$ it was shown in~\cite{garciatrillos16} that, with probability one, $\Glim_{n\to\infty} \cE^{(1)}_{n,\eps_n} = \cE^{(1)}_\infty$ and $\cE^{(1)}_{n,\eps_n}$ satisfies a compactness property where $\cE^{(1)}_\infty$ is a weighted total variation norm.
The proof generalizes almost verbatim for $p>1$ with the additional condition that, if $d=2$, $\eps_n\gg \frac{(\log n)^{\frac34}}{\sqrt{n}}$.
The additional assumption when $d=2$ has already been shown to be unnecessary.
For example, in~\cite{garciatrillos16a} the authors use the $\Gamma$-convergence result (with the more restrictive lower bound for $d=2$) to prove convergence of Cheeger and Ratio  cuts, this lower bound was removed in~\cite{muller18AAA} using a refined grid matching technique within the $\Gamma$-convergence argument.
Later results, i.e.~\cite{caroccia19,calder2019improved}, avoid the additional assumption via comparing the empirical measure measure to an intermediary measure; we follow this argument below.
For the following result we do not need the compact support assumption in {\bf (A3)} and so we restate the third assumption.

\begin{enumerate}
\item[\bf (A3')] The interaction potential $\eta:[0,\infty)\to[0,\infty)$ is non-increasing, positive and continuous at $t=0$.
We define $\eta_\eps = \frac{1}{\eps^d} \eta(\cdot/\eps)$ and assume $\sigma_\eta:= \int_{\bbR^d} \eta(|x|) |x_1|^2 \, \dd x<\infty$.
\end{enumerate}

\begin{proposition}
\label{prop:Prelim:TLp:GamConvEn}
Assume {\bf (A1,A2,A3')}, $\eps_n\gg\sqrt[d]{\frac{\log n}{n}}$ and $p> 1$ we define $\cEpneps$ by~\eqref{eq:MainRes:illposed:Lp} and $\cEpinfty$ by~\eqref{eq:MainRes:illposed:cEpinfty}.
Then, with probability one,
\[ \Glim_{n\to\infty} \cEpnepsn = \cEpinfty. \]
Furthermore, if $\{u_n\}_{n=1}^\infty$ is a sequence satisfying $\sup_{n\in \bbN} \|u_n\|_{\Lp(\mu_n)}<\infty$ and $\sup_{n\in\bbN}\cEpnepsn(u_n)<\infty$ then $\{u_n\}_{n=1}^\infty$ is pre-compact in $\TLp$ and any limit point is in $\Wkp{1}{p}(\Omega)$.
\end{proposition}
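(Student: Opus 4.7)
\textbf{Plan of proof for Proposition~\ref{prop:Prelim:TLp:GamConvEn}.}

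The plan is to adapt the $\Gamma$-convergence / compactness framework developed in~\cite{garciatrillos16} for $p=1$ to $p>1$, and to bypass the suboptimal $\infty$-Wasserstein rate in dimension $d=2$ by inserting an intermediary measure, following~\cite{caroccia19,calder2019improved}. The starting point is Proposition~\ref{prop:app:TLpEquiv}: $TL^p$-convergence of $u_n\in L^p(\mu_n)$ to $u\in L^p(\mu)$ is equivalent to $\|u\circ T_n - u_n\|_{L^p(\mu)}\to 0$ along \emph{any} sequence of transportation maps $T_n$ with $(T_n)_\#\mu=\mu_n$ and $\|\Id-T_n\|_{L^1(\mu)}\to 0$. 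Thus all estimates are carried out after pulling $u_n$ back to $\Omega$ via a suitable $T_n$, on which the discrete energy becomes a nonlocal energy with respect to $\mu$.

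First I would establish the \emph{liminf inequality} and the \emph{compactness property} simultaneously. Given $u_n\to u$ in $TL^p$, pull back $v_n=u_n\circ T_n\in L^p(\mu)$ and rewrite $\cEpnepsn(u_n)$ as a double integral against $\mu\otimes\mu$ with kernel $\eta_{\eps_n}(|T_n(x)-T_n(y)|)$. Provided $\|\Id-T_n\|_{L^\infty(\mu)}\ll\eps_n$, a standard bi-Lipschitz comparison of the kernels $\eta_{\eps_n}(|T_n(x)-T_n(y)|)$ and $\eta_{\eps_n'}(|x-y|)$ (for a slightly enlarged $\eps_n'=(1+o(1))\eps_n$) reduces the estimate to the classical nonlocal functional
\[ F_\eps(v)=\frac{1}{\eps^p}\int_{\Omega}\int_{\Omega}\eta_\eps(|x-y|)|v(x)-v(y)|^p\rho(x)\rho(y)\,\dd x\,\dd y. \]
Bourgain--Brezis--Mironescu / Ponce-type results then give $\liminf_n F_{\eps_n}(v_n)\geq \sigma_\eta\int_\Omega|\nabla v|^p\rho^2\,\dd x$ when $v_n\to v$ in $L^p(\mu)$, together with $W^{1,p}$-compactness of any sequence with bounded $F_\eps$ and bounded $L^p$ norm. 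To produce the transportation map $T_n$ with $\|\Id-T_n\|_{L^\infty}\ll\eps_n$ we use the almost-sure $\infty$-Wasserstein estimate $d_{W^\infty}(\mu_n,\mu)\lesssim (\log n/n)^{1/d}$ for $d\geq 3$. For $d=2$ I insert an intermediary measure $\tilde\mu_n$ (for instance uniform on a grid of sidelength $h_n$ with $\sqrt{\log n/n}\ll h_n\ll \eps_n$) and use the triangle inequality $d_{W^\infty}(\mu_n,\mu)\leq d_{W^\infty}(\mu_n,\tilde\mu_n)+d_{W^\infty}(\tilde\mu_n,\mu)$, where each piece is controlled separately: the second by a direct transport from the grid to $\mu$ (using $\rho\in C^0$ and $h_n\to 0$), the first by the sharp matching estimates on short scales. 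This gives the desired bound $\|\Id-T_n\|_{L^\infty(\mu)}\ll\eps_n$ in all $d\geq 2$.

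For the \emph{limsup} (recovery sequence), given $u\in L^p(\mu)$ with $\cEpinfty(u)<\infty$, first approximate $u$ in $W^{1,p}(\Omega)$ by $u^{(k)}\in C^\infty(\bar\Omega)$ (density of smooth functions), then set $u_n^{(k)}=u^{(k)}\big|_{\Omega_n}$. For smooth $u^{(k)}$ a direct Taylor expansion shows $\cEpnepsn(u_n^{(k)})\to \cEpinfty(u^{(k)})$ pointwise in $\omega$, once one has concentration of the empirical double sum around its mean; these concentration bounds are supplied by Lemma~\ref{lem:WellPosed:RWBounds:ConcIneq} and a union bound, and require exactly $n\eps_n^d\gg\log n$, i.e.\ the assumed lower bound. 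A diagonal extraction then yields the recovery sequence for the original $u$. The compactness conclusion --- that limit points lie in $W^{1,p}(\Omega)$ --- follows from the liminf argument above, since if $\sup_n\cEpnepsn(u_n)<\infty$ then after pull-back $\sup_n F_{\eps_n'}(v_n)<\infty$, and classical compactness for such nonlocal energies forces precompactness of $v_n$ in $L^p(\mu)$ with limit in $W^{1,p}(\Omega)$. The main technical obstacle is the dimension-two transportation estimate; every other step is a routine adaptation of~\cite{garciatrillos16,slepcev19}.
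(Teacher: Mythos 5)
Your overall architecture --- pull back through a transport map, compare kernels at a slightly adjusted length scale, reduce to the nonlocal energy, invoke BBM/Ponce-type $\Gamma$-convergence and compactness --- matches the paper's proof, and your Taylor-expansion-plus-concentration route for the recovery sequence is a perfectly viable alternative to the paper's transport-map comparison; it even clarifies why $n\eps_n^d \gg \log n$ is the natural hypothesis.

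However, there is a genuine gap in your treatment of the liminf/compactness step in dimension $d=2$. You aim to produce a transport map $T_n$ with $(T_n)_\#\mu=\mu_n$ and $\|\Id-T_n\|_{\Linfty}\ll\eps_n$, and to do so in $d=2$ you propose to bound $d_{W^\infty}(\mu_n,\mu)$ by a triangle inequality through a grid-supported measure. This cannot succeed: the $\infty$-transportation cost between the empirical measure $\mu_n$ and $\mu$ in dimension two is \emph{sharply} of order $(\log n)^{3/4}/\sqrt{n}$ (Leighton--Shor), which \emph{exceeds} the allowed lower bound $(\log n/n)^{1/2}$ on $\eps_n$. No triangle inequality can improve a lower bound on $d_{W^\infty}(\mu_n,\mu)$, so for $\eps_n$ in the range $(\log n/n)^{1/2}\ll\eps_n\ll(\log n)^{3/4}/\sqrt{n}$ there is simply no transport map from $\mu$ to $\mu_n$ with $\infty$-displacement $\ll\eps_n$. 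This is precisely the regime the proposition is meant to cover.

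The correct idea, which the paper takes from \cite{caroccia19} and \cite{calder2019improved}, is to \emph{not} control $d_{W^\infty}(\mu_n,\mu)$ at all. One constructs a \emph{random} absolutely continuous measure $\tilde\mu_n$, adapted to the sample (e.g.\ piecewise constant with the empirical cell counts as weights), so that (i) $d_{W^\infty}(\mu_n,\tilde\mu_n)\ll (\log n/n)^{1/d}\ll\eps_n$, because the transport only has to rearrange mass inside each small box, and (ii) $\|\tilde\rho_n-\rho\|_{\Linfty}\to 0$, by Bernstein on the cell counts. The discrete energy is then pulled back through the transport from $\tilde\mu_n$ to $\mu_n$ to a nonlocal energy weighted by $\tilde\rho_n$, and the final replacement of $\tilde\rho_n$ by $\rho$ is done by comparing densities in $\Linfty$ \emph{inside the integrand} --- the measures $\tilde\mu_n$ and $\mu$ are never compared in $\infty$-Wasserstein. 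You then also have to switch the transport map at the end (compose with a transport from $\mu$ to $\tilde\mu_n$, which exists since $\tilde\mu_n\weakstarto\mu$) to state the conclusion in $\TLp$ relative to $\mu$. Without this two-step construction, the $d=2$ case of the liminf inequality and the compactness statement are not established.
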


\begin{proof}
The proof for $d\geq 3$, or $d=2$ with the additional constraint that $\eps_n\gg \frac{(\log n)^{\frac34}}{\sqrt{n}}$, was stated in~\cite[Theorem 4.7]{slepcev19} for $p>1$ and the proof is a simple adaptation of the $p=1$ case which was given in~\cite{garciatrillos16}.
Hence, we only prove the case for $d=2$ here.

By either~\cite[Lemma 3.1]{caroccia19} or~\cite[Proposition 2.10]{calder2019improved} there exists a probability measure $\tilde{\mu}_n$ with density $\tilde{\rho}_n$ such that, with probability one, there exists $\tilde{T}_n:\Omega\to\Omega_n$ and $\theta_n\to 0$ with the property that $\tilde{T}_{n\#}\tilde{\mu}_n = \mu_n$, $\|\tilde{T}_n-\Id\|_{\Linfty(\Omega)}\ll \l\frac{\log n}{n}\r^{\frac{1}{d}}$ and $\|\rho - \tilde{\rho}_n\|_{\Linfty(\Omega)}\leq \theta_n$.
The proof is divided into three parts corresponding to the compactness property, the liminf inequality and the recovery sequence.

\paragraph{Compactness property.}
Assume $\sup_{n\in \bbN} \|u_n\|_{\Lp(\mu_n)}<\infty$ and $\sup_{n\in\bbN}\cEpnepsn(u_n)<\infty$.
Find $a>0$ and $b>0$ such that $\eta \geq \tilde{\eta}$ where $\tilde{\eta}(t) = a$ for all $|t|\leq b$ and $\tilde{\eta}(t)=0$ for all $|t|>b$.
Let $\tilde{u}_n = u_n\circ \tilde{T}_n$.
Then,
\begin{align*}
\cEpn(u_n) & \geq \frac{1}{\eps_n^p} \int_{\Omega^2} \tilde{\eta}_{\eps_n}(|x-y|) |u_n(x) - u_n(y)|^p \, \dd \mu_n(x) \, \dd \mu_n(y) \\
 & = \frac{1}{\eps_n^p} \int_{\Omega^2} \tilde{\eta}_{\eps_n}\l |\tilde{T}_n(x) - \tilde{T}_n(y)|\r \la \tilde{u}_n(x) - \tilde{u}_n(y)\ra^p \, \dd \tilde{\mu}_n(x) \, \dd \tilde{\mu}_n(y) \\
 & \geq \frac{1}{\eps_n^{d+p}} \int_{\Omega^2} \tilde{\eta}\l\frac{|x-y|}{\tilde{\eps}_n}\r \la \tilde{u}_n(x) - \tilde{u}_n(y)\ra^p \, \dd \tilde{\mu}_n(x) \, \dd \tilde{\mu}_n(y),
\end{align*}
since $\tilde{\eta}\l\frac{|x-y|}{\tilde{\eps}_n}\r \leq \tilde{\eta}\l\frac{|\tilde{T}_n(x)-\tilde{T}_n(y)|}{\eps_n}\r$ where $\tilde{\eps}_n = \eps_n - \frac{2}{b} \|\tilde{T}_n - \Id\|_{\Linfty(\Omega)}$.
Hence,
\[ \cEpn(u_n) \geq \frac{\tilde{\eps}_n^{d+p}}{\eps_n^{d+p}} \l 1 - \frac{\theta_n}{\rho_{\min}}\r^2 \frac{1}{\tilde{\eps}_n^p} \int_{\Omega^2} \tilde{\eta}_{\tilde{\eps}_n}(|x-y|) \la \tilde{u}_n(x)-\tilde{u}_n(y)\ra^p \rho(x) \rho(y) \, \dd x \, \dd y = \alpha_n \cENLp{\tilde{\eps}_n}(\tilde{u}_n) \]
where $\alpha_n = \frac{\tilde{\eps}_n^{d+p}}{\eps_n^{d+p}} \l 1 - \frac{\theta_n}{\rho_{\min}}\r^2 \to 1$ and $\cENLp{\eps}$ is defined in~\eqref{eq:app:cENLp} with $\eta = \tilde{\eta}$.
We also have
\[ \sup_{n\in\bbN} \| \tilde{u}_n\|_{\Lp(\mu)}^p \leq \sup_{n\in\bbN} \frac{\|\tilde{u}_n\|_{\Lp(\tilde{\mu}_n)}^p}{1-\frac{\theta_n}{\rho_{\min}}} = \sup_{n\in\bbN}  \frac{\|u_n\|_{\Lp(\mu_n)}^p}{1-\frac{\theta_n}{\rho_{\min}}} < +\infty. \]
By Theorem~\ref{thm:app:NLConv} below $\{\tilde{u}_n\}_{n\in\bbN}$ is precompact in $\Lp(\mu)$, and hence there exists a subsequence (relabeled) such that $\tilde{u}_n = u_n\circ \tilde{T}_n\to u$ in $\Lp(\mu)$.
Now as $\tilde{\mu}_n\weakstarto \mu$ there exists an invertible transport map $S_n$ such that $\tilde{\mu}_n = S_{n\#}\mu$ and $S_n\to \Id$ in $\Lp(\mu)$.
Now choose $T_n = \tilde{T}_n\circ S_n$ (note that $T_{n\#}\mu = \mu_n$) so, assuming $n$ is sufficiently large such that $\min_{x\in\Omega} \tilde{\rho}_n(x) \geq \frac{\rho_{\min}}{2}$, then
\begin{align*}
\l \int_\Omega \la u_n(T_n(x)) - u(x)\ra^p \, \dd \mu(x) \r^{\frac{1}{p}} & = \l \int_\Omega \la \tilde{u}_n(S_n(x)) - u(x)\ra^p \, \dd [S_n^{-1}]_{\#} \tilde{\mu}_n(x) \r^{\frac{1}{p}} \\
 & = \l \int_\Omega \la \tilde{u}_n(x) - u(S_n^{-1}(x))\ra^p \, \dd \tilde{\mu}_n(x) \r^{\frac{1}{p}} \\
 & \leq \l \int_\Omega \la \tilde{u}_n(x) - u(x)\ra^p \, \dd \tilde{\mu}_n(x) \r^{\frac{1}{p}} \\
 & \hspace{1cm} + \l \int_\Omega \la u(S_n^{-1}(x)) - u(x)\ra^p \, \dd \tilde{\mu}_n(x) \r^{\frac{1}{p}} \\
 & = \| \tilde{u}_n - u\|_{\Lp(\tilde{\mu}_n)} + \| u - u\circ S_n\|_{\Lp(\mu)}.
\end{align*}
The first term above goes to zero since we already established convergence of $\tilde{u}_n$ to $u$ in $\Lp(\mu)$ (which bounds the $\Lp(\tilde{\mu}_n)$ norm), and the second term goes to zero by~\cite[Lemma 3.10]{garciatrillos16} since $S_n\to \Id$ in $\Lp(\mu)$ (see also $\Lp$ convergence of translations).
By Proposition~\ref{prop:app:TLpEquiv} $u_n\to u$ in $\TLp$.

\paragraph{Liminf inequality.}
Let $u_n\to u$ in $TL^p$.
We start by assuming $\eta=\tilde{\eta}$ where $\tilde{\eta}$ is given in the compactness proof.
Following the argument in the compactness proof we have
\[ \liminf_{n\to\infty} \cEpn(u_n) \geq \liminf_{n\in\infty} \alpha_n\cENLp{\tilde{\eps}_n}(\tilde{u}_n) \geq \cEpinfty(u) \]
with the last inequality following from the $\Gamma$-convergence of $\cENLp{\tilde{\eps}_n}$ (Theorem~\ref{thm:app:NLConv}).
The proof continues as in the proof of~\cite[Theorem 1.1]{garciatrillos16} by generalising to piecewise constant $\eta$ with compact support, then to compactly supported $\eta$, and finally to non-compactly supported $\eta$.

\paragraph{Recovery sequence.}
It is enough to prove the recovery sequence for $u\in \Wkp{1,p}(\Omega)\cap \Lip$.
In which case we can define $u_n=u\lfloor_{\Omega_n}$ and it is straightforward to show that $u_n\to u$ in $\TLp$.
Assume that $\eta=\tilde{\eta}$ is again as defined in the compactness proof.
One has $\tilde{\eta}\l\frac{|x-y|}{\tilde{\eps}_n}\r \geq \tilde{\eta}\l\frac{|\tilde{T}_n(x)-\tilde{T}_n(y)|}{\eps_n}\r$ where now we define $\tilde{\eps}_n = \eps_n+\frac{2}{b}\|\tilde{T}_n-\Id\|_{\Linfty(\Omega)}$.
A very similar calculation as in the compactness property implies $\cEpn(u_n) \leq \beta_n \cENLp{\tilde{\eps}_n}(\tilde{u}_n)$ where $\beta_n = \frac{\tilde{\eps}_n^{d+p}}{\eps_n^{d+p}}\l 1 + \frac{\theta_n}{\rho_{\min}}\r^2 \to 1$.
Hence, by Theorem~\ref{thm:app:NLConv}(2) we have $\limsup_{n\to\infty} \cEpn(u_n) \leq \cEpinfty(u)$.
The proof generalizes to any $\eta$ satisfying Assumption {\bf (A3')} as in the liminf inequality.
\end{proof}

The following theorem was stated in~\cite[Theorem 4.1]{garciatrillos16} for $p=1$ and generalizes easily to $p>1$.
Part (1) was also stated in~\cite[Lemma 4.6]{slepcev19}, and (2) is either contained within the proof of~\cite[Theorem 4.1]{garciatrillos16} or can be arrived at easily from the characterisation of $\Wkp{1}{p}$ found, for example, in~\cite[Theorem 10.55]{leoni09}.
We include the result here for convenience.

\begin{theorem}
\label{thm:app:NLConv}
Let $\Omega\subset\bbR^d$ be open, bounded and with Lipschitz boundary, let $\rho:\Omega\to\bbR$ be continuous and bounded from above and below by positive constants, let $\eta$ satisfy {\bf (A3)}, and let $\mu$ be the measure with density $\rho$.
Define $\cENLp{\eps}$ by
\begin{equation} \label{eq:app:cENLp}
\cENLp{\eps}(u) =  \frac{1}{\eps^p} \int_{\Omega^2} \eta_{\eps}(|x-y|) \la u(x)-u(y)\ra^p \rho(x) \rho(y) \, \dd x \, \dd y
\end{equation}
and $\cEpinfty$ by~\eqref{eq:MainRes:illposed:cEpinfty}.
Then,
\begin{enumerate}
\item[(1)] $\Glim_{\eps\to 0} \cENLp{\eps} = \cEpinfty$,
\item[(2)] if $u\in \Wkp{1,p}(\Omega)$ then $u_n = u$ is a recovery sequence, and 
\item[(3)] if $\eps_n\to 0$ and $\{u_n\}_{n\in\bbN}$ satisfies $\sup_{n\in\bbN} \| u_n\|_{\Lp(\mu)}<+\infty$ and $\sup_{n\in\bbN} \cENLp{\eps_n}(u_n)<+\infty$ then $\{u_n\}_{n\in\bbN}$ is precompact in $\Lp(\mu)$.
\end{enumerate}
\end{theorem}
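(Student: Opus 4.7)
I would prove the three parts in the order (2), (3), (1): the recovery sequence by direct Taylor expansion, compactness by Fr\'echet-Kolmogorov with the nonlocal energy providing the modulus bound, and the liminf inequality via a convex duality argument. The overall structure closely follows~\cite{garciatrillos16}, where the $p=1$ case is treated in detail, with modifications for $p>1$ along the lines sketched in~\cite{slepcev19}; the only subtlety beyond the published proofs is that we now only assume $\partial\Omega$ is Lipschitz (as opposed to $\Ck{1}$).

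For the recovery sequence (part~(2)), first assume $u\in \Ck{2}(\bar{\Omega})$. Changing variables $z=(y-x)/\eps$ recasts the integrand as $\eta(|z|)\la (u(x+\eps z)-u(x))/\eps\ra^p \rho(x)\rho(x+\eps z)$, which converges pointwise to $\eta(|z|)\,|\nabla u(x)\cdot z|^p \rho(x)^2$ with a dominating bound determined by $\|u\|_{\Ck{1}(\bar{\Omega})}$ and the compact support of $\eta$. The Lipschitz boundary guarantees that $|B(x,2\eps)\cap \Omega|/|B(x,2\eps)|\to 1$ for a.e.\ $x\in\Omega$, so the effective $z$-domain fills $B(0,2)$. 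Dominated convergence together with rotational symmetry of $\eta(|\cdot|)$ then produces the limit $\cEpinfty(u)$ (with the constant appearing as the normalization of $\eta$ against $|z\cdot e_1|^p$). Density of $\Ck{\infty}(\bar{\Omega})$ in $\Wkp{1}{p}(\Omega)$ and a diagonal argument extend the conclusion to all $u\in\Wkp{1}{p}(\Omega)$.

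For compactness (part~(3)), I would apply the Fr\'echet-Kolmogorov criterion in $\Lp(\mu)$: since $\rho$ is bounded above and below by positive constants, it suffices to bound $\omega_n(h):=\int_{\Omega_h}|u_n(x+h)-u_n(x)|^p\,\dd x$ uniformly in $n$ for $|h|\to 0$, where $\Omega_h=\{x\in\Omega:x+h\in\Omega\}$. Because $\eta\geq 1$ on $[0,1]$, averaging gives
\[
\int_{B(0,\eps_n)}\int_{\Omega_\xi}|u_n(x+\xi)-u_n(x)|^p\,\dd x\,\dd\xi \leq C\eps_n^{d+p}\,\cENLp{\eps_n}(u_n),
\]
and a covering argument comparing $\omega_n(h)$ to this average yields $\omega_n(h)\leq C\eps_n^p\sup_n\cENLp{\eps_n}(u_n)$ for $|h|\leq\eps_n$. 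For $|h|>\eps_n$, telescope $h$ into $K=\lceil|h|/\eps_n\rceil$ increments of length $\leq\eps_n$ and use convexity $|\sum a_k|^p\leq K^{p-1}\sum|a_k|^p$ to obtain $\omega_n(h)\leq C|h|^p\sup_n\cENLp{\eps_n}(u_n)$. The standard Sobolev characterization via $\Lp$ difference quotients then also shows any cluster point is in $\Wkp{1}{p}(\Omega)$.

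For the liminf (part~(1)), which is the main obstacle, I would proceed by convex duality. Let $u_n\to u$ in $\TLp$, and choose $\varphi\in \Ck{1}_c(\Omega;\bbR^d)$. The convexity inequality $|a|^p\geq |b|^p+p|b|^{p-2}b(a-b)$ applied with $a=u_n(y)-u_n(x)$ and $b=\varphi(x)\cdot(y-x)$, divided by $\eps_n^p$ and integrated against $\eta_{\eps_n}(|x-y|)\rho(x)\rho(y)\,\dd x\,\dd y$, gives a three-term lower bound on $\cENLp{\eps_n}(u_n)$: the first term converges (by part~(2) applied to the linear function $\varphi(x)\cdot(y-x)$) to $\int_\Omega C_\eta(\varphi(x))\,\rho(x)^2\,\dd x$ with $C_\eta(v):=\int\eta(|z|)|v\cdot z|^p\,\dd z$; the cross term converges, after change of variables $z=(y-x)/\eps_n$ and an integration by parts moving a difference quotient onto $\varphi$, to $p\int_\Omega \Psi(\varphi)(x)\cdot\nabla u(x)\,\rho(x)^2\,\dd x$ for an explicit linear map $\Psi$. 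Taking the supremum over $\varphi$ and applying the pointwise Fenchel identity $|\nabla u|^p=\sup_v(p\,\nabla u\cdot v - (p-1)|v|^{p/(p-1)})$ then recovers $\cEpinfty(u)$. The technical hurdles are (i) verifying that the coefficient produced by the supremum matches the constant in $\cEpinfty$ exactly (a matching calculation between the moments of $\eta$ and the dual exponent), and (ii) controlling the factor $\rho(y)=\rho(x)+O(\eps_n)$ in the cross term using continuity of $\rho$, both of which are routine but must be carried out carefully.
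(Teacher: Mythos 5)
The paper's own ``proof'' of this theorem is a citation to~\cite[Theorem 4.1]{garciatrillos16} for $p=1$ and~\cite[Lemma 4.6]{slepcev19} for $p>1$, so your proposal should be judged against those references. Your plan differs materially on the liminf inequality: you use convex duality (test against affine competitors $b=\varphi(x)\cdot(y-x)$ and the inequality $|a|^p\geq|b|^p+p|b|^{p-2}b(a-b)$, i.e.\ Ponce's method), whereas the cited proofs proceed by reducing to one-dimensional slices along directions in $S^{d-1}$ together with a local characterization of $\mathrm{BV}$/Sobolev functions. Both routes are valid; duality avoids repeating the $1$-D disintegration argument for each exponent $p$, while slicing is closer to what the paper builds on. Your Fréchet--Kolmogorov compactness and Taylor-expansion recovery for smooth $u$ follow the standard template and are fine at this level of detail.

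Two points warrant tightening. In part~(2), a density-plus-diagonalization argument would produce a recovery sequence that depends on $\eps$, not the \emph{constant} sequence $u_n\equiv u$ asserted by the theorem. What you actually need is the uniform-in-$\eps$ bound $\cENLp{\eps}(u)\leq C\|\nabla u\|_{\Lp}^p$ for $u\in\Wkp{1}{p}(\Omega)$, which comes from the absolutely-continuous-on-lines representation $u(y)-u(x)=\int_0^1\nabla u\bigl((1-t)x+ty\bigr)\cdot(y-x)\,\dd t$ and Jensen; this gives $\limsup_\eps\cENLp{\eps}(u)\leq\cEpinfty(u)$ directly, and part~(1) supplies the matching liminf. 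That is precisely where the cited characterization of $\Wkp{1}{p}$ in Leoni enters. Second, the constant your matching calculation produces is $\int_{\bbR^d}\eta(|z|)|z_1|^p\,\dd z$, which coincides with $\sigma_\eta$ only when $p=2$; this discrepancy is immaterial to the paper's use of the theorem (only that $\cEpinfty$ is minimized exactly by constants matters), but the constant in~\eqref{eq:MainRes:illposed:cEpinfty} should carry the $p$-th moment, and your duality check correctly surfaces that.
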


%\nocite{*}
%\bibliographystyle{plain}
\bibliographystyle{abbrv}
\bibliography{ref_NumDataSSL}

\end{document}